\renewcommand{\theequation}{\thesection.\arabic{equation}}
\renewcommand\thesection{\arabic{section}}
\newtheorem{theorem}{Theorem}[section]
\newtheorem{remark}{\bf Remark}[section]
\newtheorem{lemma}{\bf Lemma}[section]
\newtheorem{definition}{\bf Definition}[section]
\newtheorem{example}{\bf Example}[section]
\newtheorem{proposition}[theorem]{Proposition}
\newcommand{\nn}{\nonumber}
\newcommand{\D}{\mathrm{div}}
\def\bv{{\bf v}} 
\def\u{{\bf u}}
\def\e{{\bf e}} 
\def\w{{\bf w}}
\def\bg{{\bf g}}
\def\f{{\bf f}}
\def\s{{\bf s}}
\def\n{{\bf n}} 
\def\S{{\bf S}}
\def\q{{\bf q}}
\def\bzeta{\bb {\zeta}}
\def\bvarphi{\bb {\varphi}}
\def\bxi{\bb {\xi}}
\def\bomega{\bb{\omega}}
\def\DD{{\bf D}}
 \def\refe#1{(\ref{#1})}
\def\x{{\bf x}} 
\def\X{{\bf X}}
\def \endproof{\vrule height8pt width 5pt depth 0pt}
\def \bb#1{\setbox0=\hbox{$#1$}
           \kern-.0em\copy0\kern-\wd0
            \kern.0em\copy0\kern-\wd0
           \kern-.0em\raise.04em\box0}
\def\e{\bb{\eta}} 
\def\sig{\bb{\sigma}}
\numberwithin{equation}{section}
\numberwithin{definition}{section}
\numberwithin{theorem}{section}
\let\OLDthebibliography\thebibliography
\renewcommand\thebibliography[1]{
  \OLDthebibliography{#1}
  \setlength{\parskip}{5pt}
  \setlength{\itemsep}{0pt plus 0.3ex}
}
\begin{document}

\date{}

\title{Optimal $L^2$ error analysis of a loosely coupled finite element scheme for thin-structure interactions
} 

\author{
\setcounter{footnote}{0}
Buyang Li  
\footnote{
Department of Applied Mathematics, The Hong Kong Polytechnic University, Hung Hom, Hong Kong. E-mail address: buyang.li@polyu.edu.hk and yu-pei.xie@connect.polyu.hk. 
  }
\,\,\, 
Weiwei Sun
\footnote{Advanced Institute of Natural Sciences, Beijing Normal University, Zhuhai,  519087, P.R. China. E-mail address: maweiw@uic.edu.cn. 
  }
\footnote{
   Guangdong Provincial Key Laboratory of Interdisciplinary Research and Application for Data Science, BNU-HKBU United International College, Zhuhai, 519087, P.R.China.} 
\,\,\, Yupei Xie \footnotemark[1] 
\,\,\, 
and
\,\, 
Wenshan Yu \footnotemark[3] 
   \footnote{Hong Kong Baptist University, Kowloon Tong, Hong Kong. E-mail address: yuwenshan@uic.edu.cn
} 
}
\maketitle

\begin{abstract} 
Finite element methods and kinematically coupled schemes that decouple the fluid velocity and structure displacement have been extensively studied for incompressible fluid-structure interaction (FSI) over the  past decade. While these methods are known to be stable and easy to implement, optimal error analysis has remained challenging. Previous work has primarily relied on the classical elliptic projection technique, which is only suitable for parabolic problems and does not lead to optimal convergence of numerical solutions to the FSI problems in the standard $L^2$ norm. In this article, we propose a new stable fully discrete kinematically coupled scheme for incompressible FSI thin-structure model and establish a new approach for the numerical analysis of FSI problems in terms of a newly introduced coupled non-stationary Ritz projection, which allows us to prove the optimal-order convergence of the proposed method in the $L^2$ norm. The methodology presented in this article is also applicable to numerous other FSI models and serves as a fundamental tool for advancing research in this field. \\ 

\noindent{\bf Keywords:} Fluid-structure interaction, finite element method, kinematically coupled schemes, energy stability, error estimates, coupled non-stationary Ritz projection
\end{abstract} 




\setlength\abovedisplayskip{4pt}
\setlength\belowdisplayskip{4pt}

\section{Introduction} 
\setcounter{equation}{0}

There has been increasing interest in studying fluid-structure interaction due to its diverse applications in many areas \cite{Dowell-FSI-reviewmodel, Formaggia-Cardio-book-2009, Hou-Wang-Layton-reviewcomp, JuLIU-2021-Notices-AMS,FNobile-PhDThesis}. Numerical simulations are crucial in this field, and over the past two decades, numerous efforts have been devoted to developing efficient numerical algorithms and analysis methods.

This paper focus on a commonly-used academic model problem, 
 where an incompressible fluid interacts with thin structure described by some lower-dimensional, linearly elastic model (for example, membrane, shell and etc).
This thin-structure interaction 
model is  described by the following equations 
\begin{empheq}[left=\empheqlbrace]{align}
\rho_f \partial_t \u - \D \,\sig(\u, p) & = 0, \qquad \qquad \mbox{in } (0,T) \times \Omega,
\nn \\  
\D \, \u & = 0 ,  \qquad \qquad \mbox{in } (0,T) \times \Omega,
 \label{f-e}  \\ 
 \u(0, \cdot) & = \u_0(x) , \qquad \mbox{on } \Omega,
 \nn
\end{empheq}
\begin{empheq}[left=\empheqlbrace]{align}
\rho_s \epsilon_s \partial_{tt} \, \e - {\cal L}_s \e &= - \sig(\u,p)\n , \, \, \qquad \mbox{in } (0,T) \times \Sigma,
\nn \\  
\e(0, x) &= \e_0(x),   \qquad \qquad \mbox{on }  \Sigma, 
 \label{s-e}  
 \\ 
 \partial_t \, \e(0, x) &= \u_0(x), \qquad  \qquad \mbox{on }  \Sigma
 \nn 
\end{empheq}
with the kinematic interface condition 
\begin{align} 
\partial_t \e = \u  \qquad \mbox{on } (0,T) \times \Sigma 
\label{bc} 
\end{align} 
and certain inflow and outflow conditions at $\Sigma_l$ and $\Sigma_r$; see Figure \ref{figure-1}. 
The unknown solutions in \eqref{f-e}--\eqref{bc} are fluid velocity $\u$, fluid pressure $p$ and structure displacement $\e$, and the following notations are used:
\begin{longtable}{p{4.5cm}p{9cm}}
	$\epsilon_s$:
	&The thickness of the structure.\\
	
	$\mu$:
	&The fluid viscosity.\\
	
	$\rho_f$:
	&The fluid density.\\
	
	$\rho_s$:
	&The structure density.\\
	
	$\n$:
	&The outward normal vector on $\partial \Omega$.\\
	
	$\sig(\u, p) = -pI + 2 \mu \DD(\u)$: 
	& The fluid stress tensor. \\
	
	$\DD(\u) = \frac{1}{2} ( \nabla \u + (\nabla \u)^T)$: 
	& The strain-rate tensor. \\	
	
	${\cal L}_s$: 
	&An elliptic differential operator on $\Sigma$, such as ${\cal L}_s= - I + \Delta_s$, where $\Delta_s$ is the Laplace-Beltrami operator on $\Sigma$. 
\end{longtable}

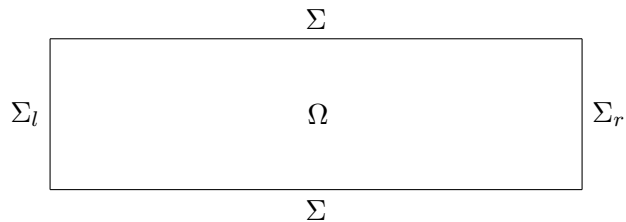
\begin{figure}[htp!]
\begin{center}
\begin{tikzpicture}
\draw[-,] (0,0) -- (7,0) ;
\draw[-,] (7,0) -- (7,2) ;
\draw[-, ] (7,2) -- (0,2) ;
\draw[-,] (0,2) -- (0,0) ;
\draw (3.8,1) node[left]{$\Omega$} ; 
\draw (3.5,2) node[above]{$\Sigma$} ; 
\draw (3.5,0) node[below]{$\Sigma$} ; 
\draw (0,1) node[left]{$\Sigma_l$} ; 
\draw (7,1) node[right]{$\Sigma_r$} ; 
\end{tikzpicture}
\end{center} \vspace{-10pt}
\caption{The computational domain in the thin-structure interaction problem}
\label{figure-1}
\end{figure}

In general, two strategies can be employed to construct numerical schemes for solving fluid-structure interaction problems. Monolithic algorithms solve a fully coupled system, which can be expensive for complex fluid-structure problems. Various studies have focused on the numerical simulation and analysis of monolithic algorithms, as can be found in \cite{Sun-Xu-2021-monolithic, JS-monolithic-lecturenotes, GKW-monolithic, PSun-2020_monolithic-simplified, JuLIU-2022-CMAME, LiuJ-JCP-2014,FNobile-PhDThesis}. 
Alternatively, 
the fluid and structure sub-problems can be solved separately by 
partitioned type schemes.
A strongly-coupled partitioned scheme often requires extra iterations 
for the sub-problems at each time step to obtain the solution which at convergence coincides with the monolithic one \cite{FNobile-PhDThesis, Fer-Ger-G-2007-ProjectionSemi-implicitScheme}, while 
the extra iterations are not needed in loosely coupled partitioned schemes.
However, the stability is a key issue 
for loosely coupled partitioned schemes,  
which may be hard to be ensured for highly added mass effect problems such as hemodynamics (e.g.\cite{AddedMass-NumericalInstability}). 
The development and study of stable loosely coupled partitioned schemes have been  an active area of research 
(e.g. 	\cite{Fer-B-2009-Stabilization-Nitsche, burman14, banks2014, gigante21, GGCC-2009}). 

Among those loosely coupled partitioned schemes, the kinematically coupled scheme is the most popular one due to its modularity, stability, and ease of implementation. The scheme was first studied in \cite{GGCC-2009} for the fluid-structure interaction problems and subsequently by numerous researchers \cite{BCGTQ,BM-2016SINUM, BukacT-2022-thin, LRH-nonNewtonian, Bukac2018SINUM}. However, the analysis of kinematically coupled schemes has been challenging due to the specific coupling of two distinct physical phenomena. In \cite{Fer-2013NumMath}, Fernandez proposed an incremental displacement-correction scheme, which proved to be stable, and the following energy-norm error estimate was established using piecewise polynomials of degree $k$ for both $\u_h^n$ and $\e_h^n$ in \refe{error-0}, i.e.,
\begin{align} 
 \| \u^n - \u_h^n \|_{L^2(\Omega)}
 + \Big ( \sum_{m=1}^n \tau \| \u^m - \u_h^m \|_f^2 \Big )^{\frac12} 
 +  \| \u^n - \u_h^n \|_{L^2(\Sigma)}  
 + \| \e^n - \e_h^n \|_s 
 \le C(\tau + h^k ) . 
 \label{error-0} 
\end{align} 
The above estimate is optimal only for the velocity in the weak $H^1$-norm (more precisely, $L^2(H^1)$-norm) and not optimal in $L^2$-norm. 
Several different schemes were investigated, and similar error estimates, such as those given in \cite{BM-2016SINUM, Bukac2018SINUM}, were provided. The kinematic coupling has been extended to other applications, such as composite structures and non-Newtonian flow \cite{BCM-composite-JCP-2015, LRH-nonNewtonian}, by many researchers. 
Recently, a fully discrete loosely coupled Robin-Robin scheme for thick structures was proposed in \cite{BDFG-2022}, where they showed that the error estimate in the same energy norm as in \refe{error-0} is in the order of $O(\sqrt{\tau} + h)$ for $k=1$. Additionally, a splitting scheme was proposed in \cite{AFL-2023} for the fluid-structure interaction problem with immersed thin-walled structures. The scheme was proved to be unconditionally stable, and a suboptimal $L^2$-norm error estimate was presented.
 
Optimal $L^2$-norm error estimates play a crucial role in both theoretical analysis of algorithms and development of novel algorithms for practical applications. However, to our knowledge, such results have not been established due to the lack of properly defined Ritz projections for fluid-structure interaction problems. This is in contrast to the error analysis of finite element methods for parabolic equations, where the Ritz projections have been well defined since the early work of Wheeler \cite{Whe}. For instance, for the heat equation  
$\partial_tu-\Delta u=f$, the Ritz projection is a finite element function $R_hu$ that satisfies the weak formulation:
\begin{align}
\int_\Omega \nabla (u - R_h u)\cdot \nabla v_h d x = 0 \quad\mbox{for all finite element functions $v_h$}.
\label{e-p-0}
\end{align}
With this projection $R_h$, the error of the finite element solution can be decomposed into two parts:
$$
u - u_h = (u -R_hu) + (R_hu - u_h). 
$$
In the analysis of the second part, the pollution from the approximation of the diffusion term is not involved, thus enabling the establishment of an optimal-order error estimate for $\| R_h u - u_h \|_{L^2(\Omega)}$. The optimal estimate for $\|u - u_h \|_{L^2(\Omega)}$ can be derived from the fact that the projection error $\|u -R_hu \|_{L^2(\Omega)}$ is also of optimal order. However, formulating and determining optimal $L^2$-norm error estimates for a suitably defined Ritz projection in fluid-structure interaction systems remains a challenge. The standard elliptic Ritz projection for the Stokes equations, while widely employed for obtaining error estimates in the energy norm, no longer produces optimal $L^2$-norm error estimates for such fluid-structure interaction systems; see \cite{AFL-2023,BM-2016SINUM, Fer-2013NumMath, PSun-2020_monolithic-simplified,Bukac2018SINUM}.

In this article, we propose a new kinematically coupled scheme which decouples $(\u,p)$ and $\e$ for solving the thin-structure interaction problem, and demonstrate its unconditional stability for long-time computation. More importantly, we establish an optimal $L^2$-norm error estimate for the proposed method, i.e., 
\begin{align} 
 \| \u^n - \u_h^n \|_{L^2(\Omega)}
+  \| \u^n - \u_h^n \|_{L^2(\Sigma)}  
 + \| \e^n - \e_h^n \|_{L^2(\Sigma)} 
 \le C( \tau + h^{k+1} ) \, ,
\end{align} 
by developing a new framework for the numerical analysis of fluid-structure interaction problems in terms of a newly introduced coupled non-stationary Ritz projection, which is defined as a triple of finite element functions $(R_h\u, R_h p, R_h\e) $ satisfying a weak formulation plus a constraint condition $(R_h \u)|_\Sigma = \partial_t R_h \e$ on $\Sigma\times[0,T]$. This is equivalent to solving an evolution equation of $R_h \e$ under some initial condition $R_h \e(0)$. 
Moreover, the dual problem of the non-stationary Ritz projection, required in the optimal $L^2$-norm error estimates for the fluid-structure interaction problem, is a backward initial-boundary value problem
	\begin{subequations}\label{dual-phi-eq-0}
		\begin{align}
			- \mathcal{L}_s {\bb \phi} + {\bb \phi} &= \partial_t \sig ({\bb \phi}, q) \n +\f && \text{on}\,\,\, \Sigma\times [0,T)
			 &&\mbox{\rm(the boundary condition)}
			\\
			-\nabla \cdot \sigma ({\bb \phi}, q) + {\bb \phi} &=  0 && \text{in}\,\,\, \Omega\times [0,T) \\
			\nabla \cdot {\bb \phi} &=  0 && \text{in}\,\,\, \Omega \times [0,T) \\
			\sig ({\bb \phi}, q) \n &=0 &&\mbox{at}\,\,\, t=T &&\mbox{\rm(the initial condition)} .
		\end{align} 
	\end{subequations}
which turns out to be equivalent to a backward evolution equation of ${\bb \xi}=\sig ({\bb \phi}, q) \n$, i.e., 
	\begin{equation}\label{xi-strong-eq-0} 
		- \mathcal{L}_s \mathcal{N} {\bb \xi} + \mathcal{N} {\bb \xi} - \partial_t \mathcal{}
		{\bb \xi} = \f \,\,\, \text{on}\,\,\, \Sigma\times[0,T), \,\,\,\mbox{with initial condition}\,\,\, {\bb \xi} (T) = 0 , 
	\end{equation}
where $\mathcal{N}: H^{-\frac12}(\Sigma)^d\rightarrow H^{\frac12}(\Sigma)^d$ is the Neumann-to-Dirichlet map associated to the Stokes equations. By choosing a well-designed initial value $R_h\e(0)$ and utilizing the regularity properties of the dual problem \eqref{dual-phi-eq-0}, which are shown by analyzing the equivalent formulation in \eqref{xi-strong-eq-0}, we are able to establish optimal $L^2$ error estimates for the non-stationary Ritz projection and, subsequently, optimal $L^2$-norm error estimates for the finite element solutions of the thin-structure interaction problem. 


The rest of this article is organized as follows. In Section 2, we introduce a kinematically coupled scheme and present our main theoretical results on the unconditional stability and optimal $L^2$-norm error estimates of the scheme. We focus on a first-order kinematically coupled time-stepping method and the class of $H^1$-conforming inf-sup stable finite element spaces, including the classical Taylor--Hood and MINI elements. 
In Section 3, we introduce a new non-stationary coupled Ritz projection and present the corresponding projection error estimates (with its proof deferred to Section 4). Then we establish unconditionally stability and optimal $L^2$-norm error estimates for the fully discrete finite element solutions by utilizing the error estimates for the non-stationary coupled Ritz projection. 
Section 4 is devoted to the proof of the error estimates of the non-stationary coupled Ritz projection. We present a well-designed initial value of the projection and the corresponding error estimates based on duality arguments on the thin solid structure. 
In Section 5, we provide three numerical examples to support the theoretical analysis presented in this article. The first example illustrates the optimal $L^2$-norm convergence of the proposed fully discrete kinematically coupled scheme. The second example demonstrates the simulation of certain physical features, which are consistent with previous works. The third example is the 3D simulation of common cardiac arteries in hemodynamics.


\section{Notations, assumptions and main results} 
\setcounter{equation}{0}
In this section, we propose a fully discrete stabilized kinematically coupled FEM for the fluid-structure problem \refe{f-e}--\refe{bc}, as well as the main theoretical results of unconditional stability and optimal-order convergence in the $L^2$ norm. 

\subsection{Notation and weak formulation}
Some standard notations and operators are defined below. For any two function $u$, $v \in L^{2}(\Omega)$, we denote the inner products and norms of $L^2(\Omega)$ and $L^2(\Sigma)$ by
\begin{align}
(u,v)  &= \int_{\Omega} u(\x) v(\x)\, {\mathrm{d}} \x,
\qquad\, \left \|u\right \|^2:= (u,u) , 
\nn\\
(w,  \xi)_\Sigma  &= \int_{\Sigma} w(\x) \xi(\x)\, {\mathrm{d}} \x,
\qquad \| w\|_\Sigma^2 := (w, w)_\Sigma.
\nn 
\end{align} 
We assume that $\Omega \subset \mathbb{R}^d$ ($d=2,3$) is a bounded domain with $\partial \Omega = \Sigma_l \cup \Sigma_r \cup 
\Sigma$, where $\Sigma$ denotes the fluid-structure interface,  
$\Sigma_l$ and  $\Sigma_r$ are two disks (or lines in 2-dimensional case) denoting the inflow and outflow boundary and $\Sigma_r = \{ (x,y,z+ L) : (x,y,z) \in \Sigma _l \, \mbox{ for some } L>0\}$. 

For the simplicity of analysis, we consider the problem with the periodic boundary condition on $\Sigma_l$ and $\Sigma_r$, and assume that the extended domains $\Omega_\infty$ and $\Sigma_\infty$ are smooth, where  
\begin{align*}
	\Omega_\infty&:=\{(x,y,z):\exists k\in\mathbb{Z} \text{ such that }(x,y,z+Lk)\in \Omega\cup\Sigma_l \} , \\
	\Sigma_\infty&:=\{(x,y,z):\exists k\in\mathbb{Z}\text{ such that }(x,y,z+Lk)\in \overline\Sigma\} . 
\end{align*} 
The structure is assumed to be a linear thin-solid (e.g., string in two-dimensional model and membrane for three-dimensional model). Nonetheless, the algorithm presented in this paper is also applicable to problems with more general boundary conditions and domains. 

We say a function $f$ defined in $\Omega_\infty$ is periodic if 
\begin{align*}
	f(x,y,z)=f(x,y,z+kL) \quad \forall (x,y,z)\in \Omega\cup\Sigma_l \quad\forall\, k\in\mathbb{Z} . 
\end{align*}
The space of periodic smooth functions on $\Omega_\infty$ is denoted as $C^\infty(\Omega_\infty)$.
The periodic Sobolev spaces $H^s(\Omega)$ and $H^s(\Sigma)$, with $s\ge 0$, are defined as
\begin{align*}
	H^s(\Omega)&:=\text{The closure of $C^\infty(\Omega_\infty)$ under the conventional norm of $H^s(\Omega)$}  , \\ 
	H^s(\Sigma)&:=\text{The closure of $C^\infty(\Sigma_\infty)$ under the conventional norm of $H^s(\Sigma)$}  . 
\end{align*} 
which are equivalent to the Sobolev spaces by considering $\Omega$ and $\Sigma$ as tori in the $z$ direction. 
The dual spaces of $H^s(\Omega)$ and $H^s(\Sigma)$ are denoted by $H^{-s}(\Omega)$ and $H^{-s}(\Sigma)$, respectively. 

We define the following function spaces associated to velocity, pressure and thin structure, respectively: 
\begin{align} 
& {\bf X}(\Omega): = H^1(\Omega)^d 
,\quad 
Q(\Omega): = L^2(\Omega) ,\quad 
\S(\Sigma): = H^1(\Sigma)^d . 
\nn 
\end{align} 
Correspondingly, we define the following bilinear forms:
\begin{align} 
a_f(\u, \bv): &= 2 \mu ( \DD(\u), \DD(\bv)) &&\mbox{for}\,\,\, \u,\bv\in {\bf X}(\Omega), \\
b(p, \bv): &= (p, \, \nabla \cdot \bv)  &&\mbox{for}\,\,\, \bv\in {\bf X}(\Omega)\,\,\mbox{and}\,\, p\in Q(\Omega) , \\
\nn 
a_s(\e, \w): &= ( {-\cal L}_s \e, \w )_\Sigma &&\mbox{for}\,\,\, \e,\w\in \S(\Sigma) .
\nn 
\end{align} 

We assume that ${\cal L}_s$ is a second-order differential operator on $\Sigma$ satisfying the following conditions: 
\begin{align}\label{Ls-second-order}
	&\|\mathcal{L}_s\w\|_{H^{k}(\Sigma)} \leq C\|\w\|_{H^{k+2}(\Sigma)} 
	&&\forall\, \w\in H^{k}(\Sigma)^d,\,\,\, \forall k\geq -1,\,\, k\in \mathbb{R}, \\
\label{as-ass}
	&a_s(\e,\w)=a_s(\w,\e)\quad\mbox{and}\quad a_s(\e,\e)\geq 0
	&& \forall \e\in H^1(\Sigma)^d, \\
	&\|\e\|_s+\|\e\|_\Sigma \sim \|\e\|_{H^1(\Sigma)} &&\mbox{for}\,\,\, \| \e \|_s: = \sqrt{a_s(\e, \e)} . \label{Korn-ineq}
\end{align}  
In addition, we denote $\| \u \|_f := \sqrt{( \DD(\u), \DD(\u))}$ and mention that the following norm equivalence holds (according to Korn's inequality):
\begin{equation*}
	\|\u\|_f+\|\u\|\sim \|\u\|_{H^1(\Omega)} . 
\end{equation*}
For the simplicity of notations, we denote by $\|\bv\|_{L^p X}$ the Bochner norm (or semi-norm) defined by 
\begin{align*}
	&\|\bv\|_{L^p X}:=\begin{cases}
		&\left(\int_{t=0}^{t=T}\|\bv(t,\cdot)\|^p_X dt\right)^{1/p}\quad 1\leq p<\infty\\
		&\sup_{t\in [0,T]}\|\bv(t,\cdot)\|_X \quad \qquad p=\infty , 
	\end{cases} 
	\qquad 
\end{align*}
where $\|\cdot\|_{X}$ is any norm or semi-norm in space, such as $\|\cdot\|_{f}$, $\|\cdot\|_{s}$ or $\|\cdot\|_{L^2(\Sigma)}$. 
The following conventional notations will be used: 
$ \| \cdot\|_{X}: = \| \cdot \|_{X(\Omega)}$,  
$ \| \cdot \|: =  \| \cdot \|_{L^2(\Omega)}$, $ \| \cdot \|_{\Sigma}: =  \| \cdot \|_{L^2(\Sigma)}$ and {$\|\cdot\|_{H_f}:=\|\cdot\|_f$, $\|\cdot\|_{H_s}:=\|\cdot\|_s$}.  

For smooth solutions of \eqref{f-e}--\eqref{bc}, one can verify that (via integration by parts) the following equations hold for all test functions $(\bv, q, \w) \in {\bf X} \times Q \times \S$ with $\bv|_\Sigma = \w$:
\begin{align} 
&\partial_t \e = \u  \qquad \mbox{on }  \Sigma, \nn\\
&\rho_f (\partial_t \u, \bv)  + a_f(\u, \bv) - b(p, \bv) + b(q, \u) + \rho_s \epsilon_s ( \partial_{tt} \e, \, \w )_\Sigma 
+ a_s(\e, \w) = 0 .
\label{weak} 
\end{align}

\subsection{Regularity assumptions}\label{section:regularity}

To establish the optimal error estimates for the finite element solutions to the thin-structure interaction problem, we need to use the following regularity results. 

\begin{itemize}
	\item 
	We assume that the domain $\Omega$ is smooth so that the the solution $(\u, p, \e)$ of the fluid-structure interaction problem \refe{f-e}--\refe{bc} is sufficiently smooth. 
	
	\item
	The weak solution $({\bb \omega},\lambda)\in  H^1(\Omega)^d\times L^2(\Omega)$ of the Stokes equations
	\begin{align*}
		-\nabla\cdot\sig({\bb \omega},\lambda)+{\bb \omega}&={\bf f} \\
		 \nabla\cdot{\bb \omega}&=0 
	\end{align*}
	has the following regularity estimates:
	\begin{align}
		&\|{\bb \omega}\|_{H^{k+3/2}}+\|\lambda\|_{H^{k+1/2}}\leq C\|{\bf f}\|_{H^{k-1/2}}+\|\sig({\bb \omega},\lambda)\cdot\n\|_{H^{k}(\Sigma)} && \mbox{for}\,\,\, k\geq -1/2, &&k\in \mathbb{R} , \label{reg-ass1}\\
		&\|{\bb \omega}\|_{H^{k+1/2}}+{\|\lambda-\bar{\lambda}\|_{H^{k-1/2}}}\leq C\|{\bf f}\|_{H^{k-3/2}}+\|{\bb \omega}\|_{H^{k}(\Sigma)} && \mbox{for}\,\,\, k\geq 1/2, &&k\in \mathbb{R} , \label{reg-ass2}
	\end{align}
	where $\bar{\lambda}:=\frac{1}{|\Omega|}\int_\Omega \lambda$ is the mean value of $\lambda$ over $\Omega$. 
	The estimates in \eqref{reg-ass1} and \eqref{reg-ass2} correspond to the Neumann and Dirichlet boundary conditions, respectively; see \cite[Theorem IV.6.1]{Galdi} for a proof of \eqref{reg-ass2} in smooth domains, with a similar approach as in \cite[Chapter IV]{Galdi} one can prove \eqref{reg-ass1}. We also refer to \cite[Theorem 4.15]{Schwab} for a proof of \eqref{reg-ass1} in the case of polygonal domain. 
\end{itemize}
\begin{itemize}
	\item 
	We assume that operator $\mathcal{L}_s$ possesses the following elliptic regularity: The weak solution ${\bb \xi}\in H^1(\Sigma)^d$ of the equation (in the weak formulation) 
	\begin{align*}
		a_s({\bb \xi},\w)+({\bb \xi},\w)_\Sigma=( {\bf g}, \w )_\Sigma\quad \forall \w\in H^1(\Sigma)^d , 
	\end{align*}
	has the following regularity estimate:
	\begin{align}\label{reg-ass3}
		\|{\bb \xi}\|_{H^{2+k}(\Sigma)}\leq C\|{\bf g}\|_{H^k(\Sigma)}\quad \mbox{for}\,\,\, k\geq -1,\,\, k\in \mathbb{R}. 
	\end{align}
\end{itemize}

\subsection{Assumptions on the finite element spaces}\label{FE-space}
Let  $\mathcal{T }_h$ denote a quasi-uniform partition on 
$\Omega$ with $\overline{\Omega} = \bigcup_{K \in \mathcal{T}_h} K$. Each $K$ is a curvilinear polyhedron/polygon with ${\rm diam}(K)\leq h$. 
All of the boundary faces on 
	$\Sigma$ consist of a partition $\mathcal{T}_h (\Sigma)$, $\Sigma =
	\bigcup_{D \in \mathcal{T}_h (\Sigma)} D$,  
	all of the boundary faces on ${\Sigma_l} $ or $\Sigma_r$
	consist of a partition of $\Sigma_l$ or $\Sigma_r$,  respectively, and these two partitions coincide after shifting $L$ in $z$-direction.	
To approximate the weak form \eqref{weak} by finite element method, we assume that there are finite element spaces $(\X^r_h, \S^r_h, Q^{r-1}_h)$ on $\mathcal{T }_h$ (where $r\geq 1$) with the following properties. 
\vskip0.1in 

\begin{itemize}
	\item {\bf (A1)} $\X^r_h \subseteq \X$, $\S^r_h \subseteq \S$ and $\mathbb{R}\subseteq Q^{r-1}_h \subseteq Q$, with $\S^r_h = \{ \bv_h |_{\Sigma} : \bv_h \in \X^r_h  \}$.
	
	
	
	\item {\bf (A2)} For $\X_h^r$ and $Q_h^{r-1}$, the following local inverse estimate holds on each $K\in \mathcal{T}_h$ for $ 0\leq l\leq k,1\leq p,q\leq \infty$:
	\begin{align}\label{inverse-Xh-Qh-local}
		&\| \bv_h \|_{W^{k, p} (K)} \leq C h^{- (k - l) + (d/ p - d/ q)} \| \bv_h
			\|_{W^{l, q} (K)} \quad \forall \bv_h \in \X^r_h \mbox{ or } 
			Q^{r-1}_h,
	\end{align}
	For $\S^r_h$, the following global inverse estimate holds: 
	\begin{equation}\label{inverse-Sh}
		\|\w_h \|_{H^s(\Sigma)} \leq C h^{k-s} \| \w_h \|_{H^{k}(\Sigma)} \quad \forall
		\w_h \in \S^r_h;\;\forall\, k,s\in \mathbb{R}\text{ with $0\leq k\leq s\leq 1$} .
	\end{equation}
	
	\item {\bf (A3)} There are interpolation/projection operators $I^X_h : \X \rightarrow \X^r_h$ and $I_h^Q : Q \rightarrow Q^{r-1}_h$ which have the following local $L^p$ approximation properties on each $K\in \mathcal{T}_h$, for all $1\leq p\leq \infty$:
	\begin{subequations}\label{interpolation-error-Xh-Qh}
		\begin{align}
			&\| I_h^X \u - \u \|_{L^p(K)}+h\| I_h^X \u - \u \|_{W^{1,p}(K)} \leq C h^{k + 1} \| \u \|_{W^{k + 1,p}(\Delta_K)}  && \forall\, 0\leq k \leq r , \\
			&\| I_h^Q p - p \|_{L^p(K)} \leq C h^{k + 1} \| p \|_{W^{k +1,p} (\Delta_K)} && \forall\, 0 \leq k \leq r - 1,
		\end{align}
	\end{subequations}
	where $\Delta_K$ is the macro element including all the elements which have a common vertex with $K$. And there is an interpolation/projection operator $I_h^S : \S \rightarrow \S^r_h$ satisfying $I_h^X \u |_{\Sigma} = I_h^S (\u |_{\Sigma})$ for all $\u\in \X$ with $\u|_\Sigma\in \S$. Moreover, we require the following optimal order error estimate
	\begin{equation}
		\| I_h^S \w - \w \|_{\Sigma} + h \|I_h^S \w - \w\|_{H^1(\Sigma)} \leq C h^{k + 1} \| \w \|_{H^{k + 1}_h (\Sigma)} \quad \forall 0 \leq
		k \leq r,
	\end{equation} 
	where $\|\cdot\|_{H^{k+1}_h(\Sigma)}$ is the piecewise $H^{k+1}$-norm associated with partition $\mathcal{T}_h(\Sigma)$. We will abuse notation and use $I_h$ to denote one of the operators $I_h^X$, $ I_h^S$ and $ I_h^Q$ when there is no confusion.
	\item {\bf (A4)} Let $\mathring \X^r_h := \{ \bv_h \in \X^r_h : \bv_h |_{\Sigma} = 0 \}$ and $Q_{h,0}^{r-1}:=\{q_h\in Q_h^{r-1}:q_h\in L^2_0(\Omega)\}$. The following inf-sup condition holds:
	\begin{equation}\label{inf-sup-condition0}
		\| q_h \| {\leq C \sup_{0 \neq \bv_h \in \mathring\X^r_h}}  \frac{({\rm div}\, \bv_h,
			q_h)}{\| \bv_h \|_{H^1 }} \quad
		\forall q_h \in Q^{r-1}_{h,0}  
	\end{equation} 
\end{itemize}

\begin{remark}\label{remark-FE-assm}\upshape 
Examples of finite element spaces which satisfy Assumptions (A1)--(A4) include the Taylor--Hood finite element space with $I_h^X$, $I_h^Q$ and $I_h^S$ being the Scott--Zhang interpolation operators onto $\X^r_h$, $Q_h^{r-1}$ and $\S^r_h$ respectively. We refer to \cite[Section 4.8]{brenner08} and the references therein for the details on construction and properties of Scott-Zhang interpolation, and refer to \cite[Section 8.8]{boffi-13} for a proof of \eqref{inf-sup-condition0} for the Taylor-Hood finite element spaces. The following properties are consequences of the assumptions on the finite element spaces in {\bf(A1)}--{\bf(A4)}.
\begin{enumerate}
	\item From {\bf (A2)} 
	and {\bf (A3)} we can derive the following estimate for $\bv_h\in \X^r_h$:
	\begin{align*}
		\|\DD(\bv_h)\n\|_{\Sigma}&=\Big(\sum_{D\in \mathcal{T}_h(\Sigma)}\|\DD(\bv_h)\n\|^2_{L^2(D)}\Big)^{1/2}\\
		&\leq C\Big(\sum_{D\in \mathcal{T}_h(\Sigma)}h^{d-1}\|\bv_h\|^2_{W^{1,\infty}(K)}\Big)^{1/2}\quad\mbox{($K\in \mathcal{T}_h$ contains $D$)}\\
		&\leq C\Big(\sum_{D\in \mathcal{T}_h(\Sigma)}h^{-1}\|\bv_h\|^2_{H^1(K)}\Big)^{1/2}\leq Ch^{-1/2}\|\bv_h\|_{H^1} . 
	\end{align*}
Therefore, we can obtain the following inverse estimate for the boundary term $\sig(\bv_h,q_h)\n$:
	\begin{equation}\label{inverse-sig}
		\|\sig(\bv_h,q_h)\n\|_\Sigma\leq Ch^{-1/2}(\|\bv_h\|_{H^1}+\|q_h\|) . 
	\end{equation}
	\item From {\bf (A3)} and {\bf (A4)} we can see that when $r\geq 2$, the mixed finite element space $(\X_h^r,Q_h^{r-1})$ can be realized by the $(r,r-1)$ Taylor-Hood finite element space. When $r=1$, $(\X_h^1,Q_h^{0})$ can be realized by the MINI element space. 
	
	\item From inf-sup condition \eqref{inf-sup-condition0}, we can deduce the following alternative version of inf-sup condition (involving $H^1(\Sigma)$-norm in the denominator)
	\begin{equation}\label{inf-sup-condition}
		\| q_h \| {\leq C \sup_{0 \neq \bv_h \in \X^r_h}}  \frac{({\rm div} \bv_h,
			q_h)}{\| \bv_h \|_{H^1 }+\|\bv\|_{H^1(\Sigma)}} \quad
		\forall q_h \in Q^{r-1}_{h}. 
	\end{equation}
	An inf-sup condition similar to \eqref{inf-sup-condition} was proved in \cite[Lemma 2]{xujinchao-15}, though thick structure problem is considered there. For the reader's convenience, we present a proof of \eqref{inf-sup-condition} in the Appendix C.
	\item For each $\w_h\in \S^r_h$, we denote by $E_h\w_h\in \X_h^r$  an extension such that $E_h\w_h: = I_h^X\bv$, where $\bv \in H^1(\Omega)^d$ is the extension of $\w_h$ by trace theorem, satisfying $\|\bv\|_{H^1}\leq C\|\w_h\|_{H^{1/2}(\Sigma)}$ and  $\bv |_\Sigma  = \w_h$. Combining \eqref{interpolation-error-Xh-Qh} with \eqref{inverse-Sh}  we see that 
	\begin{equation}\label{Eh-estimate}
		\|E_h\w_h\|_{H^1}\leq Ch^{-1/2}\|\w_h\|_\Sigma.
	\end{equation}
	\item  Combining \eqref{interpolation-error-Xh-Qh} with \eqref{inverse-sig} we have for any $\u_h\in \X_h^r$, $p_h\in Q_h^{r-1}$
	\begin{align}\label{sig-error-inverse}
		&\|\sig(\u-\u_h,p-p_h)\n\|_{\Sigma} \nn\\
		&\leq \|\sig(\u-I_h\u,p-I_hp)\n\|_{\Sigma}+\|\sig(I_h\u-\u_h,I_hp-p_h)\n\|_{\Sigma}\nn\\
		&\leq C(\|\u-I_h\u\|_{W^{1,\infty}}+\|p-I_hp\|_{L^{\infty}})+\|\sig(I_h\u-\u_h,I_hp-p_h)\n\|_{\Sigma}\nn\\
		&\leq Ch^r+Ch^{-1/2}(\|I_h\u-\u_h\|_{H^1}+\|I_hp-p_h\|) \nn\\
		&\leq Ch^{r-1/2}+Ch^{-1/2}(\|\u-\u_h\|_{H^1}+\|p-p_h\|) , 
	\end{align}
	where we have used \eqref{interpolation-error-Xh-Qh} with $p=\infty$  and \eqref{inverse-sig} in the second to last inequality.
\end{enumerate}
\end{remark}


\subsection{A new kinematically coupled scheme and main theoretical results}

Let ${\left \{ t_{n} \right \}}_{n=0}^{N}$ be a uniform partition of the time interval $[0,T]$ with stepsize $\tau = T/N$. 
For a sequence of functions $\{\u^{n}\}_{n=0}^{N}$ 
we denote
\begin{eqnarray}
{D_{\tau}} \u^{n} = \frac{\u^{n}-\u^{n-1}}{\tau}, \quad
\textrm{for $n=1$, $2$, $\ldots$, $N$}.
\nn
\end{eqnarray}
With the above notations, we present a fully discrete kinematically coupled algorithm below.  
\vskip0.1in 

{\it Step 1}:  For given $\u_h^{n-1}, p_h^{n-1},  \e_h^{n-1}$, 
find $\s_h^n \in \S_h^{r}$ such that 
\begin{align}
&  \rho_s \epsilon_s \left  ( \frac{\s_h^n-\u_h^{n-1}}{\tau}, \, \w_h  
\right )_\Sigma 
+ a_s(\e_h^{n}, \, \w_h) 
= - ( \sig^{n-1}_h \cdot {\bf n}, \, \w_h )_\Sigma, \qquad 
\forall \w_h \in \S_h^{r}
\label{s1} 
\\ 
& \e_h^n = \e_h^{n-1} + \tau \s_h^n \, .
\nn 
\end{align} 

{\it Step 2}: Then find $(\u_h^n, p_h^n) \in \X_h^r\times Q_h^{r-1}$
satisfying 
\begin{align}
& \rho_f (D_{\tau} \u_h^n, \, \bv_h)   + a_f(\u_h^n, \, \bv_h) - b(p_h^n, \, \bv_h) 
+ b(q_h, \, \u_h^n) -  ( \sig^{n}_h \cdot {\bf n}, \, \bv_h )_\Sigma  
\label{s2} \\ 
& + \rho_s \epsilon_s \left ( \frac{\u_h^n - \s_h^n}{\tau}, \, \bv_h + \frac{\tau}{\rho_s \epsilon_s} \sig(\bv_h, q_h)\cdot \n  \right  )_\Sigma \nn\\
&+ \left ( (\sig^n_h-\sig^{n-1}_h) \cdot {\bf n}, \, \bv_h + \frac{\tau(1+\beta)}{\rho_s \epsilon_s} \sig(\bv_h, q_h)\cdot \n  \right )_\Sigma  = 0 
\nn 
\end{align} 
for all $(\bv_h, q_h) \in \X_h^r\times Q_h^{r-1}$, 
where $\sig_h^n = \sig(\u_h^n, p_h^n)$ and $\beta \geq 0$ denotes a stabilization parameter. 

{\it Initial values}: 
Since $\sig^{n-1}_h$ depends on both $\u_h^{n-1}$ and $p_h^{n-1}$, the numerical scheme in \eqref{s1}--\eqref{s2} requires the initial value $(\u_h^{0},p_h^0,\e_h^0)$ to be given. We simply assume that the initial value $(\u_h^{0},p_h^0,\e_h^0)$ are given sufficiently accurately, satisfying the following conditions:
\begin{align}\label{initial-value}
	\begin{aligned}
		\| \u_h^{0} - R_h \u^0 \| + \| \u_h^{0} - R_h \u^0 \|_\Sigma + \| \e_h^{0} - R_h \e^0 \|_{H^1(\Sigma)} &\le Ch^{r+1} , \\
		\| p_h^{0} - R_hp^0 \|_\Sigma &\le C,
	\end{aligned}
\end{align} 
where $(R_h\u^0, R_h p^0, R_h\e^0)$ satisfies a couple non-stationary Ritz projection defined in Section \ref{section:Ritz-def}. 

\begin{remark}\upshape 
Kinematically coupled schemes were firstly proposed in  \cite{BCGTQ, GGCC-2009, BM-2016SINUM} with the following time discretization: 
Find $(\s^n, \e^n)$ such that 
\begin{align} 
&  \rho_s \epsilon_s \frac{\s^n-\u^{n-1}}{\tau}
- {\cal L}_s (\e^{n}) 
= - \sig^{n-1} \cdot {\bf n} &&\mbox{on}\,\,\,\Sigma
\label{s-1} 
\\ 
& \e^n = \e^{n-1} + \tau \s^n  &&\mbox{on}\,\,\,\Sigma
\nn 
\end{align} 
and then find
$(\u^n, p^n)$
satisfying 
\begin{align} 
& \rho_f D_{\tau} \u^n  +  \nabla \cdot \sig^n = 0  \quad\mbox{and}\quad \nabla \cdot \u^n =0 &&\mbox{in}\,\,\,\Omega,
\label{s-2} \\ 
& \rho_s \epsilon_s  \frac{\u^n - \s^n}{\tau}  
+ (\sig^n-\sig^{n-1})\cdot {\bf n}  = 0 &&\mbox{on}\,\,\,\Sigma .
\nn 
\end{align} 
The extension to full discretization was considered by several authors \cite{BM-2016SINUM, Bukac2018SINUM}, while the analysis for full discretization is incomplete and  the energy stability is proved only for time-discrete schemes.
\end{remark}

\begin{remark}\upshape 
Our scheme in \eqref{s1}--\eqref{s2} is designed with two new ingredients. First, we have added two stabilization terms 
$$
\rho_s \epsilon_s \left ( \frac{\u_h^n - \s_h^n}{\tau}, \,  \frac{\tau}{\rho_s \epsilon_s} \sig(\bv_h, q_h)\cdot \n  \right  )_\Sigma 
\quad\mbox{and}\quad 
\left ( (\sig^n_h-\sig^{n-1}_h) \cdot {\bf n}, \,  \frac{\tau(1+\beta)}{\rho_s \epsilon_s} \sig(\bv_h, q_h)\cdot \n  \right )_\Sigma ,
$$
which guarantee unconditional energy stability of the scheme in \eqref{s1}--\eqref{s2}. Otherwise the unconditional energy stability cannot be proved in the fully discrete finite element setting. 
Second, we have introduced an additional parameter $\beta>0$ to the scheme, and this additional parameter allows us to prove optimal-order convergence in the $L^2$ norm (especially optimal order in space). 
More specifically, this parameter $\beta>0$ leads to the following term in the $E_1$ of \eqref{E1}: 
$$\beta_0\frac{\rho_s \epsilon_s}{2\tau}  \| \s^n_h-\u_h^{n} \|_\Sigma^2 \quad\mbox{with}\,\,\, \beta_0 = 1-(\sqrt{4+\beta^2}-\beta)/2,$$
which is used to absorb other undesired terms on the right-hand side of the inequalities in our error estimation. 
Therefore, the optimal-order $L^2$ error estimate does benefits from our scheme (with the parameter $\beta>0$). 
\end{remark} 

\begin{remark}\upshape 
For the Taylor--Hood finite element spaces, the conditions in \eqref{initial-value} on the initial values can be satisfied if one chooses $\u_h^0$ and $p_h^0$ to be the Lagrange interpolations of $\u^0$ and $p^0$, respectively, and chooses $\e_h^0=R_{sh}\e(0)$, where $R_{sh}\e(0)$ is defined in Section \ref{sec-ritz}; see Definition \ref{def-Rshe0} and estimate \eqref{super-approx}. 
\end{remark} 

The main theoretical results of this article are the following two theorems. 

\begin{theorem} \label{main-1}
Under the assumptions in Section \ref{FE-space} (on the finite element spaces), the finite element system in \refe{s1}--\refe{s2} is uniquely solvable, and the following inequality holds: 
\begin{align} 
E_0(\u_h^n, p_h^n, \e_h^n) + \sum_{m=1}^n \tau E_1(\u_h^m, \s_h^m, \e_h^m) 
\le E_0(\u_h^0, p_h^0, \e_h^0), \quad n=1,2,..., N ,
\label{stability} 
\end{align} 
where 
\begin{align} 
 E_0(\u_h^n, p_h^n, \e_h^n) 
& = 
\frac{\rho_f}{2}  \| \u_h^n \|^2 
+ \frac{1}{2}  \| \e_h^n \|_s^2 
+ \frac{\tau^2(1+\beta)}{2\rho_s \epsilon_s} \| \sig^n_h \cdot \n\|_\Sigma^2+\frac{\rho_s\epsilon_s}{2}\|\u_h^n\|_\Sigma^2 , 
\label{E0} \\ 
 E_1(\u_h^n, \s_h^n, \e_h^n) 
& =  2 \mu\| \u_h^n\|_f^2 
+ \frac{\rho_f }{2\tau}  \|  \u_h^n - \u_h^{n-1}\|^2 
+ \frac{\rho_s \epsilon_s}{2\tau} \| \s_h^n - \u_h^{n-1} \|_\Sigma^2 
+ \frac{\rho_s \epsilon_s\beta_0}{2\tau}  \| \s^n_h-\u_h^{n} \|_\Sigma^2 
\nn \\ 
&\quad\, +  \frac{\tau \beta_0}{2\rho_s \epsilon_s} 
 \| (\sig_h^n - \sig_h^{n-1}) \cdot \n \|_\Sigma^2 
 +
 \frac{\tau}{2}  \| D_\tau \e^n_h \|_s^2  ,
\label{E1} 
\end{align}  
with $\beta_0 = 1-(\sqrt{4+\beta^2}-\beta)/2$ and $\beta \ge 0$. 
\end{theorem}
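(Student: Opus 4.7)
Both subproblems are square linear systems on finite dimensional spaces, so it suffices to show the homogeneous versions admit only the trivial solution. For Step 1, testing with $\w_h=\s_h^n$ in the homogeneous case gives $\frac{\rho_s\epsilon_s}{\tau}\|\s_h^n\|_\Sigma^2+\tau\,a_s(\s_h^n,\s_h^n)=0$ (since $\e_h^n=\tau\s_h^n$ when $\e_h^{n-1}=0$), which by \eqref{Korn-ineq} forces $\s_h^n=0$. For Step 2, testing with $(\bv_h,q_h)=(\u_h^n,p_h^n)$ in the homogeneous case and applying Young's inequality to the only remaining indefinite term $(\u_h^n,\sig_h^n\cdot\n)_\Sigma$ shows that $\u_h^n=0$ and $\sig_h^n\cdot\n=0$ on $\Sigma$. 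The residual identity $b(p_h^n,\bv_h)=0$ for all $\bv_h\in\X_h^r$, combined with the inf-sup condition \eqref{inf-sup-condition0}, gives that $p_h^n$ is constant, and then $\sig_h^n\cdot\n=-p_h^n\n=0$ on $\Sigma$ forces $p_h^n=0$.

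\textbf{Energy identity.} To derive \eqref{stability} I would test \eqref{s1} with $\w_h=\s_h^n$ and \eqref{s2} with $(\bv_h,q_h)=(\u_h^n,p_h^n)$ and add the two identities. The pressure-divergence contributions cancel, and a direct rearrangement of the five boundary terms produced by the two equations (three from Step 2 of the form $-(\sig_h^n\cdot\n,\u_h^n)_\Sigma$, $(\u_h^n-\s_h^n,\sig_h^n\cdot\n)_\Sigma$, $((\sig_h^n-\sig_h^{n-1})\cdot\n,\u_h^n)_\Sigma$, plus the Step 1 term $(\sig_h^{n-1}\cdot\n,\s_h^n)_\Sigma$, together with the stabilization term) collapses into a single cross term $((\sig_h^n-\sig_h^{n-1})\cdot\n,\u_h^n-\s_h^n)_\Sigma$ plus the stabilization term $\tfrac{\tau(1+\beta)}{\rho_s\epsilon_s}((\sig_h^n-\sig_h^{n-1})\cdot\n,\sig_h^n\cdot\n)_\Sigma$. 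Applying the polarization identity $(a-b,a)=\tfrac12(|a|^2-|b|^2+|a-b|^2)$ to each discrete time difference --- namely $\rho_f(D_\tau\u_h^n,\u_h^n)$, the pair $\tfrac{\rho_s\epsilon_s}{\tau}[(\u_h^n-\s_h^n,\u_h^n)_\Sigma+(\s_h^n-\u_h^{n-1},\s_h^n)_\Sigma]$, the symmetric form $a_s(\e_h^n,D_\tau\e_h^n)$, and the stabilization term above --- and then multiplying by $\tau$, produces the telescoping difference $E_0^n-E_0^{n-1}$ together with the non-negative contributions that will constitute $\tau E_1^n$, modulo the cross term.

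\textbf{Cross-term absorption (main obstacle).} The sign-indefinite cross term is the crux of the argument. I would bound it by Young's inequality with weight $\lambda>0$, $\tau|((\sig_h^n-\sig_h^{n-1})\cdot\n,\u_h^n-\s_h^n)_\Sigma|\le\tfrac{1}{2\lambda}\|\u_h^n-\s_h^n\|_\Sigma^2+\tfrac{\tau^2\lambda}{2}\|(\sig_h^n-\sig_h^{n-1})\cdot\n\|_\Sigma^2$, and absorb both contributions into the positive coefficients $\tfrac{\rho_s\epsilon_s}{2}\|\u_h^n-\s_h^n\|_\Sigma^2$ and $\tfrac{\tau^2(1+\beta)}{2\rho_s\epsilon_s}\|(\sig_h^n-\sig_h^{n-1})\cdot\n\|_\Sigma^2$ produced above. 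Requiring the residuals to be at least $\tfrac{\rho_s\epsilon_s\beta_0}{2}$ and $\tfrac{\tau^2\beta_0}{2\rho_s\epsilon_s}$ respectively (matching the coefficients in $E_1^n$) restricts $\lambda$ to $\bigl[\tfrac{1}{\rho_s\epsilon_s(1-\beta_0)},\tfrac{1+\beta-\beta_0}{\rho_s\epsilon_s}\bigr]$, which is non-empty precisely when $(1-\beta_0)(1+\beta-\beta_0)\ge 1$, i.e.\ $\beta_0^2-(2+\beta)\beta_0+\beta\ge 0$. The largest admissible $\beta_0$ is the smaller root of this quadratic, namely $\beta_0=1-(\sqrt{4+\beta^2}-\beta)/2$ --- exactly the value stated in the theorem, and strictly positive when $\beta>0$. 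This choice yields the one-step dissipation inequality $E_0^n-E_0^{n-1}+\tau E_1^n\le 0$, and summation over $m=1,\ldots,n$ telescopes to \eqref{stability}. The principal obstacle is thus the compatibility constraint that forces this specific formula for $\beta_0$; the tunable stabilization parameter $\beta$ in the scheme exists precisely to allow $\beta_0>0$, so that the extra dissipative term $\tfrac{\rho_s\epsilon_s\beta_0}{2\tau}\|\s_h^n-\u_h^n\|_\Sigma^2$ is preserved in $E_1$ and can absorb consistency errors in the subsequent $L^2$-error analysis.
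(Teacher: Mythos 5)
Your proof is correct and the energy argument follows essentially the same route as the paper: test \eqref{s1} with $\w_h=\s_h^n$ and \eqref{s2} with $(\bv_h,q_h)=(\u_h^n,p_h^n)$, collapse the boundary contributions into the single cross term $((\sig_h^n-\sig_h^{n-1})\cdot\n,\u_h^n-\s_h^n)_\Sigma$ plus the stabilization term, expand the discrete time differences via the polarization identity, and absorb the cross term by Young's inequality with a tunable weight, which yields exactly the constraint $\beta_0^2-(2+\beta)\beta_0+\beta\ge 0$ and hence $\beta_0=1-(\sqrt{4+\beta^2}-\beta)/2$ (the paper picks the particular weight $\lambda=(1+\beta-\beta_0)/(\rho_s\epsilon_s)$, which is the upper endpoint of your admissible interval). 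You also supply a correct unique-solvability argument for the two linear subproblems that the paper asserts but does not spell out.
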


\begin{theorem} \label{main-2} 
For finite elements of degree $r\ge 2$, under the assumptions in Sections \ref{section:regularity}--\ref{FE-space} (on the regularity of solutions and finite element spaces), 
there exist positive constants $\tau_0$ and $h_0$ such that, for sufficiently small stepsize and mesh size $\tau\le\tau_0$ and $h\le h_0$, the finite element solutions given by \refe{s1}--\refe{s2} with initial values satisfying \eqref{initial-value} and $\beta>0$ has the following error bound: 
\begin{align} 
\max_{1\le n\le N} 
\big( \| \u(t_n, \cdot) - \u_h^n \|  + \| \e(t_n, \cdot) - \e_h^n \|_\Sigma  + \|\u(t_n,\cdot)-\u_h^n\|_{\Sigma} \big) 
& \le C ( \tau + h^{r+1}), 
\label{main-error} 
\end{align} 
where $C$ is some positive constant independent of $n$, $h$ and $\tau$.
\end{theorem}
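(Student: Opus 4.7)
\medskip
\noindent\textbf{Proof plan for Theorem \ref{main-2}.}

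The natural strategy is a two-part error decomposition against the non-stationary coupled Ritz projection $(R_h\u,R_hp,R_h\e)$ introduced in Section 3:
\begin{equation*}
\u^n-\u_h^n = (\u^n-R_h\u^n)+(R_h\u^n-\u_h^n), \quad p^n-p_h^n = (p^n-R_hp^n)+(R_hp^n-p_h^n), \quad \e^n-\e_h^n = (\e^n-R_h\e^n)+(R_h\e^n-\e_h^n).
\end{equation*}
The projection part is controlled directly by the Ritz projection error estimates stated in Section 3 (proved in Section 4), which deliver $\|\u^n-R_h\u^n\|+\|\u^n-R_h\u^n\|_\Sigma+\|\e^n-R_h\e^n\|_\Sigma\le Ch^{r+1}$. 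The whole burden therefore falls on the ``discrete error'' $(e_\u^n,e_p^n,e_\e^n):=(R_h\u^n-\u_h^n,\,R_hp^n-p_h^n,\,R_h\e^n-\e_h^n)$, together with the auxiliary quantity $e_\s^n:=R_h\u^n-\s_h^n$.

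First I would derive the error equations. Testing the continuous weak form \eqref{weak} with discrete test functions, subtracting the two steps \eqref{s1}--\eqref{s2}, and using the weak formulation satisfied by the Ritz projection (whose defining equations are engineered precisely so that the spatial bilinear forms $a_f$, $b$, $a_s$ cancel when paired with $(e_\u,e_p,e_\e)$), one obtains a system that looks structurally identical to \eqref{s1}--\eqref{s2} with $(\u_h^n,p_h^n,\e_h^n,\s_h^n)$ replaced by $(e_\u^n,e_p^n,e_\e^n,e_\s^n)$, but with inhomogeneous right-hand sides consisting of two kinds of residuals: (i) time-discretization residuals $\rho_f(D_\tau R_h\u^n-\partial_t\u^n,\cdot)$, $\rho_s\epsilon_s(D_\tau R_h\e^n-\partial_t\e^n,\cdot)_\Sigma$ etc., which by Taylor expansion contribute $O(\tau)$; and (ii) consistency residuals generated by the kinematic decoupling mismatch $\s_h^n$ versus $\u_h^n$, and by the $\sig$-stabilization pair built into \eqref{s2}, applied to the (small) projection errors at the interface.

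Next I would apply the energy argument of Theorem \ref{main-1} verbatim to the error equations: this reproduces the stability identity with $E_0(e_\u^n,e_p^n,e_\e^n)+\sum_{m=1}^n\tau E_1(e_\u^m,e_\s^m,e_\e^m)$ on the left and the duality pairings of the residuals against the test functions on the right. The initial data assumption \eqref{initial-value} bounds $E_0(e_\u^0,e_p^0,e_\e^0)$ at optimal order, once one combines $\|p_h^0-R_hp^0\|_\Sigma\le C$ with the $\tau^2/(\rho_s\epsilon_s)$ weight in $E_0$ so that the initial pressure term enters as $O(\tau^2)$. The residuals on the right-hand side are then estimated by Cauchy--Schwarz and Young's inequality: the ``bulk'' residuals are absorbed into $\tfrac{\rho_f}{2}\|e_\u^n\|^2$ and the $\|\cdot\|_f$ dissipation; the interface residuals involving $\sig(\bv_h,q_h)\n$ are the delicate ones, and here the inverse estimate \eqref{inverse-sig} would ordinarily cost a factor $h^{-1/2}$. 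This is exactly where the stabilization parameter $\beta>0$ pays off: the extra term $\tfrac{\rho_s\epsilon_s\beta_0}{2\tau}\|\s_h^n-\u_h^n\|_\Sigma^2$ in $E_1$ (together with the $\tfrac{\tau\beta_0}{2\rho_s\epsilon_s}\|(\sig_h^n-\sig_h^{n-1})\n\|_\Sigma^2$) absorbs those $h^{-1/2}$-weighted interface terms after multiplying by the corresponding projection errors $\|R_h\u^n-I_h^X\u^n\|_\Sigma$ etc., which the Ritz estimates control at order $h^{r+1}$. A discrete Gronwall inequality then converts the accumulated residual bound into
\begin{equation*}
\max_{1\le n\le N}\big(\|e_\u^n\|+\|e_\u^n\|_\Sigma+\|e_\e^n\|_\Sigma\big)\le C(\tau+h^{r+1}),
\end{equation*}
and adding the projection part finishes \eqref{main-error}.

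\medskip
\noindent\textbf{Anticipated main obstacle.} The genuinely hard step is the interface consistency estimate: controlling the residuals produced by pairing $\sig(\bv_h,q_h)\n$ against projection-error traces on $\Sigma$ without losing the optimal power of $h$. Naively one loses $h^{-1/2}$ via \eqref{inverse-sig} or $h^{-1/2}$ via \eqref{Eh-estimate}. The proof must exploit (a) the $\beta_0>0$ absorption terms in $E_1$, (b) the special Ritz-projection error bound on $\|R_h\u^n-\u^n\|_\Sigma$ at super-convergent order (obtained via the dual problem \eqref{dual-phi-eq-0}--\eqref{xi-strong-eq-0} and the well-chosen initial value $R_h\e(0)$), and (c) careful summation by parts in time for the $(\sig^n_h-\sig^{n-1}_h)\n$ terms so that only one factor of $\tau^{-1/2}$ appears and is then cancelled by a matching $\tau^{1/2}$ from the Ritz time-regularity. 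Balancing these three ingredients is the technical heart of the argument; once they are in place, the remaining estimates reduce to standard parabolic-type energy/Gronwall manipulations.
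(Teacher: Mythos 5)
Your proposal is correct and matches the paper's actual argument: the same decomposition of the error into projection and discrete components via the coupled non-stationary Ritz projection, the same reuse of the Theorem~\ref{main-1} energy identity on the $\delta$-error equations, and the same identification of the $\beta_0$-weighted $\tau^{-1}\|\s_h^n-\u_h^n\|_\Sigma^2$ term in $E_1$ as the device that absorbs the $h^{-1/2}$ loss from the interface/stress terms, followed by Gronwall and the $r\ge 2$ condition to dispose of the $h^{r-1/2}\tau^{1/2}$ intermediate bound. The one small imprecision is your mention of ``summation by parts in time'' for the $(\sig_h^n-\sig_h^{n-1})\n$ term; the paper instead writes this difference as $\int_{t_{n-1}}^{t_n}\partial_t\theta_\sigma\,dt$ and uses the Ritz time-derivative bound directly, letting the resulting $\tau^2\|\delta_\sigma^n\cdot\n\|_\Sigma^2$ term be absorbed by the $E_0$ weight under Gronwall, but this is only a cosmetic difference in bookkeeping.
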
 

The proofs of Theorem \ref{main-1} and Theorem \ref{main-2} are presented in the next section. 

%
%
\section{Analysis of the proposed algorithm} 
\setcounter{equation}{0}

This section is devoted to the proof of Theorems \ref{main-1} and \ref{main-2}. For the simplicity of notation, we denote by $C$ a generic positive constant, which is independent of $n$, $h$ and $\tau$ but may depend on the physical parameters $\rho_s,\epsilon, \mu, \rho_f$ and the exact solution $(\u,p,\e)$. In addition, we denote by $a\lesssim b$ the statement ``$|a|\le b$''.

%
%
\subsection{Proof of Theorem \ref{main-1}} 
We rewrite \refe{s2} into 
\begin{align} 
& \rho_f  (D_{\tau} \u_h^n, \, \bv_h)   + a_f(\u_h^n, \, \bv_h) -b(p_h^n, \, \bv_h^n) 
+ b(q_h, \, \u_h^n) 
 + \rho_s \epsilon_s \left ( \frac{\u_h^n - \s_h^n}{\tau}, \, \bv_h \right  )_\Sigma 
 \label{s2-1} \\ 
& = ( \sig^{n-1}_h \cdot {\bf n}, \, \bv_h )_\Sigma  
-  ( \u_h^n - \s_h^n, \, \sig(\bv_h, q_h)\cdot \n  )_\Sigma  
-  \frac{\tau(1+\beta)}{\rho_s \epsilon_s}  ( (\sig^n_h-\sig^{n-1}_h) \cdot {\bf n}, \,  \sig(\bv_h, q_h)\cdot \n  )_\Sigma  \, . 
\nn
\end{align} 
Taking $\bv_h = \u_h^n, q_h = p_h^n$ in (\ref{s2-1}) and $\w_h=\s_h^n =D_\tau \e_h^n$ in (\ref{s1}), respectively, gives the following relations:
 \begin{align} 
 & \frac{\rho_f}{2\tau } \left ( \| \u_h^n \|^2 - \| \u_h^{n-1} \|^2  + \| \u_h^n - \u_h^{n-1} \|^2 
\right ) +2 \mu \| \u_h^n  \|_f^2 
+  \rho_s \epsilon_s \left ( \frac{\u_h^n - \s_h^n}{\tau}, \, \u_h^n 
\right  )_\Sigma 
\nn \\ 
& = ( \sig^{n-1}_h \cdot {\bf n}, \, \u^n_h )_\Sigma  
-  ( \u_h^n - \s_h^n, \, \sig_h^n\cdot \n  )_\Sigma  
-  \frac{\tau(1+\beta)}{\rho_s \epsilon_s}  ( (\sig^n_h-\sig^{n-1}_h) \cdot {\bf n}, \,  \sig_h^n \cdot \n  )_\Sigma  
\nn 
\end{align} 
and 
\begin{align} 
& \frac{1}{2\tau} \left ( a_s( \e_h^n, \e_h^n) - a_s( \e_h^{n-1}, \e_h^{n-1}) 
+ \tau^2 a_s( \s_h^n, \s_h^n) \right ) 
+ \rho_s \epsilon_s \left ( \frac{\s_h^n-\u_h^{n-1}}{\tau}, \, \s_h^n  
\right )_\Sigma 
= - ( \sig^{n-1}_h \cdot {\bf n}, \, \s_h^n )_\Sigma \, .
\nn 
\end{align} 
By summing up the last two equations, we have  
\begin{align} 
& \frac{\rho_f}{2} \left ( \| \u_h^n \|^2 - \| \u_h^{n-1} \|^2  + \| \u_h^n - \u_h^{n-1} \|^2 
\right ) +2 \mu \tau \| \u_h^n \|_f^2 + 
\frac{\rho_s \epsilon_s}{2} \left ( \| \s^n_h-\u_h^{n-1} \|_\Sigma^2 + \| \u_h^n - \s_h^n \|_\Sigma^2 \right ) 
\nn \\ 
& + \frac{1}{2} \left ( 
 a_s(\e_h^n, \e_h^n) - a_s(\e_h^{n-1}, \e_h^{n-1}) + \tau^2 a_s( \s_h^n, 
\s_h^n ) \right )+\frac{\rho_s\epsilon_s}{2}\left(\|\u_h^n\|^2_\Sigma-\|\u_h^{n-1}\|^2_\Sigma\right)
 \nn \\ 
 & = \tau ((\sig_h^{n-1}-\sig_h^n) \cdot \n, \u_h^n - \s_h^n)_\Sigma
  - \frac{\tau^2(1+\beta)}{\rho_s \epsilon_s} ((\sig^n_h-\sig^{n-1}_h) \cdot {\bf n}, \,  \sigma_h^n\cdot \n  )_{\Sigma}   
\nn \\ 
& \le  \frac{\tau^2(1+\beta-\beta_0)}{2\rho_s \epsilon_s} \| (\sig_h^n - \sig_h^{n-1}) \cdot \n \|_\Sigma^2 
+ \frac{\rho_s \epsilon_s}{2(1+\beta-\beta_0)} \| \u_h^n - \s_h^n \|_\Sigma^2  
\nn \\ 
& \quad - \frac{\tau^2(1+\beta)}{2\rho_s \epsilon_s} \left ( \| \sig^n_h \cdot \n\|_\Sigma^2 - \| \sig_h^{n-1} \cdot \n\|_\Sigma^2 
+ \| (\sig_h^n - \sig_h^{n-1})\cdot \n \|_\Sigma^2 \right ) 
\nn \\ 
& \le 
\frac{\rho_s \epsilon_s(1-\beta_0)}{2} \| \u_h^n - \s_h^n \|_\Sigma^2  
- \frac{\tau^2(1+\beta)}{2\rho_s \epsilon_s} \left ( \| \sig^n_h \cdot \n\|_\Sigma^2 - \| \sig_h^{n-1} \cdot \n\|_\Sigma^2  \right ) 
-  \frac{\tau^2\beta_0}{2\rho_s \epsilon_s} \| (\sig^n_h - \sig_h^{n-1}) \cdot \n\|_\Sigma^2   , 
\nn 
 \end{align} 
which leads to the following energy inequality: 
\begin{align} 
&  E_0(\u_h^n, p_h^n, \e_h^n) - E_0(\u_h^{n-1}, p_h^{n-1}, \e_h^{n-1}) 
+ E_1(\u_h^n, p_h^n, \e_h^n) \tau \le 0 \, . 
\label{stability-2} 
\end{align} 
This implies \refe{stability} and completes the proof of Theorem \ref{main-1}. 
\hfill \endproof 
\vskip0.1in

%
%
\subsection{A coupled non-stationary Ritz projection} \label{section:Ritz-def}
To establish $L^2$-norm optimal error estimate as given in Theorem \ref{main-2}, we need to introduce a new coupled Ritz projection. Since the thin fluid-structure model is governed by the Stokes type equation for fluid coupled with a hyperbolic type equation for solid, the coupled projection, which is non-stationary and much more complicated than  the standard Ritz projections, plays a key role in proving the optimal-order convergence of finite element solutions to the fluid-structure models. 

 
\begin{definition}[{{\bf Coupled non-stationary Ritz projection}}]\label{ritz-def} 
Let $(\u, p, \e) \in {\bf X} \times Q \times \S$ be a triple of functions smoothly depending on $t\in[0,T]$ and satisfying the condition $\u |_\Sigma = \partial_t \e$. 
For a given initial value $R_h \e(0)$, the coupled Stokes--Ritz projection $R_h(\u, p, \e): =(R_h\u, R_h p, R_h\e) \in \X_h^r\times Q_h^{r-1} \times \S_h^r$ is defined as a triple of functions satisfying $(R_h \u)|_\Sigma = \partial_t R_h \e$  and the following weak formulation for every $t\in[0,T]$: 
\begin{align}\label{evolution-ritz}
a_f(\u - R_h \u, \bv_h) & - b( p-R_hp, \bv_h) + b(q_h, \u - R_h \u)+(\u-R_h\u,\bv_h)
\nn \\ 
& + a_s(\e - R_h \e, \bv_h) + ( \e - R_h \e, \bv_h )_\Sigma = 0, 
 \qquad \forall (\bv_h, q_h) \in  \X_h^r\times Q_h^{r-1} . 
\end{align} 
\end{definition}
\begin{remark}\upshape
	Given an initial value $R_h\e(0)$, there exists a unique solution $(R_h\u,R_hp,R_h\e_h)$ for the finite element semi-discrete problem \eqref{evolution-ritz}. To see this, we firstly introduce a linear operator 
	$\mathcal{S}_h:(\X_h^{r})^*\times (Q_h^{r-1})^*\to \X_h^r\times Q_h^{r-1}$, where $(\X_h^{r})^*$ and $(Q_h^{r-1})^*$ denote the dual space of $\X_h^r$ and $Q_h^{r-1}$, respectively.  For a given $(\phi,\ell)\in (\X_h^{r})^*\times (Q_h^{r-1})^*$, denote by $(\u_h,p_h)\in \X_h^r\times Q_h^{r-1}$ the solution of  the following Neumann-type discrete Stokes equation 
	\begin{align*}
		&a_f(\u_h,\bv_h)-b(p_h,\bv_h)+(\u_h,\bv_h)=\phi(\bv_h)\quad \forall \bv_h\in \X_h^r,\\
		&b(q_h,\u_h)=\ell(q_h)\quad \forall q_h\in Q_h^{r-1},
	\end{align*}
	and define $\mathcal{S}_h(\phi,\ell)=(\mathcal{S}_h^v(\phi,\ell),\mathcal{S}_h^p(\phi,\ell)):=(\u_h,p_h)$. 
	The well-posedness of the above equation follows the inf-sup condition \eqref{inf-sup-condition}. 

Next, we denote 	
	\begin{align*}
	\phi_{(u,p,\eta)}(\bv_h)&:=a_f(\u,\bv_h)-b(p,\bv_h)+(\u,\bv_h)+a_s(\e,\bv_h)+(\e,\bv_h)_\Sigma,\\
	\phi_{R_h\eta}(\bv_h)&:=a_s(R_h\e,\bv_h)+(R_h\e,\bv_h)_\Sigma,\\
	\ell_{u}(q_h)&:= b(q_h,\u), \quad \phi_{R_h\eta}(\bv_h):=a_s(R_h\e,\bv_h)+(R_h\e,\bv_h)_\Sigma. 
	\end{align*}
Then $(R_h\u,R_hp,R_h\e)$ is a solution to \eqref{evolution-ritz} if and only if the following equations are satisfied 
	\begin{subequations}
	\begin{align}
	\partial_t R_h\e&= \mathcal{S}_h^v(\phi_{(u,p,\eta)}-\phi_{R_h\eta},\ell_{u})|_\Sigma\label{eta-evolution-ritz}\\
	R_h\u&=\mathcal{S}_h^v(\phi_{(u,p,\eta)}-\phi_{R_h\eta},\ell_{u}) \label{u-evolution-ritz},\quad R_hp= \mathcal{S}_h^p(\phi_{(u,p,\eta)}-\phi_{R_h\eta},\ell_{u})
	\end{align}	
	\end{subequations}
	Therefore, the uniqueness and existence of solution to \eqref{evolution-ritz} follows the uniqueness and existence of solution to \eqref{eta-evolution-ritz}. Since $\mathcal{S}_h^v$ is a linear operator on $(\X_h^{r})^*\times (Q_h^{r-1})^*$ and $\phi_{R_h\eta}$ is linear for $R_h\e$, \eqref{eta-evolution-ritz} is an in-homogeneous linear ordinary differential equation for  $R_h\e$ and thus admits a unique solution for given any initial value $R_h\e(0)$.
\end{remark}


In order to guarantee that the coupled non-stationary Ritz projection $R_h$ possesses optimal-order approximation properties, we need to define $R_h\e(0)$ in a rather technical way. Therefore, we present error estimates for this coupled non-stationary Ritz projection in Theorem \ref{main-3} and postpone the definition of $R_h\e(0)$ and the proof of Theorem \ref{main-3} to Section \ref{sec-ritz}. 

\begin{theorem}[{{\bf Error estimates for the coupled non-stationary Ritz projection}}]
\label{main-3} For sufficiently smooth functions $(\u, p, \e)$ satisfying $\u|_{\Sigma}=\partial_t\e$, there exists $\w_h \in \S_h^r$ such that when $R_h\e(0) = \w_h$, the following estimates hold uniformly for $t\in[0,T]$:  
\begin{align}
\label{SR-error-1} 
\max_{t\in[0,T]}
\big(  \| \e - R_h \e \|_{\Sigma} + \| \u - R_h \u \|+\| \u - R_h \u \|_{\Sigma} + h\|p-R_hp\|  \big) &\le C h^{r+1} 
\\ 
\label{SR-error-2} 
 \max_{t\in[0,T]} \big( \| \partial_t (\u - R_h \u) \|_{H^1}+\| \partial_t (\u - R_h \u) \|_{H^1}+\|\partial_t(p-R_hp)\| \big)
&\le C h^r
\\
 \| \partial_t (\u - R_h \u) \|_{L^2L^2(\Sigma)}+\|\partial_t(\u-R_h\u)\|_{L^2L^2} & \le C h^{r+1} \, . 
 \label{SR-error-3} 
\end{align} 
\end{theorem}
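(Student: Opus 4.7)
The plan is to split the errors as $\u - R_h\u = (\u - I_h^X\u) - \theta_\u$ with $\theta_\u := R_h\u - I_h^X\u \in \X_h^r$, and analogously for $\theta_p$ and $\theta_\e$. Standard interpolation bounds from (A3) take care of the first pieces, so the task reduces to controlling the discrete remainders. Subtracting the appropriate interpolant from \eqref{evolution-ritz} yields a Galerkin system for $(\theta_\u, \theta_p, \theta_\e)$ with interpolation residuals on the right. The compatibility $I_h^X\u|_\Sigma = I_h^S(\u|_\Sigma)$ from (A3), combined with $\partial_t I_h^S = I_h^S\partial_t$, transfers the kinematic constraint cleanly as $\theta_\u|_\Sigma = \partial_t \theta_\e$, turning the whole system into an evolution on $\Sigma$ whose initial datum $\theta_\e(0) = I_h^S\e(0) - \w_h$ is at our disposal via the choice of $\w_h$.

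\textbf{Energy-type estimates for \eqref{SR-error-2}.} Differentiating the Galerkin error equation in $t$ and testing with $(\partial_t\theta_\u, \partial_t\theta_p)$, the boundary coupling terms telescope into $\tfrac{1}{2}\partial_t(\|\partial_t\theta_\e\|_s^2 + \|\partial_t\theta_\e\|_\Sigma^2)$ via $\partial_t\theta_\u|_\Sigma = \partial_{tt}\theta_\e$. Cauchy--Schwarz on the interpolation residuals followed by Gronwall gives $L^2(0,t;H^1)$ control of $\partial_t\theta_\u$ and $L^\infty(0,t;H^1(\Sigma))$ control of $\partial_t\theta_\e$ at rate $h^r$; the inf-sup condition \eqref{inf-sup-condition} then supplies the matching bound on $\partial_t\theta_p$. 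Combined with the interpolation errors this proves \eqref{SR-error-2}. The same identity integrated in time controls $\|\partial_{tt}\theta_\e\|_{L^2 L^2(\Sigma)}$, which via the trace identity equals $\|\partial_t\theta_\u\|_{L^2 L^2(\Sigma)}$ and gives the boundary part of \eqref{SR-error-3}; the interior $L^2 L^2$ bound follows from the analogous un-differentiated energy estimate.

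\textbf{Duality for the optimal $L^2$ rates in \eqref{SR-error-1}.} For a fixed target $t^* \in (0,T]$ and arbitrary test datum $\f \in L^2(\Sigma)$, I would solve the backward adjoint \eqref{dual-phi-eq-0} on $[0,t^*]$. Via the reformulation \eqref{xi-strong-eq-0}, this collapses to a backward evolution on $\Sigma$ for $\bxi = \sig(\bvarphi,q)\n$ driven by the Neumann-to-Dirichlet map $\mathcal{N}$ of the Stokes operator; the regularity assumptions \eqref{reg-ass1}--\eqref{reg-ass3} together with \eqref{Ls-second-order} supply the matching Sobolev control of $(\bvarphi,q)$ in $\Omega$ and of $\bxi$ on $\Sigma$. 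Testing the Ritz error equation against the $\X_h^r \times Q_h^{r-1}$-interpolant of $(\bvarphi,q)$ and integrating by parts in time (using the final condition $\bxi(T)=0$), one expresses each of $\|\e - R_h\e\|_\Sigma(t^*)$, $\|\u - R_h\u\|(t^*)$, $\|\u - R_h\u\|_\Sigma(t^*)$ as a pairing against interpolation errors of the adjoint solution, each bounded by $h^{r+1}\|\f\|_\Sigma$. Choosing $\f$ dual to the target norm yields \eqref{SR-error-1}; the suboptimal pressure factor $h\|p - R_hp\|$ is recovered directly from the inf-sup condition at fixed $t$.

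\textbf{Initial value and main obstacle.} Both arguments above hinge on the initial discrete error $\theta_\e(0) = I_h^S\e(0) - \w_h$ and, after integration by parts in time, on boundary pairings at $t=0$ involving $\partial_t R_h\e(0) = R_h\u(0)|_\Sigma$. Taking the generic choice $\w_h = I_h^S\e(0)$ provides only $O(h^{r+1})$ in $L^2(\Sigma)$ but fails to annihilate these adjoint pairings, polluting the optimal $L^2$ rate. The remedy, and the step I expect to be the main technical obstacle, is to construct $\w_h$ as a tailored stationary coupled Stokes--membrane projection of $(\u(0), p(0), \e(0))$ adapted to the adjoint problem at $t=0$, enforcing that the induced $R_h\u(0)|_\Sigma$ matches $\u(0)|_\Sigma$ in the duality norm to order $h^{r+1}$. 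Establishing this super-approximation property---a discrete analogue of inverting the Neumann-to-Dirichlet map at $t=0$---and then threading it through the Gronwall closure and the duality integration-by-parts is the delicate construction that the paper defers to Section \ref{sec-ritz}.
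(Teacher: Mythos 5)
Your proposal correctly identifies the two-step structure of the paper's argument — energy-type estimates (differentiating the projection identity in time and testing with time derivatives) for the $H^r$-rate bounds in \eqref{SR-error-2}, and a backward-in-time duality argument built on the reformulation \eqref{xi-strong-eq-0} for the optimal $L^2$ rates in \eqref{SR-error-1} and \eqref{SR-error-3} — and you correctly flag the construction of $R_h\e(0)$ as the decisive technical step that the paper defers to Section~4. That said, there are two places where your sketch diverges from the paper's route in ways that would require additional work or a change of course.

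First, your decomposition via the interpolant $\theta_\u := R_h\u - I_h^X\u$ is a genuine variant of the paper's decomposition via the auxiliary Dirichlet Stokes--Ritz projection $R_h^D$, but the substitution is not cost-free. The paper's $R_h^D$ satisfies the \emph{discrete} divergence-free relation $b(q_h,\u - R_h^D\u)=0$ for all $q_h\in Q_h^{r-1}$ (Remark~\ref{remark-4-2-div-free}), so when one tests \eqref{L2H1-estimate-tmp1} with $\partial_t(R_h\u - R_h^D\u)$ the pressure pairing vanishes exactly and the energy identity closes cleanly. For your $I_h^X\u$, the residual $b(q_h,\u - I_h^X\u)$ does not vanish, so after differentiating in time and testing with $(\partial_t\theta_\u,\partial_t\theta_p)$ a genuine $O(h^r)\,\|\partial_t\theta_p\|$ term survives on the right that must be closed by invoking the inf-sup bound for $\partial_t\theta_p$ and then absorbed by Young's inequality. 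This can probably be made to work, but you should not claim the boundary terms ``telescope'' into a clean $\frac12\partial_t$ of energy without treating this pressure residual explicitly. More importantly, $R_h^D$ (built on $R_h^S$) enjoys the negative-norm estimate \eqref{RhD-neg-error}, which is what makes the super-approximation machinery at $t=0$ (Appendix~B) go through; the Lagrange/Scott--Zhang interpolant has no such negative-norm property, so your $I_h$-based decomposition cannot supply the key initial-value lemma.

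Second, your description of the needed initial-value property is somewhat off target. You write that the construction should force ``the induced $R_h\u(0)|_\Sigma$ to match $\u(0)|_\Sigma$ in the duality norm to order $h^{r+1}$,'' but that is the easy part (any good interpolant already gives $O(h^{r+1})$ on $\Sigma$). The property that actually controls the Gronwall closure for \eqref{SR-error-2}--\eqref{SR-error-3} is the optimal-order bound on the \emph{time derivative} at $t=0$, namely $\|\partial_t(\u - R_h\u)(0)\| + \|\partial_t(\u - R_h\u)(0)\|_\Sigma + h\|\partial_t(p - R_hp)(0)\| \le Ch^{r+1}$ (Lemma~\ref{time-derivative-ritz-u-p}). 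Since $\partial_t R_h\u(0)$ is determined implicitly by the evolution \eqref{evolution-ritz-t1} once $R_h\e(0)$ is fixed, the paper engineers $R_h\e(0)$ precisely so that $\partial_t R_h\u(0)$ coincides — up to an $O(h^{r+1})$ multiple of the auxiliary $\u_h^\#$ — with the Dirichlet Stokes--Ritz projection $R_h^D\partial_t\u(0)$ of $\partial_t\u(0)$. In particular the construction unavoidably requires $\partial_t\u(0)$ and $\partial_t p(0)$ as ingredients (see \eqref{ritz-initial1} and Definition~\ref{def-Rhe0}), not merely the data $(\u(0),p(0),\e(0))$ you mention. Without this feature a generic choice of $\w_h$ would give only $\|\partial_t(\u - R_h\u)(0)\|_\Sigma = O(h^r)$, which pollutes the $L^\infty H^1$ bound \eqref{H1-norm-error2} and hence both \eqref{SR-error-2} and, through the truncation estimate \eqref{T0n-RhT0n}, the final $L^2$ convergence rate.
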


\subsection{Proof of Theorem \ref{main-2}} 

For the solution $(\u, p, \e)$ of problem \eqref{f-e}--\eqref{bc}, we define the following notations: 
\begin{align} \label{def-sn-un-en}
&\u^n = \u(t_{n}, \cdot) , \quad \e^n = \e(t_n, \cdot) ,\quad p^n = p(t_{n}, \cdot). 
\end{align} 
For the analysis of the kinematically coupled scheme, we introduce $\s^n\in H^1(\Sigma)$ and $R_h\s^n\in \S_h^r$ by 
$$
\s^n = \partial_t \e(t_n, \cdot) = \u(t_{n}, \cdot) \quad\mbox{and}\quad 
R_h\s^n:=(R_h\u)(t_n)=\partial_tR_h\e(t_n) 
\quad\mbox{on}\,\,\,\Sigma, 
$$ 
which satisfy the following estimates according to the estimates in Theorem \ref{main-3}: 
\begin{align} \label{SR-error-4} 
& \| \s^n - R_h \s^n \|_{\Sigma} \le C h^{r+1} \, . 
\end{align} 
By Taylor's expansion, we have $\e^n =  \e^{n-1} + \tau\s^{n}+\mathcal{T}_0^n$, with a truncation error $\mathcal{T}_0^n$ which has the following bound:
\begin{align}\label{Estimate-T0n}
\|\mathcal{T}_0^n\|_{H^1(\Sigma)}\leq C\tau^2\quad \forall\, n\geq 1\, . 
\end{align}
By \eqref{f-e}--\eqref{bc}, we can see that the sequence $( \u^n, p^n,  \e^n, \s^n)$ satisfies the following weak formulations 
\begin{align} 
&  \rho_s \epsilon_s \left ( \frac{ \s^n- \u^{n-1}}{\tau}, \, \w_h \right  )_\Sigma 
+ a_s(\e^{n}, \, \w_h) 
+ (  \sig^{n-1} \cdot {\bf n}, \, \w_h )_\Sigma 
= {\cal E}_s^n(\w_h), \qquad 
\forall \w_h \in \S_h^r 
\label{s1-3} 
\end{align} 
and 
\begin{align} 
& \rho_f  (D_{\tau} \u^n, \, \bv_h)   + a_f(\u^n, \, \bv_h) - b(p^n, \, \bv_h) 
+ b(q_h, \,  \u^n) 
 + \rho_s \epsilon_s \left ( \frac{\u^n -  \s^n}{\tau}, \, \bv_h \right  )_\Sigma 
 \nn \\ 
& = ( \sig^{n-1} \cdot {\bf n}, \, \bv_h )_\Sigma  
-  ( \u^n - \s^n, \, \sig(\bv_h, q_h)\cdot \n  )_\Sigma  
-  \frac{\tau(1+\beta)}{\rho_s \epsilon_s}  ( ( \sig^n- \sig^{n-1}) \cdot {\bf n}, \,  \sig(\bv_h, q_h)\cdot \n  )_\Sigma  
\nn \\ 
& \quad +  {\cal E}_{f}^n(\bv_h, q_h) , \hskip1in  \forall (\bv_h, q_h)\in \X_h^r\times Q_h^{r-1}
\label{s2-3} 
\end{align} 
where $ \sig^n = \sig(\u^n,  p^n)$ and 
the truncation error functions satisfy the following estimates: 
\begin{align} 
\begin{array}{ll} 
& |{\cal E}_{s}^n(\w_h)| \le C\tau \| \w_h \|_\Sigma , \\ [5pt]
&|  {\cal E}_{f}^n(\bv_h, \q_h) | \le  C  \tau(\| \bv_h \|_{\Sigma} +\|\bv_h\|)
+ C\tau^2 \| \sig(\bv_h, \q_h) \cdot \n \|_{\Sigma} \, .
\end{array} 
\label{EE} 
\end{align} 

For given $( \u^n, p^n,  \e^n, \s^n)$, we denote by $(R_h\u^n,R_hp^n,R_h\e^n,R_h\s^n)$ the corresponding coupled Ritz projection and define $R_h\mathcal{T}_0^n$ to be the defect satisfying
\begin{align} 
	& R_h\e^n =  R_h\e^{n-1} + \tau R_h\s^{n}+ R_h \mathcal{T}_0^n \qquad \forall n\geq 1\nn .    
\end{align} 
Then we introduce the following error decomposition:  
\begin{align} 
& e_u^n: =  \u^n - \u_h^n = \u^n - R_h \u^n + R_h \u^n - \u_h^n: = \theta_u^n + \delta_{u}^n, \hskip1in  \mbox{in } \Omega  
\nn \\ 
&e_p^n: =   p^n - p_h^n = p^n - R_h p^n + R_h p^n - p_h^n: = \theta_p^n + \delta_{p}^n, 
\hskip1.1in \, \mbox{in } \Omega  
\nn \\ 
&e_\sigma^n: =  \sig(\u^n, p^n) - \sig(\u_h^n, p_h^n) 
= \sig(\theta_u^n, \theta_p^n) + \sig(\delta_u^n, \delta_p^n) 
: = \theta_\sigma^n + \delta_\sigma^n  
\qquad \quad \, \mbox{in } \Omega  
\nn \\ 
& e_s^n: =  \s^n - \s_h^n = \s^n - R_h \s^n + R_h \s^n - \s_h^n: = \theta_s^n + \delta_s^n, \hskip1.1in  \, \, \mbox{on } \Sigma  \, . 
\nn \\ 
& e_\eta^n: =  \e^n - \e_h^n = \e^n - R_h \e^n + R_h \e^n - \e_h^n: = \theta_\eta^n + \delta_\eta^n, \hskip1in \, \, \, \mbox{on } \Sigma  \, . \nn
\end{align} 
Since $\u^n|_\Sigma = \s^n$, it follows that $\theta_u^n|_\Sigma=\theta_s^n$. Moreover, the following relations hold: 
\begin{align} 
(\u^n - \u^{n-1})-(\s^n_h - \u^{n-1}_h)  & = \theta_u^n + \delta_s^n  
 - \theta_u^{n-1} - \delta_u^{n-1} , 
\nn \\ 
(\u^n - \u^n)-(\u^n_h - \s^{n}_h)  
& = \theta_u^n +  \delta_u^n - \theta_u^{n}  - \delta_s^n  
=  \delta_u^n -  \delta_s^{n}
\quad\mbox{on $\Sigma$} .
\nn 
\end{align}  

By using \refe{s1}--\refe{s2} and \refe{s1-3}--\refe{s2-3}, we can write down the following error equations:
\begin{align} 
&  \rho_s \epsilon_s \left ( \frac{ \delta_s^n - \delta_u^{n-1}}{\tau}, \, \w_h \right )_\Sigma 
+ a_s( \delta_\eta^n, \, \w_h) 
+ ( \delta_\sigma^{n-1} \cdot {\bf n}, \, \w_h )_\Sigma 
 = 
  {\cal E}_s^n(\w_h) - F^n_s(\w_h), \quad \forall \w_h \in \S_h^r 
\label{err-e1} 
\\ 
& \delta_\eta^n = \delta_\eta^{n-1} 
+ \tau \delta_s^n+ R_h \mathcal{T}_0^n, \qquad  \mbox{on } \Sigma  
\label{err-e2} \\[15pt]
& \rho_f  \left ( \frac{\delta_u^n-\delta_s^n}{\tau}, \, \bv_h \right )   + a_f(\delta_u^n, \, \bv_h) - b(\delta_p^n, \, \bv_h^n) 
+ b(q_h, \, \delta_u^n) 
 + \rho_s \epsilon_s \left ( \frac{\delta_u^n - \delta_s^n}{\tau}, \, \bv_h \right )_\Sigma  
 \nn \\ 
 & = ( \delta_\sigma^{n-1}\cdot \n, \, \bv_h )_\Sigma 
 - ( \delta_u^n - \delta_s^n, \, \sig(\bv_h, \, q_h) )_\Sigma 
 - \frac{\tau(1+\beta)}{\rho_s \epsilon_s}  ( (\delta_\sigma^n - \delta_\sigma^{n-1})\cdot \n,  \, \sig(\bv_h, q_h) \cdot \n )_\Sigma 
\nn \\ 
& \quad + {\cal E}_f^n(\bv_h, q_h) - F_f^n(\bv_h, q_h), 
\hskip1in  \forall (\bv_h, q_h)\in \X_h^r\times Q_h^{r-1}
\label{err-e3} 
 \end{align} 
where 
\begin{align} 
F^n_s(\w_h)&  =   \rho_s \epsilon_s ( D_\tau \theta_u^n, \, \w_h )_\Sigma 
+  a_s(\theta_\eta^n, \, \w_h)  + ( \theta_\sigma^{n-1}\cdot {\bf n}, \, 
\w_h )_\Sigma 
\label{Fs} 
\\ 
 F_f^n(\bv_h, q_h)  & =  
 \rho_f(D_\tau \theta_u^n, \bv_h)+a_f(\theta_u^n, \, \bv_h) - b(\theta_p^n, \, \bv_h) 
 \nn \\ 
 &\quad\, - ( \theta_\sigma^{n-1}\cdot \n, \, \bv_h )_\Sigma 
 +\frac{\tau(1+\beta)}{\rho_s \epsilon_s}  ( (\theta_\sigma^n - \theta_\sigma^{n-1})\cdot \n, \sig(\bv_h, \, q_h) \cdot \n )_\Sigma 
 \label{Ff}
\end{align} 
Moreover, we have the following result:
\begin{align*}
 	\theta_\eta^n=\theta_\eta^{n-1}+\tau\theta_s^n+(\mathcal{T}_0^n-R_h \mathcal{T}_0^n), 
\end{align*} 
where the last term can be estimated by using \eqref{SR-error-2}, i.e., 
\begin{align} \label{T0n-RhT0n}
 &\|\mathcal{T}_0^n- R_h \mathcal{T}_0^n\|_{H^1(\Sigma)}\leq C\tau^2\|\partial_t(R_h\u-\u)\|_{L^\infty H^1(\Sigma)}\leq C\tau^2h^r . 
\end{align}
Therefore, by the triangle inequality with estimates \eqref{Estimate-T0n} and \eqref{T0n-RhT0n}, we have 
\begin{align} 
\| R_h \mathcal{T}_0^n\|_{H^1(\Sigma)} 
\le 
\| \mathcal{T}_0^n \|_{H^1(\Sigma)}
+\| \mathcal{T}_0^n-  R_h \mathcal{T}_0^n\|_{H^1(\Sigma)} \leq C\tau^2\quad\forall n\geq 1
\label{err-theta} 
\end{align} 
 
We take $(\bv_h, q_h) = (\delta_u^n, \delta_p^n) \in \X_h^r\times Q_h^{r-1}$ in \refe{err-e3} and $\w_h =\delta_s^{n} 
 \in \S_h^r$ in \refe{err-e1}, respectively, and then sum up the two results. Using the stability analysis in \refe{stability-2} and the relation 
 \begin{align*}
 	\delta_s^n=D_\tau\delta_\eta^n-\tau^{-1} R_h \mathcal{T}_0^n , 
 \end{align*}
we obtain 
\begin{align} 
&D_\tau E_0( \delta_u^n, \delta_p^n, \delta_\eta^n) 
+ E_1( \delta_u^n, \delta_s^n, \delta_\eta^n)\nn \\
&\le {\cal E}_s^n(\delta_s^n ) 
 - F_s^n(\delta_s^n) 
+ {\cal E}_f^n(\delta_u^n, \delta_p^n) - F_f^n(\delta_u^n,\delta_p^n)+\tau^{-1}a_s(\delta_\eta^n, R_h \mathcal{T}_0^n)\, . 
\label{error-1}
\end{align} 
To establish the error estimate,  we need to estimate each term on the right-hand side of \eqref{error-1}. 
From \refe{EE} and \eqref{err-theta} we can see that 
\begin{align} 
\begin{array}{ll} 
& |{\cal E}^n_s(\delta_s^n)| \le C\tau \|\delta_s^n \|_\Sigma 
 \\ [5pt]
&|  {\cal E}^n_f(\delta_u^n, \, \delta_p^n) | \le  C \tau (\| \delta_u^n \|_\Sigma+\|\delta_u^n\|) 
+ \tau^2 \| \delta_\sigma^n \cdot \n \|_\Sigma
\\ [5pt]
&|\tau^{-1}a_s(\delta_\eta^n, \, R_h\mathcal{T}_0^n)|\leq C\tau\|\delta_\eta^n\|_{s}
\end{array} 
\label{EE-1} 
\end{align} 
It remains to estimate $ F_s^n(\delta_s) + F_f^n(\delta_u,\delta_p)$ from the right-hand side of \eqref{error-1}. 

\begin{enumerate}
\item The second term in \eqref{Fs} plus the second and third terms in \eqref{Ff} can be estimated as follows. 
Let $\xi_h^n: = \delta_u^n - E_h(\delta_u^n - \delta_s^n)$, where $E_h(\delta_u^n-\delta_s^n)$ is an extension of $\delta_u^n-\delta_s^n$ to $\Omega$ satisfying estimate \eqref{Eh-estimate} 
	and $\xi_h^n|_{\Sigma} = \delta_s^n$. By choosing $v_h=\xi_h^n$ and $q_h=0$ in \eqref{evolution-ritz} (definition of the coupled Ritz projection), we obtain the following relation:  
	\begin{align}
		& a_f(\theta_u^{n}, \delta_u^{n}) - b(\theta_p^{n},\delta_u^{n})
		+ a_s(\theta_\eta^n,\delta_s^n)
		\nn \\
	 &=  a_f(\theta_u^n, E_h(\delta_u^n-\delta_s^n))  
	- b(\theta_p^n,E_h(\delta_u^n-\delta_s^n)) 
	-(\theta_u^n, \xi_h^n)-(\theta_s^n,\delta_s^n)_\Sigma
		\nn\\
		&\lesssim Ch^r\|E_h(\delta_u^n - \delta_s^n)\|_f 
		+ Ch^{r+1}(\|\xi_h^n\|+\|\delta_s^n \|_\Sigma)\nn\\
		&\leq  Ch^{r-1/2}\|\delta_u^n - \delta_s^n\|_\Sigma 
		+ Ch^{r+1}(\|\delta_u^n\|+\|\delta_s^n\|_\Sigma),
	\end{align}	
	where we have used estimate \eqref{SR-error-1}--\eqref{SR-error-2}.

	\item The third term in \eqref{Fs} plus the fourth term in \eqref{Ff} can be estimated as follows: 
	\begin{align}
		&(\theta_\sigma^{n-1}\cdot \n,\delta_s^n)_\Sigma-(\theta_\sigma^{n-1}\cdot\n, \delta_u^n )_\Sigma \nn\\
		\lesssim & \|\theta_\sigma^{n-1}\cdot \n\|_\Sigma \|\delta_s^n - \delta_u^n \|_{\Sigma}\nn\\
		\leq & C(h^{r-1/2}+h^{-1/2}(\|\theta_u^{n-1}\|_{H^1}
		+\|\theta^{n-1}_p\|))\|\delta_s^n - \delta_u^n \|_{\Sigma}\nn\\
		\leq & Ch^{r-1/2}\|\delta_s^n - \delta_u^n \|_{\Sigma},
	\end{align}
	where we used \eqref{sig-error-inverse} in the second inequality and 
	 \eqref{SR-error-1} in the last inequality.

	\item For the first term in \eqref{Fs} and \eqref{Ff}, respectively, we have 
	\begin{align}
			&\rho_s \epsilon_s
			 ( D_\tau \theta_u^n, \, \delta_s^n )_\Sigma  
			 \leq \frac{C}{\tau}\|\delta_s^n\|_\Sigma\int_{t_{n-1}}^{t_n}\|\partial_t \theta_u(t)\|_\Sigma dt \\
			& \rho_f(D_\tau \theta_u^n, \delta_u^n )\leq \frac{C}{\tau}
			\|\delta_u^n \|\int_{t_{n-1}}^{t_n}\|\partial_t \theta_u(t) \| dt
	\end{align}
	\item 
	The last term in \eqref{Ff} can be estimated by using \eqref{SR-error-2} and \eqref{sig-error-inverse}, i.e., 
	\begin{align}\label{EE-5}
		&\frac{\tau}{\rho_s \epsilon}  ( (\theta_\sigma^n - \theta_\sigma^{n-1})\cdot \n, \sig(\delta_u^n, \, \delta_p^n) \cdot \n )_\Sigma \nn\\
		\le & C\tau\left(\int_{t_{n-1}}^{t_n}\|\sig(\partial_t \theta_u, \, \partial_t \theta_p)(t)\n\|_\Sigma dt\right)\|\sig(\delta_u^n, \, \delta_p^n)\cdot \n\|_\Sigma\nn\\
	\leq & C\tau^2h^{r-1/2}\|\sig(\delta_u^n, \, \delta_p^n) \cdot \n\|_\Sigma 
	\end{align}
\end{enumerate} 
Now we can substitute estimates \eqref{EE-1}--\eqref{EE-5} into the energy inequality in \eqref{error-1}. This yields the following result: 
\begin{align}\label{final-estimate-1}
	&D_\tau E_0( \delta_u^n, \delta_p^n, \delta_\eta^n) 
	+ E_1( \delta_u^n, \delta_s^n, \delta_\eta^n)\nn \\
	&\le C\tau (\|\delta_s^n \|_\Sigma+\| \delta_u^n \|_\Sigma+\|\delta_u^n\|+\|\delta_\eta^n\|_s)+Ch^{r-1/2}\|\delta_u^n-\delta_s^n\|_\Sigma+Ch^{r+1}(\|\delta_u^n\|+\|\delta_s^n\|_\Sigma)\nn\\
	&\quad\, +\frac{C}{\tau}\|\delta_s^n\|_\Sigma\int_{t_{n-1}}^{t_n}\|\partial_t \theta_u(t) \|_\Sigma dt +\frac{C}{\tau}\|\delta_u^n\|\int_{t_{n-1}}^{t_n}\|\partial_t \theta_u(t) \| dt
	+C\tau^2\|\delta_\sigma^n \cdot \n\|_\Sigma	\, .
\end{align} 
Since $\|\delta_s^n \|_\Sigma \le \|\delta_s^n - \delta_u^n\|_\Sigma + \|\delta_u^n \|_\Sigma$, by using Young's inequality, we can re-arrange the right hand side of \eqref{final-estimate-1} to obtain
\begin{align}\label{final-estimate-2}
		&D_\tau E_0( \delta_u^n, \delta_p^n, \delta_\eta^n) 
	+ E_1( \delta_u^n, \delta_s^n, \delta_\eta^n)\nn \\
	&\le C\varepsilon^{-1}(\tau^2+Ch^{2(r+1)}+\tau h^{2r-1})+C \varepsilon (\| \delta_u^n \|^2_\Sigma+\|\delta_u^n\|^2+\|\delta_\eta^n\|^2_s)+\frac{C\varepsilon}{\tau}\|\delta_u^n-\delta_s^n\|^2_\Sigma\nn\\
	&\quad\, +\frac{C\varepsilon^{-1}}{\tau} 
	 \left ( 
	\int_{t_{n-1}}^{t_n}\|\partial_t \theta_u (t)\|^2_\Sigma dt + 
	\int_{t_{n-1}}^{t_n}\|\partial_t \theta_u(t)\|^2 dt \right ) 
	+C\tau^2\|\delta_\sigma^n \cdot \n\|^2_\Sigma	
\end{align}
where $0<\varepsilon<1$ is an arbitrary constant.


We can choose a sufficiently small $\varepsilon$ so that the term $\frac{C\varepsilon}{\tau}\|\delta_u^n-\delta_s^n\|^2_\Sigma$ can be absorbed by $E_1( \delta_u^n, \delta_s^n, \delta_\eta^n)$ on the left-hand side. Then, using the discrete Gronwall's inequality and the estimates of $\theta_u$ in \refe{SR-error-3}, as well as the definition of $E_0$ and $E_1$ in \refe{E0}--\refe{E1}, we obtain 
\begin{align}\label{end-estimate1}
	 E_0( \delta_u^n, \delta_p^n, \delta_\eta^n) 
	+\sum_{m=1}^n \tau E_1( \delta_u^m, \delta_s^m, \delta_\eta^m)\leq C E_0( \delta_u^0, \delta_p^0, \delta_\eta^0)+ C(\tau^2+Ch^{2(r+1)}+\tau h^{2r-1}),
\end{align}
Since the initial values satisfy the estimates in \eqref{initial-value}, the term $E_0( \delta_u^0, \delta_p^0, \delta_\eta^0)$ can be estimated to the optimal order. Thus inequality \eqref{end-estimate1} reduces to 
	\begin{align}\label{thm-3.2-tmp1}
		&\|\delta_u^n\|+\|\delta_u^n\|_{\Sigma}+\|\delta_\eta^n\|_s+\|\delta_u^n-\delta_s^n\|_\Sigma 
	\le 	C (h^{r - 1 / 2} \tau^{1 / 2} + \tau + h^{r + 1}).
	\end{align}
It follows from the relation $\delta_\eta^n = \delta_\eta^{n-1} + \tau \delta_s^n+R_h\mathcal{T}_0^n$, $n\ge 1$, that
\begin{align}
	\|\delta_\eta^n\|_\Sigma\leq &\|\delta_\eta^0\|_\Sigma+\sum_{m=1}^n\tau \|\delta_s^m\|_\Sigma+\sum_{m=1}^n\|R_h\mathcal{T}_0^n\|_\Sigma
	\leq 	C (h^{r - 1 / 2} \tau^{1 / 2} + \tau + h^{r + 1}),
\end{align}
where we have used \eqref{thm-3.2-tmp1} and \eqref{err-theta}. 
Then, combining the two estimates above with the following estimate for the projection error: 
\begin{align*}
	\|\theta_u^n\|+\|\theta_u^n\|_\Sigma+\|\theta_\eta^n\|_\Sigma\leq Ch^{r+1}\quad \forall n\geq 0,
\end{align*}
we obtain the following error bound:
\begin{align*}
	&\|e_u^n\|+\|e_u^n\|_{\Sigma}+\|e_\eta^n\|_\Sigma
	\le C (h^{r - 1 / 2} \tau^{1/2} + \tau + h^{r + 1}) 
	\le C(\tau + h^{r+1}) \, ,
\end{align*}
where the last inequality uses $h^{r - 1 / 2} \tau^{1/2} \le \tau + h^{2r-1}$ and $r\ge 2$. 
This completes the proof of Theorem \ref{main-2}. \hfill\endproof
%
%
\section{The proof of Theorem \ref{main-3}} \label{sec-ritz}


We present the proof of the Theorem \ref{main-3} step-by-step in the next three subsections.

\subsection{The definition of $R_h \e(0)$ in the coupled Ritz projection}  \label{sec-ritz-1}

In this subsection, we focus on designing the initial value $R_h \e(0)$ for our coupled non-stationary Ritz projection.

We first present two auxiliary Ritz projections $R_h^S$ and $R_h^D$ associated to the structure model and fluid model in Definitions \ref{ritz-solid}-\ref{ritz-fluid-dirichlet-stokes}, respectively.
Next, in terms of these two auxiliary Ritz projections, we define the initial value $R_h \e(0)$ in Definition \ref{def-Rhe0} which is only for our theoretical purpose. Finally, an alternative definition of $R_h \e(0)$ for practical computation is given in Definition \ref{def-Rshe0}.

\begin{definition}[Structure--Ritz projection $R_h^S$]\label{ritz-solid}
\upshape
	We define an auxiliary Ritz projection $R_h^S:\S\to \S_h^r$ for the elastic structure problem by 
	\begin{equation}\label{RhS-def}
		a_s(R_h^S\s-\s,\w_h)+(R_h^S\s-\s,\w_h)_\Sigma=0\quad\forall \w_h\in \S_h^r.
	\end{equation}
This is the standard Ritz projection on $\Sigma$, which satisfies the estimate 
$\|R_h^S\s-\s\|_\Sigma\leq Ch^{r+1}$ when $\s$ is sufficiently smooth. Moreover when $r\geq 2$, there holds negative norm estimate
	\begin{equation}\label{Rhs-neg}
		\|R_h^S\s-\s\|_{H^{-1}(\Sigma)}\leq Ch^{r+2}.
	\end{equation}
\end{definition} 

Let $\mathring \X^r_h := \{ \bv_h \in \X^r_h : \bv_h |_{\Sigma} = 0 \}$ and $Q_{h,0}^{r-1}:=\{q_h\in Q_h^{r-1}:q_h\in L^2_0(\Omega)\}$. We denote  $\widetilde{\S}_h^r:=\{\bv_h\in \S_h^r:(\bv_h,\n)_\Sigma=0\}$ and 
	 by $\widetilde{P}$ the $L^2(\Sigma)$-orthogonal projection from $\S_h^r$ to $\widetilde{\S}_h^r$.

\begin{definition}[Dirichlet Stokes--Ritz projection $R_h^D$]\label{ritz-fluid-dirichlet-stokes}
\upshape
Let $\hat{\X}:=\{\u\in \X:\u|_\Sigma\in \S\}$. 
	We define an auxiliary Dirichlet Stokes--Ritz projection $R^D_h : \hat{\X} \times Q
	\rightarrow \X^r_h \times Q^{r-1}_h$ by 
	\begin{subequations}\label{RhD-def}
		\begin{align}
			&a_f( \u - R^D_h\u, \bv_h) - b( p - R^D_hp, \bv_h) + (\u -
			R_h^D\u, \bv_h) =  0 \quad \forall \bv_h \in \mathring \X_h^r , \label{RhD-def1}\\
			&b(q_h, \u - R_h^D\u) =  0 \quad \forall q_h \in Q^{r-1}_{h,0};\quad 
			R_h^D \u  =  \widetilde{P} R_h^S(\u|_\Sigma) \quad \text{on $\Sigma$},\label{RhD-def2}
		\end{align}
	\end{subequations}
In addition, we choose $R_h^D p$ to satisfy $R_h^D p - p \in L^2_0 (\Omega)$. This uniquely determines a solution $(R_h^Du,R_h^Dp)\in \X^r_h \times Q^{r-1}_h$, as explained in the following Remark.
\end{definition}

\begin{remark}\upshape 
In order to see the existence and uniqueness of solution $(R_h^Du,R_h^Dp)$ defined by \eqref{RhD-def}, 
we let $\hat{\u}_h\in \X_h^r$ be an extension of $\widetilde{P}R_h^S\u$ to the bulk domain $\Omega$ and let $\hat{p}_h$ be the $L^2(\Omega)$-orthogonal projection of $p$ onto $Q_h^{r-1}$. Then $\hat{\u}_h-R_h^D\u\in \mathring{\X}_h^r$ and $\hat{p}_h-R_h^Dp\in Q^{r-1}_{h,0}$.
Replacing $(\u,p)$ and $(R_h^D\u, R_h^Dp)$ by  $(\u-\hat{\u}_h, p- \hat{p}_h)$ and  
$(R_h^D\u-\hat{\u}_h,R_h^Dp - \hat{p}_h)$ in \eqref{RhD-def1}-\eqref{RhD-def2} respectively,
we obtain a standard Stokes FE system with a homogeneous Dirichlet boundary condition for $(R_h^D\u-\hat{\u}_h,R_h^Dp - \hat{p}_h)$. The well-posedness directly follows the inf-sup condition \eqref{inf-sup-condition0}. 
\end{remark}

\begin{remark}\label{remark-4-2-div-free}\upshape 
The projection $\widetilde{P}$ in \eqref{RhD-def2} is introduced to guarantees that the $b(q_h, \u - R_h^D\u) =  0$ holds not only for $q_h\in Q_{h,0}^{r-1}$ but also for $q_h\in Q_h^{r-1}$. That is,
\begin{equation}\label{full-div-free}
		b(q_h, \u - R_h^D\u) =  0 \quad \forall q_h \in Q^{r-1}_{h}.
	\end{equation}
Since $Q_h^{r-1}=\{1\}\oplus Q^{r-1}_{h,0}$, this follows from the first relation in \eqref{RhD-def2} and the following relation: 
\begin{equation*}
		b(1,\u-R_h^D\u)=(R_h^D\u,\n)_\Sigma= (\widetilde{P}R_h^S\u,\n)_\Sigma=0 \, ,
	\end{equation*}
where $b(1,\u)=0$ for the exact solution $\u$ which satisfies $\nabla\cdot\u=0$. 

Especially, when $\u$ is replaced with $\partial_t\u(0)$, we have
\begin{equation}\label{RhD-partial_tu-q}
		 	b(q_h,(\partial_t\u-R_h^D\partial_t\u)(0))=0\quad\forall q_h\in Q_h^{r-1} . 
\end{equation}
Relation \eqref{RhD-partial_tu-q} is needed in estimating the error between $(\partial_tR_h\u(0),\partial_tR_hp(0))$ and $(\partial_t\u(0),\partial_tp(0))$ in Lemma \ref{time-derivative-ritz-u-p} below. Furthermore, in Definition \ref{def-Rhe0}, we defined $(R_h\u(0),R_hp(0))$ via a Dirichlet-type Stokes-Ritz projection with the boundary condition $R_h\u(0)|_\Sigma=\widetilde{P}R_{sh}\u(0)$. 

To facilitate further use of $\widetilde{P}$ in the following analysis, here we derive an explicit formula for $\widetilde{P}$. We denote by $\n_h \in \S^r_h$ the $L^2 (\Sigma)$-orthogonal projection of unit normal
	vector field $\n$ of $\Sigma$ to $\S^r_h$, i.e.,
	\begin{equation}\label{nh-def}
		(\n, \w_h)_{\Sigma} = (\n_h, \w_h)_{\Sigma} \quad \forall \w_h \in \S^r_h .
	\end{equation}
Then for any $\w_h \in \S^r_h$, we have
	\begin{equation}\label{P-tilde-formula0}
		\widetilde{P} \w_h = \w_h - \lambda (\w_h) \n_h \in \tilde{\S}^r_h \quad  \text{ with } \lambda (\w_h):= \frac{(\w_h, \n)_{\Sigma}}{\| \n_h \|^2_{\Sigma}}.
	\end{equation}
	From $\|\n-\n_h\|_\Sigma\leq \|\n-I_h\n\|_\Sigma\leq Ch^{r+1}$ (since $\n$ is smooth on $\Sigma$), especially we have $\|\n_h\|_\Sigma\sim C$ and  
	\begin{equation}\label{lambda-RhS-u}
	\lambda(R_h^S\u)=\frac{(R_h^S\u-\u,\n)_\Sigma}{\|\n_h\|^2_\Sigma} 
	\lesssim Ch^{r+1}\text{ and }\|\widetilde{P}R_h^S\u-R_h^S\u\|\leq Ch^{r+1}.
	\end{equation}
	Therefore we obtain the estimate  $\|R_h^D\u-\u\|_{\Sigma}\leq Ch^{r+1}$.
\end{remark}
The following lemma on the error estimates of the Dirichlet Stokes--Ritz projection is standard. We refer to \cite[Proposition 8, Proposition 9]{Gunzburger} for the proof of \eqref{RhD-error}. The negative norm estimate of pressure in \eqref{RhD-p-neg-err} requires a further duality argument, which is presented in the proof of 
Lemma B.3 in the Appendix B.
 We omit the details here.  
\begin{lemma}\label{lemma:Dirichlet-Ritz}
Under the regularity assumptions in Section \ref{section:regularity}, the Dirichlet Stokes--Ritz projection $R_h^D$ defined in \eqref{RhD-def} satisfies the following estimates: 
\begin{align}\label{RhD-error}
	&\|\u-R_h^D\u\|_{\Sigma} + \| \u - R_h^D \u \| + h \left ( \|\u - R_h^D \u\|_{H^1}
	+ \| p - R_h^D p \|\right ) \le C h^{r+1}  	,
%
\\
	\label{RhD-p-neg-err}
&		\| R_h^D p - p \|_{H^{- 1}} \leq C h^{r + 1} .
	\end{align}
\end{lemma}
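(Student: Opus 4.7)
The plan is to split the estimate into three independent pieces: the boundary trace in $L^2(\Sigma)$, the energy-norm bounds on $\u-R_h^D\u$ and $p-R_h^D p$, and finally the negative-norm bound on the pressure. Each piece uses a different tool, so I would address them in order of increasing difficulty.

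First, I would dispose of the $L^2(\Sigma)$ estimate directly from the boundary condition in \eqref{RhD-def2}. Since $R_h^D\u|_\Sigma=\widetilde{P}R_h^S(\u|_\Sigma)$, the triangle inequality gives
\[
\|\u-R_h^D\u\|_\Sigma\le\|\u-R_h^S\u\|_\Sigma+\|R_h^S\u-\widetilde{P}R_h^S\u\|_\Sigma,
\]
and both terms are already controlled by $Ch^{r+1}$ by Definition \ref{ritz-solid} and the second inequality in \eqref{lambda-RhS-u}. This is a free step.

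For the interior energy-norm bounds, I would reduce to a standard Stokes problem with homogeneous Dirichlet data. Let $\hat{\u}_h\in\X_h^r$ be the interpolant-extension of $\widetilde{P}R_h^S(\u|_\Sigma)$ into the bulk (built from assumption \textbf{(A3)} and the trace theorem), and $\hat p_h=I_h^Qp$. Then $R_h^D\u-\hat{\u}_h\in\mathring{\X}_h^r$ and $R_h^Dp-\hat p_h\in Q_{h,0}^{r-1}$, and subtracting the appropriate quantities in \eqref{RhD-def1}--\eqref{RhD-def2} yields a symmetric discrete Stokes system with a zero-order perturbation. Applying the inf-sup condition \eqref{inf-sup-condition0} to this reduced system in the standard Brezzi framework gives
\[
\|\u-R_h^D\u\|_{H^1}+\|p-R_h^Dp\|\le C\bigl(\|\u-\hat{\u}_h\|_{H^1}+\|p-\hat p_h\|\bigr)\le Ch^r.
\]
The $L^2$ bound $\|\u-R_h^D\u\|\le Ch^{r+1}$ then follows by an Aubin--Nitsche argument: solve the dual Stokes problem with datum $\u-R_h^D\u$, zero Dirichlet data on $\partial\Omega$, and use the regularity \eqref{reg-ass2} with $k=3/2$ plus Galerkin orthogonality on interior test functions. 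A mild care is needed since $\u-R_h^D\u\ne 0$ on $\Sigma$, but the residual boundary integral $\int_\Sigma\sig(\z,\pi)\n\cdot(\u-R_h^D\u)$ is controlled by $\|\sig(\z,\pi)\n\|_\Sigma\cdot h^{r+1}\le C\|\u-R_h^D\u\|\cdot h^{r+1}$ and absorbed.

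The hard part, and the novel one, is the negative-norm estimate \eqref{RhD-p-neg-err}. Given $\phi\in H^1(\Omega)$ with $\int_\Omega\phi=0$, I would solve the auxiliary Stokes problem
\[
-\Delta\bv+\nabla s=0,\qquad \nabla\cdot\bv=\phi\ \text{in}\ \Omega,\qquad \bv|_{\partial\Omega}=0,
\]
which by \eqref{reg-ass2} admits a unique solution $(\bv,s)\in H^2(\Omega)^d\times H^1(\Omega)$ with $\|\bv\|_{H^2}+\|s\|_{H^1}\le C\|\phi\|_{H^1}$. Since $\bv$ vanishes on the entire boundary, its Scott--Zhang interpolant $\bv_h$ lies in $\mathring{\X}_h^r$, and so it is admissible in \eqref{RhD-def1}. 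Writing
\[
(p-R_h^Dp,\phi)=b(p-R_h^Dp,\bv)=b(p-R_h^Dp,\bv-\bv_h)+a_f(\u-R_h^D\u,\bv_h)+(\u-R_h^D\u,\bv_h),
\]
I would split the right-hand side: the first term is $\lesssim\|p-R_h^Dp\|\cdot h\|\bv\|_{H^2}\le Ch^{r+1}\|\phi\|_{H^1}$; the bilinear $a_f(\u-R_h^D\u,\bv_h)$ and $(\u-R_h^D\u,\bv_h)$ are rewritten as $a_f(\u-R_h^D\u,\bv)+(\u-R_h^D\u,\bv)$ plus interpolation remainders, and then integrated by parts back onto $\bv$ using the strong form $-\Delta\bv+\nabla s=0$ to recover the factor $\|\u-R_h^D\u\|\le Ch^{r+1}$. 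The tricky point is the boundary integral produced by integration by parts: since $\bv$ vanishes on $\Sigma$ it disappears, but the $\sig(\bv,s)\n$ trace applied against $\u-R_h^D\u|_\Sigma$ must be controlled — this is where one uses $\|\u-R_h^D\u\|_{H^{-1}(\Sigma)}\le Ch^{r+2}$, which in turn requires the negative-norm estimate \eqref{Rhs-neg} of the solid Ritz projection (hence the use of $R_h^S$ rather than $I_h$ in the boundary condition). Collecting all contributions and taking the supremum over $\phi$ yields \eqref{RhD-p-neg-err}. This negative-norm bound on the boundary trace will, I expect, be the main technical obstacle and explains the slightly intricate setup of $R_h^D$.
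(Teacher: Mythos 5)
The proof of \eqref{RhD-error} in your outline is the standard approach and is in line with what the paper delegates to \cite{Gunzburger}: triangle inequality and properties of $R_h^S$ and $\widetilde P$ for the boundary $L^2$ bound, reduction to a homogeneous-Dirichlet discrete Stokes system plus the inf--sup condition for the energy-norm bounds, and Aubin--Nitsche with \eqref{reg-ass2} for the interior $L^2$ bound. Your handling of the non-matching boundary data in the duality step for $\|\u-R_h^D\u\|$ is correct in spirit.

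For \eqref{RhD-p-neg-err} you use a dual Stokes problem where the paper uses a Bogovski divergence lift (that is, a $\bv$ with $\nabla\cdot\bv=\phi$, $\bv|_\Sigma=0$, $\|\bv\|_{H^2}\le C\|\phi\|_{H^1}$, with no pressure variable); both routes work. But your final step contains a genuine over-reach that would actually introduce a gap. You assert that the boundary pairing $(\u-R_h^D\u,\DD\bv\cdot\n)_\Sigma$ must be controlled through $\|\u-R_h^D\u\|_{H^{-1}(\Sigma)}\le Ch^{r+2}$, which in turn rests on \eqref{Rhs-neg} and therefore on $r\ge 2$. That premise is not needed for the rate $Ch^{r+1}$ claimed in \eqref{RhD-p-neg-err}: since $\|\bv\|_{H^2}\le C\|\phi\|_{H^1}$, the trace theorem gives $\|\DD\bv\cdot\n\|_\Sigma\le C\|\bv\|_{H^2}\le C\|\phi\|_{H^1}$, and so the crude $L^2(\Sigma)$ pairing already yields
\[
|(\u-R_h^D\u,\DD\bv\cdot\n)_\Sigma|\le\|\u-R_h^D\u\|_\Sigma\,\|\DD\bv\cdot\n\|_\Sigma\le Ch^{r+1}\|\phi\|_{H^1},
\]
with no negative-norm estimate on the boundary, no invocation of \eqref{Rhs-neg}, and no restriction to $r\ge 2$. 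The boundary negative-norm estimate, and the motivation for taking $R_h^S$ rather than $I_h$ in \eqref{RhD-def2}, arise only for the strictly stronger $Ch^{r+2}$ estimates in Lemmas \ref{Lemma-B3-RhD} and \ref{lemma:B5} (where the duality is run with $\phi\in H^{3/2}$, $\bv\in H^{5/2}$). Invoking that machinery here both over-complicates the argument and would, as written, limit \eqref{RhD-p-neg-err} to $r\ge 2$, whereas Lemma \ref{lemma:Dirichlet-Ritz} is stated without that restriction.
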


We define an initial value $R_h\e(0)$ as follows in terms of the Dirichlet Ritz projection $R_h^D$. 
\begin{definition}[Initial value $R_h\e(0)$]\label{def-Rhe0}\upshape 

Firstly, assuming that the function $(R_h^D\partial_t \u(0),R_h^D\partial_t p(0))$ is known with operator $R_h^D$ defined by \eqref{RhD-def}, we define $R_{{sh}} \u (0) \in \S_h^r$ to be the solution of  the following weak formulation: 
	\begin{align}\label{ritz-initial1} 
		& a_s (_{} ( \u - R_{{sh}} \u) (0), \w_h)  + (( \u - R_{{sh}} \u)
		(0), \w_h)_{\Sigma} + a_f (( \partial_t \u - R_h^D\partial_t \u) (0), E_h\w_h)\nonumber\\
		&- b (( \partial_t p - R_h^D\partial_t p) (0), E_h\w_h) + ((
		\partial_t \u - R^D_h\partial_t \u) (0), E_h\w_h) = 0 \quad \forall \w_h \in {\bf S}^r_h,
	\end{align}
where $E_h\w_h$ denotes an extension of $\w_h$ to the bulk domain $\Omega$. From the definition of $R_h^D$ in \eqref{RhD-def} we can conclude that this definition is independent of the specific extension. Therefore, \eqref{ritz-initial1} still holds when replacing both  $\w_h$ and $E\w_h$ with $\bv_h \in {\bf X}^r_h$. 

Secondly, we denote by $(R_h \u (0),R_hp(0)) \in {\bf X}^r_h\times Q_h^{r-1}$ a Dirichlet-type Stokes--Ritz projection satisfying  
	\begin{subequations}\label{d3-2a2b}
		\begin{align}
			a_f(\u(0) - &R_h \u(0), \bv_h)  - b( p(0)-R_hp(0), \bv_h)  +(\u(0)-R_h\u(0),\bv_h)=0&& \forall \bv_h \in \mathring \X^r_h , 
			\label{d3-2a} \\
			&b (q_h, \u(0) - R_h \u(0)) = 0 \quad \forall q_h \in Q^{r-1}_{h,0};\quad 	R_h  \u (0) = \widetilde{P}  R_{s h} \u (0) && \text{on $\Sigma$} , 
			\label{d3-2b}
		\end{align}
	\end{subequations}
where we require $p (0) - R_h p (0) \in L^2_0 (\Omega)$. 

Finally, with the $R_h\u(0)$ and $R_hp(0)$ defined above, we define $R_h \e (0) \in \S_h^r$ to be the solution of the following weak formulation on $\Sigma$:
	\begin{align}\label{ritz-initial2}
a_f(\u(0) - R_h \u(0), E_h\w_h) & - b( p(0)-R_hp(0), E_h\w_h)  +(\u(0)-R_h\u(0), E_h\w_h)
\nn \\ 
& + a_s(\e(0) - R_h \e(0), \w_h) + ( \e(0) - R_h \e(0), \w_h )_\Sigma = 0 
		 \quad \forall \w_h \in \S^r_h.
	\end{align}	
Again  \eqref{ritz-initial2} also holds when replacing $\w_h$ and $E_h\w_h$  with $\bv_h \in {\bf X}^r_h$. 
\end{definition}

In addition, by differentiating \eqref{evolution-ritz} with respect to time, we have the following evolution equations:
	\begin{subequations}\label{evolution-ritz-t1}
		\begin{align}
			a_s ( \u - R_h\u, \bv_h)  + ( \u - R_h \u, \bv_h)_{\Sigma} + a_f (\partial_t ( \u
			- R_h\u), \bv_h) &\nn\\
			- b (\partial_t ( p - R_hp), \bv_h) + (\partial_t ( \u - R_h\u),
			\bv_h) &= 0 \quad \forall \bv_h \in \X^r_h , \label{evolution-ritz-t1-u-eta} \\[5pt]
	b (q_h, \partial_t ( \u - R_h\u)) &= 0 \quad \forall q_h \in Q^{r-1}_h \, . 
	\label{evolution-ritz-t1-p}
		\end{align}
	\end{subequations}
For the computation with the numerical scheme \eqref{s1}--\eqref{s2}, we can define the initial value $\e_h^0 =R_{sh}\e(0)\in \S^r_h$ in an alternative way below.

\begin{definition}[Ritz projection $R_{sh}\e(0)$]\label{def-Rshe0}\upshape
We define $\e_h^0 =R_{sh}\e(0)\in \S^r_h$ as the solution of the following weak formulation: 
\begin{align}
	&a_s ((R_{sh} \e - \e) (0), \w_h)  + ((R_{sh} \e - \e) (0),\w_h)_{\Sigma}
\qquad \forall \w_h \in \S^r_h \nn \\ 
	& = - a_f ((R_h^D \u - \u) (0), E_h \w_h)
	 + b ((R_h^D p - p) (0), E_h\w_h) - ((R_h^D\u - \u)(0), E_h \w_h) 
	  \, ,
	  \label{R_sh-eta} 
\end{align}
which does not require knowledge of $\partial_t\u(0)$ or $\partial_tp(0)$. Again, $E_h\w_h$ denotes an extension of $\w_h$ to the bulk domain $\Omega$, and this definition is independent of the specific extension. Therefore, \eqref{R_sh-eta} holds for
all $\bv_h \in {\bf X}^r_h$ with $\w_h$ and $E_h\w_h$ replaced by $\bv_h$ in the equation. For $r\geq 2$, the following result can be proved in the Appendix B
:
	\begin{equation}\label{super-approx}
		\|R_{sh}\e(0)- R_h \e(0)\|_{H^{1}(\Sigma)}  \leq Ch^{r+1}. 
	\end{equation}  

\end{definition}

%
%
\subsection{Error estimates for the coupled Ritz projection at $t=0$} \label{t:0 error-estimate}

Firstly, we consider the estimation of $R_{sh}\u(0)$ which occurs as an auxiliary function in the definition of $R_h\e(0)$ in Lemma \ref{Lemma:Rshu0}. Secondly, we present estimates for $\u(0)-R_h\u(0)$, $\e(0)-R_h\e(0)$ and $ p(0) - R_h p(0)$ in Lemma \ref{lemma:error-Rh-t=0}. Finally, we present estimates for the time derivatives $\partial_t (\u - R_h \u) (0)$ and $\partial_t ( p - R_hp) (0)$ in
Lemma \ref{time-derivative-ritz-u-p}.

\begin{lemma}\label{Lemma:Rshu0}
Under the assumptions in Sections \ref{section:regularity} and \ref{FE-space}, the following error estimate holds for the $R_{sh}\u(0)$ defined in \eqref{ritz-initial1}$:$ 
	\begin{align}
		\quad \| R_{{sh}} \u (0) - \u (0) \|_{\Sigma} + h \| R_{{sh}} \u(0) -
		\u (0) \|_s \leq C h^{r + 1} . \label{ritz-initial-estimate-Rsh}
	\end{align}
\end{lemma}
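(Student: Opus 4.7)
The plan is to prove the two parts of \eqref{ritz-initial-estimate-Rsh} separately: the energy-norm bound via coercivity plus Galerkin orthogonality, then the $L^2(\Sigma)$ bound by an Aubin--Nitsche-type duality argument on the surface operator $a_s(\cdot,\cdot)+(\cdot,\cdot)_\Sigma$. I would begin by rewriting \eqref{ritz-initial1} as
\begin{align*}
a_s(R_{sh}\u(0)-\u(0),\w_h)+(R_{sh}\u(0)-\u(0),\w_h)_\Sigma = G(\w_h),
\end{align*}
where $G(\w_h):=a_f({\bb \phi},E_h\w_h)-b(\psi,E_h\w_h)+({\bb \phi},E_h\w_h)$ with ${\bb \phi}:=(\partial_t\u-R_h^D\partial_t\u)(0)$ and $\psi:=(\partial_t p-R_h^D\partial_t p)(0)$. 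Subtracting the Galerkin identity for $R_h^S\u(0)$ from Definition \ref{ritz-solid} shows that $\chi_h:=R_{sh}\u(0)-R_h^S\u(0)\in\S_h^r$ satisfies the same equation.

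For the energy-norm part, I would test with $\w_h=\chi_h$ and use coercivity \eqref{Korn-ineq} to reduce to bounding $|G(\chi_h)|$. The crucial observation is that the construction of $E_h$ in Remark \ref{remark-FE-assm} actually produces the sharper bound $\|E_h\chi_h\|_{H^1(\Omega)}\leq C\|\chi_h\|_{H^{1/2}(\Sigma)}\leq C\|\chi_h\|_{H^1(\Sigma)}$, \emph{without} invoking the inverse estimate that would force an $h^{-1/2}$ factor. Combined with $\|{\bb \phi}\|_{H^1}+\|\psi\|+\|{\bb \phi}\|\leq Ch^r$ from Lemma \ref{lemma:Dirichlet-Ritz} applied to $(\partial_t\u,\partial_t p)$, this gives $\|\chi_h\|_{H^1(\Sigma)}\leq Ch^r$. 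Together with $\|R_h^S\u-\u\|_{H^1(\Sigma)}\leq Ch^r$, one obtains $h\|R_{sh}\u(0)-\u(0)\|_s\leq Ch^{r+1}$.

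For the $L^2(\Sigma)$ part, I would let $\bzeta\in H^2(\Sigma)^d$ solve $a_s(\bzeta,\bxi)+(\bzeta,\bxi)_\Sigma=(R_{sh}\u(0)-\u(0),\bxi)_\Sigma$ for all $\bxi\in H^1(\Sigma)^d$, so that $\|\bzeta\|_{H^2(\Sigma)}\leq C\|R_{sh}\u(0)-\u(0)\|_\Sigma$ by \eqref{reg-ass3}. Plugging in $\bxi=R_{sh}\u(0)-\u(0)$, splitting $\bzeta=(\bzeta-I_h^S\bzeta)+I_h^S\bzeta$, and invoking the defining relation of $R_{sh}$ with $\w_h=I_h^S\bzeta$, I arrive at
\begin{align*}
\|R_{sh}\u(0)-\u(0)\|_\Sigma^2\leq Ch^{r+1}\|R_{sh}\u(0)-\u(0)\|_\Sigma+|G(I_h^S\bzeta)|,
\end{align*}
where the first term uses $\|\bzeta-I_h^S\bzeta\|_{H^1(\Sigma)}\leq Ch\|\bzeta\|_{H^2(\Sigma)}$ together with the energy-norm estimate. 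The main obstacle is proving $|G(I_h^S\bzeta)|\leq Ch^{r+1}\|\bzeta\|_{H^2(\Sigma)}$. The idea is to take a lifting $\bv_*\in H^2(\Omega)^d$ of $\bzeta$ with $\|\bv_*\|_{H^2}\leq C\|\bzeta\|_{H^2(\Sigma)}$; Assumption (A3) gives $I_h^X\bv_*|_\Sigma=I_h^S\bzeta=E_hI_h^S\bzeta|_\Sigma$, so $E_hI_h^S\bzeta-I_h^X\bv_*\in\mathring\X_h^r$. Time-differentiating \eqref{RhD-def1} shows that $({\bb \phi},\psi)$ is Galerkin-orthogonal on $\mathring\X_h^r$, and therefore $G(I_h^S\bzeta)=a_f({\bb \phi},I_h^X\bv_*)-b(\psi,I_h^X\bv_*)+({\bb \phi},I_h^X\bv_*)$. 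I would then split $I_h^X\bv_*=\bv_*+(I_h^X\bv_*-\bv_*)$: the interpolation residual is routinely $O(h^{r+1}\|\bzeta\|_{H^2(\Sigma)})$, and on the smooth part integration by parts produces
\begin{align*}
a_f({\bb \phi},\bv_*)-b(\psi,\bv_*)+({\bb \phi},\bv_*)=-({\bb \phi},\nabla\cdot(2\mu\DD(\bv_*)))-(\psi,\nabla\cdot\bv_*)+({\bb \phi},2\mu\DD(\bv_*)\n)_\Sigma+({\bb \phi},\bv_*).
\end{align*}
The decisive point — and the step where naive bookkeeping loses a power of $h$ — is the pressure term $(\psi,\nabla\cdot\bv_*)$: the $L^2$ bound $\|\psi\|\leq Ch^r$ is insufficient, and one must appeal to the sharper negative-norm estimate $\|\psi\|_{H^{-1}}\leq Ch^{r+1}$ from \eqref{RhD-p-neg-err} (which is precisely why the Dirichlet Stokes--Ritz projection was constructed with the careful boundary-condition structure in Definition \ref{ritz-fluid-dirichlet-stokes}). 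The remaining bulk and boundary terms are controlled by $\|{\bb \phi}\|+\|{\bb \phi}\|_\Sigma\leq Ch^{r+1}$ from Lemma \ref{lemma:Dirichlet-Ritz}, yielding $|G(I_h^S\bzeta)|\leq Ch^{r+1}\|\bzeta\|_{H^2(\Sigma)}$ and hence $\|R_{sh}\u(0)-\u(0)\|_\Sigma\leq Ch^{r+1}$.
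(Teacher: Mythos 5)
Your proof is correct and takes essentially the same route as the paper: coercivity together with the $O(1)$-stable extension $E_h$ for the energy-norm bound, and then an Aubin--Nitsche duality argument on $-\mathcal{L}_s+I$, combined with integration by parts on $a_f$ and, decisively, the $H^{-1}$ pressure estimate \eqref{RhD-p-neg-err}, for the $L^2(\Sigma)$ bound. Your two extra organizational steps---introducing $\chi_h=R_{sh}\u(0)-R_h^S\u(0)$ so that the coercivity argument can be applied directly to a finite element function, and invoking the $\mathring\X_h^r$-orthogonality of $R_h^D$ to replace the arbitrary extension by the interpolant $I_h^X\bv_*$ of the lifted dual solution---are precisely what the paper leaves implicit, the first in the phrase ``standard $H^1$-norm estimate'' and the second in the remark that the rhs functional is independent of the choice of extension.
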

\begin{proof}
Since we can choose an extension $E_h{\bb \xi}_h$ to satisfy that  
 $\| E_h {\bb \xi}_h \|_{H^1 (\Omega)} \leq C \| {\bb \xi}_h \|_{H^1 (\Sigma)}$, 
 equation \eqref{ritz-initial1} implies that  
	\begin{align*}
		a_s (\u (0) - R_{{sh}}\u (0), {\bb \xi}_h)  + (\u (0) - R_{{sh}}\u (0),
		{\bb \xi}_h)_{\Sigma} \le Ch^r \| {\bb \xi}_h \|_{H^1 (\Sigma)} \, . 
	\end{align*} 
This leads to the following standard $H^1$-norm estimate: 
	\[ \| \u(0) - R_{{sh}} \u (0) \|_s + \|\u(0) - R_{{sh}} \u (0) \|_{\Sigma} \leq C h^r .\] 

In order to obtain an optimal-order $L^2$-norm estimate for $\u(0) - R_{{sh}} \u (0)$, we introduce the following dual 
problem: 
	\begin{equation}
	- \mathcal{L}_s \psi + \psi = R_{{sh}} \u (0) - \u (0) , \quad \text{$\psi$
		has periodic boundary condition on $\Sigma$}. 
	\end{equation}
The regularity assumption in \eqref{reg-ass3} implies that 
	\begin{equation*}
	a_s (\psi, {\bb \xi})  + (\psi, {\bb \xi})_{\Sigma} = ( \u(0) - R_{{sh}}\u (0),
	{\bb \xi})_{\Sigma} \quad \forall {\bb \xi} \in \S \quad \text{and } 
 \| \psi \|_{H^2(\Sigma)} \leq C\| \u(0) - R_{{sh}}\u (0) \|_{\Sigma} \, .
	\end{equation*}
	We can extend $\psi$ to be a function on $\Omega$, still denoted by $\psi$, satisfying the periodic boundary condition and $\| \psi \|_{H^2 (\Omega)} \leq C \| \psi\|_{H^2 (\Sigma)}$. Therefore, choosing ${\bb \xi}=\u(0) - R_{{sh}}\u (0)$ in the equation above leads to 
	\begin{align*}
		\| \u(0) - R_{{sh}}\u (0) \|_{\Sigma}^2 
		=\, & a_s (\u(0) - R_{{sh}}\u (0),\psi)  + (\u(0) - R_{{sh}}\u (0),\psi) _{\Sigma}\\
		=\,&a_s (\u(0) - R_{{sh}}\u (0),\psi - I_h \psi) + (\u(0) - R_{{sh}}\u (0),\psi - I_h \psi)_{\Sigma}\\
		& - a_f (\partial_t\u(0) - R_h^D\partial_t\u (0), I_h \psi) + 
		b(\partial_tp(0) - R_h^D\partial_tp (0), I_h \psi)\\
		& - (\partial_t\u(0) - R_h^D\partial_t\u (0), I_h \psi) \quad\mbox{(relation \eqref{ritz-initial1} is used)}\\
		\leq\, & C h^{r + 1} \| \psi \|_{H^2 (\Sigma)} + | a_f (\partial_t\u(0) - R_h^D\partial_t\u (0), \psi)| \\
		& + |b(\partial_tp(0) - R_h^D\partial_tp (0), \psi)| 
		 + |(\partial_t\u(0) - R_h^D\partial_t\u (0), \psi) |\, .
	\end{align*}
	Since 
	\begin{align*}
		&(\DD ( \partial_t \u(0) - R_h^D\partial_t \u(0)), \DD \psi) \\
		&= - ( \partial_t \u (0) -
		R_h^D\partial_t \u (0), \nabla \cdot \DD \psi) + (\partial_t \u (0) -
		R_h^D \partial_t \u (0), \DD \psi\cdot  \n)_{\Sigma}\\
		& \lesssim C h^{r + 1} \| \psi\|_{H^2(\Sigma)} , 
	\end{align*}
where the last inequality uses the estimate $\| \psi \|_{H^2 (\Omega)} \leq C \| \psi\|_{H^2 (\Sigma)}$ as well as the estimates of $\|\partial_t \u(0) - R_h^D\partial_t \u(0)\|$ and $ \|\partial_t \u(0) - R_h^D\partial_t \u(0)\|_\Sigma$ in \eqref{RhD-error} (with $\u(0)$ replaced by $\partial_t\u(0)$ therein). 
	Furthermore, using the $H^{-1}$ estimate in \eqref{RhD-p-neg-err}, we have  
	\begin{align} 
	 b( \partial_t p (0) - R_h^D\partial_t p (0), \psi)
 \lesssim C \|  \partial_t p (0) - R_h^D\partial_t p (0) \|_{H^{-
			1}_p} \| \psi \|_{H^2}  \le C h^{r+1} \| \psi \|_{H^2(\Sigma)} .
	\nn 
	\end{align} 
	Then, summing up the estimates above, we obtain  
	\[ \| \u (0) - R_{{sh}} \u (0) \|_{\Sigma}  \leq C h^{r + 1} \, . 
	\] 
The proof of Lemma \ref{Lemma:Rshu0} is complete. 
\hfill \end{proof}
\vskip0.1in 

\begin{lemma}\label{lemma:error-Rh-t=0}
Under the assumptions in Sections \ref{section:regularity} and \ref{FE-space}, the following error estimates hold (for the coupled Ritz projection in Definition \ref{def-Rhe0})$:$
	\begin{align}
	&	\| \e(0) - R_h\e (0)\|_{\Sigma}  + h\|  \e(0) - R_h\e (0) \|_s 
	+  \| \u(0) - R_h\u(0) \|_{\Sigma} \leq C h^{r + 1} , 
	 \label{ritz-initial-estimate1}
\\ 
& \|\u(0) - R_h\u(0)\| + h \| p(0) - R_hp(0) \|   \leq C h^{r + 1}  . 
		\label{ritz-initial-estimate2}
	\end{align}
\end{lemma}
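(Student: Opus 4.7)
The plan is to build the estimates in the same order as the three-layer definition of $R_h\e(0)$: starting from the bound on $R_{sh}\u(0)$ supplied by Lemma \ref{Lemma:Rshu0}, first control $(R_h\u(0),R_hp(0))$ by comparing them to the Dirichlet Stokes--Ritz projection $(R_h^D\u(0),R_h^Dp(0))$ of Lemma \ref{lemma:Dirichlet-Ritz}, and then close the bound on $R_h\e(0)$ by an $\mathcal{L}_s$-based duality argument on $\Sigma$ that mirrors the one used in Lemma \ref{Lemma:Rshu0}.

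For the trace bound $\|\u(0)-R_h\u(0)\|_{\Sigma}$ I would use the explicit formula $R_h\u(0)|_{\Sigma}=\widetilde{P}R_{sh}\u(0)=R_{sh}\u(0)-\lambda(R_{sh}\u(0))\n_h$ from Remark \ref{remark-4-2-div-free}; since incompressibility forces $(\u(0),\n)_{\Sigma}=0$, we get $|\lambda(R_{sh}\u(0))|\le C\|R_{sh}\u(0)-\u(0)\|_{\Sigma}\le Ch^{r+1}$ and the trace estimate follows. For the bulk estimates, the discrete difference $({\bb\zeta}_h,\pi_h):=(R_h\u(0)-R_h^D\u(0),R_hp(0)-R_h^Dp(0))\in \X_h^r\times Q_{h,0}^{r-1}$ satisfies a homogeneous Stokes FE system on $\mathring{\X}_h^r\times Q_{h,0}^{r-1}$ with prescribed trace $\widetilde{P}(R_{sh}\u(0)-R_h^S\u(0))$ whose $L^2(\Sigma)$-norm is $O(h^{r+1})$. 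A discrete lifting of this trace through $E_h$, together with \eqref{Eh-estimate} and the inf-sup condition \eqref{inf-sup-condition0}, yields the rough energy bound $\|{\bb\zeta}_h\|_{H^1}+\|\pi_h\|\le Ch^{r+1/2}$, which combined with Lemma \ref{lemma:Dirichlet-Ritz} already gives $h\|p(0)-R_hp(0)\|\le Ch^{r+1}$. To upgrade to $\|{\bb\zeta}_h\|\le Ch^{r+1}$ I would run an Aubin--Nitsche duality argument against a Dirichlet Stokes dual problem with regularity \eqref{reg-ass2}. The algebraic ingredient that makes the duality close is that $\widetilde{P}$ enforces $(R_h\u(0),\n)_{\Sigma}=(R_h^D\u(0),\n)_{\Sigma}=0$, so $b(q_h,{\bb\zeta}_h)=0$ holds for every $q_h\in Q_h^{r-1}$ (not only on the zero-mean subspace), allowing the dual pressure term to be absorbed into a standard $I_h^Q$-based interpolation residue.

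For $\|\e(0)-R_h\e(0)\|_s$ I would test \eqref{ritz-initial2} with $\w_h=I_h^S\e(0)-R_h\e(0)$, absorb the $\e$-terms via the coercivity of $a_s+(\cdot,\cdot)_{\Sigma}$, and bound the fluid contributions by the Step-2 estimates together with \eqref{inverse-sig}, obtaining the $O(h^r)$ bound. For the sharp $\|\e(0)-R_h\e(0)\|_{\Sigma}\le Ch^{r+1}$ I would replicate the duality argument from Lemma \ref{Lemma:Rshu0}: solve $-\mathcal{L}_s\psi+\psi=R_h\e(0)-\e(0)$ on $\Sigma$ (with regularity \eqref{reg-ass3}), extend $\psi$ into $\Omega$, and test \eqref{ritz-initial2} with $\w_h=I_h^S\psi$. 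Standard approximation and integration by parts handle the $a_s$, $(\cdot,\cdot)_{\Sigma}$, $a_f$ and $L^2$ contributions once the already-proved bounds on $\u(0)-R_h\u(0)$ are in hand; the only nontrivial term is the bulk pressure pairing $b(p(0)-R_hp(0),\psi)$.

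The main obstacle is therefore the negative-norm pressure estimate $\|p(0)-R_hp(0)\|_{H^{-1}}\le Ch^{r+1}$ for the coupled projection. For $R_h^Dp$ this is supplied by \eqref{RhD-p-neg-err}, so it reduces to bounding $\|\pi_h\|_{H^{-1}}$ at the same order. My plan is a second Bogovskii-based duality argument: for any zero-mean $\phi\in H^1(\Omega)$, Bogovskii produces $\bv\in H^2(\Omega)^d$ with $\bv|_{\Sigma}=0$, $\nabla\cdot\bv=\phi$ and $\|\bv\|_{H^2}\le C\|\phi\|_{H^1}$; testing the Stokes FE system for $({\bb\zeta}_h,\pi_h)$ with $I_h\bv\in\mathring{\X}_h^r$ and integrating by parts trades $(\pi_h,\phi)$ against $\|{\bb\zeta}_h\|$ modulo $O(h^{r+3/2})$ interpolation residues, so the $L^2$-bound on ${\bb\zeta}_h$ just established closes the loop. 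Once this is in place, the final calculation for $\|\e(0)-R_h\e(0)\|_{\Sigma}$ goes through verbatim as in Lemma \ref{Lemma:Rshu0}.
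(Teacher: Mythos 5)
Your proof is correct and follows essentially the same route as the paper, with the difference that you spell out explicitly what the paper compresses. The paper disposes of $\|\u(0)-R_h\u(0)\|$ and $\|p(0)-R_hp(0)\|$ with the single remark that $(R_h\u(0),R_hp(0))$ ``is essentially a Dirichlet Ritz projection with a different boundary value'' so Lemma~\ref{lemma:Dirichlet-Ritz} applies; you instead introduce $({\bb\zeta}_h,\pi_h)=(R_h\u(0)-R_h^D\u(0),R_hp(0)-R_h^Dp(0))$ and run the rough energy bound, the Aubin--Nitsche upgrade, and the Bogovskii-based $H^{-1}$ pressure duality on this discrete difference. The two are equivalent: the machinery you deploy is precisely the machinery behind Lemma~\ref{lemma:Dirichlet-Ritz} (and Lemma~\ref{Lemma-B3-RhD}), applied to the boundary value $\widetilde{P}R_{sh}\u(0)$ which, by Lemma~\ref{Lemma:Rshu0}, is $O(h^{r+1})$-accurate in $L^2(\Sigma)$ just as $\widetilde{P}R_h^S\u(0)$ is. Your trace estimate via $\lambda(R_{sh}\u(0))$ and your closing $\mathcal L_s$-duality for $\|\e(0)-R_h\e(0)\|_\Sigma$ match the paper's proof verbatim. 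One point worth highlighting: you correctly identify that the $\e$-duality step needs $\|p(0)-R_hp(0)\|_{H^{-1}}\le Ch^{r+1}$, which the paper invokes implicitly (via ``the same as Lemma~\ref{Lemma:Rshu0}'', which uses \eqref{RhD-p-neg-err}) but never states for the coupled projection; making this explicit, as you do with the Bogovskii argument, is a genuine improvement in rigor over the paper's exposition and is exactly the right way to close the loop.
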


\begin{proof}
From \eqref{P-tilde-formula0} we know that $R_h\u(0)=\widetilde PR_{sh}\u(0)=R_{sh}\u(0)-\lambda(R_{sh}\u(0))\n_h$ on $\Sigma$, with 
	\[ |\lambda (R_{{sh}}\u(0))|= \frac{|(R_{{sh}} \u (0),
		\n)_{\Sigma}|}{\| \n_h \|^2_{\Sigma}} = \frac{|(R_{{sh}}\u(0) - \u (0),
		\n)_{\Sigma}|}{\| \n_h \|_{\Sigma}^2} \le C \| R_{{sh}} \u (0) - \u (0)\|_{\Sigma} \le C h^{r + 1}. \]
Therefore, using the triangle inequality, we have 
$$
\| \u(0) - R_h\u(0) \|_\Sigma 
\le \| \u(0) - R_{sh}\u(0)\|_\Sigma + |\lambda(R_{sh}\u(0)) | \| \n_h \|_\Sigma 
\le Ch^{r+1} ,
$$
where the last inequality uses the estimate of $|\lambda(R_{sh}\u(0)) | $ above and the estimate in \eqref{ritz-initial-estimate-Rsh}. 

Since $(R_h\u(0),R_hp(0))$ is essentially a Dirichlet Ritz projection with a different boundary value, i.e., $\widetilde{P}  R_{s h} \u (0)$, the error estimates for $ \|\u(0) - R_h\u(0)\| $ and $ \| p(0) - R_hp(0) \|$ are the same as those in Lemma \ref{lemma:Dirichlet-Ritz}. With the optimal-order estimates of $\| \u(0) - R_h\u(0) \|_\Sigma $, $ \|\u(0) - R_h\u(0)\| $ and $ \| p(0) - R_hp(0) \|$, the estimation of $\| \e(0) - R_h\e (0)\|_{\Sigma}$ and $\|  \e(0) - R_h\e (0) \|_s $ would be the same as the proof of Lemma \ref{Lemma:Rshu0}. 
\hfill\end{proof}
\vskip0.1in

In addition, we present estimates for the time derivatives $\partial_t (\u - R_h \u) (0)$ and $\partial_t ( p - R_hp) (0)$. 
To this end, we use the following relation: 
\begin{equation}
	( \u - R_h\u) (0) = ( \u - R_{{sh}}\u) (0) + \lambda (R_{{sh}} \u
	(0)) \n_h \quad\mbox{on}\,\,\,\Sigma . 
\end{equation}
Replacing $( \u - R_{{sh}}\u) (0) $ by $( \u - R_h\u) (0) - \lambda (R_{{sh}} \u
	(0)) \n_h$ in \eqref{ritz-initial1}, we have  
\begin{align}\label{Rh-partial-t-u-1}
		& a_s (( \u - R_h\u) (0), \bv_h)  + (( \u - R_h\u) (0), \bv_h)_{\Sigma} +a_f (( \partial_t \u - R_h^D\partial_t \u) (0), \bv_h)\nn \\
		&\quad - b (( \partial_t p - R_h^D\partial_t p) (0), \bv_h) + ((
	\partial_t \u - R^D_h \partial_t \u) (0), \bv_h)\nn \\
	 & =  \lambda (R_{{sh}} \u (0)) (a_s (\n_h, \bv_h) + (\n_h, \bv_h)_{\Sigma}) 
	\quad \forall \bv_h \in \X^r_h . 
\end{align}
	Let $(\u^{\#}, p^{\#})\in \X\times Q$ be the weak solution of
	\begin{subequations}\label{u-hash-22}
			\begin{align}
			a_f (\u^{\#}, \bv ) - b (p^{\#}, \bv ) +(\u^{\#}, \bv ) &= a_s (\n , \bv ) + (\n, \bv)_{\Sigma}  && \forall \bv  \in \X  \label{u-hash-2} \\ 
			b (q , \u^{\#} ) &= 0 && \forall q  \in Q 
		\end{align}
	\end{subequations}
and $(\u_h^{\#}, p_h^{\#}) \in (\X^r_h, Q^{r-1}_h)$ denote the corresponding FE solution satisfying 
\begin{subequations}\label{def-u-hash22}
	\begin{align}\label{def-u-hash}
		a_f (\u^{\#}_h, \bv_h) - b (p^{\#}_h,
		\bv_h) + (\u^{\#}_h, \bv_h) &= a_s (\n_h, \bv_h) + (\n_h, \bv_h)_{\Sigma}  && \forall \bv_h \in \X^r_h \\
\label{def-u-hash-p}
		b (q_h, \u^{\#}_h) &= 0 && \forall q_h \in Q^{r-1}_h,
	\end{align}
\end{subequations}
where $\n_h$ is defined in \eqref{nh-def}. 

Note that \eqref{u-hash-22} is equivalent to the weak solution of
\begin{align}
	- & \nabla\cdot\sig(\u^\#,p^\#) + \u^\# = {\bf 0} \,\,\,\mbox{in}\,\,\,\Omega\quad\mbox{with}\,\,\, 
	\sig(\u^\#,p^\#){\bf n} = -\mathcal{L}_s{\bf n} +{\bf n} \,\,\,\mbox{on}\,\,\,\Sigma 
	\nn \\ 
	& \nabla \cdot \u^\# = 0 \,\,\,\mbox{in}\,\,\,\Omega . 
	\nn 
\end{align} 
Therefore, from the regularity estimate in \eqref{reg-ass1} (with $k=r-1/2$ therein) and assumption \eqref{Ls-second-order} on $\mathcal{L}_s$, we obtain the following regularity estimate for the solutions of \eqref{u-hash-22}: 
	\[ \| \u^{\#} \|_{H^{r+1}} + \| p^{\#} \|_{H^{r}} \leq C\|\n\|_{H^{r+3/2}(\Sigma)}\leq C . \]
By considering the difference between \eqref{u-hash-22} and \eqref{def-u-hash22}, the following estimates of ${\bf e}^{\#}_h:=I_h \u^{\#} - \u_h^{\#}$ and $m^{\#}_h:=I_h p^{\#} - p_h^{\#}$ can be derived for all $\bv_h
	\in \X^r_h$ and $q_h \in Q^{r-1}_h$: 
	\begin{align*}
	a_f ({\bf e}^{\#}_h, \bv_h) - b (m^{\#}_h, \bv_h) + ({\bf e}^{\#}_h, \bv_h) &\lesssim C {h^{r}}
	\| \bv_h \|_{H^1 (\Sigma)} + C h^{r} \| \bv_h \|_{H^1}\leq  C {h^{r-1/2}} \| \bv_h \|_{H^1} \\ 
	b (q_h, {\bf e}^{\#}_h) & \lesssim C h^{r} \| q_h \| ,
	\end{align*}
	where we have used the inverse estimate in \eqref{inverse-Sh} and the following trace inequality:
	\begin{equation*}
		\|\bv_h\|_{H^1(\Sigma)}\leq Ch^{-1/2}\|\bv_h\|_{H^{1/2}(\Sigma)}\leq C h^{-1/2} \| \bv_h \|_{H^1}.
	\end{equation*}
	From Korn's inequality and inf-sup condition \eqref{inf-sup-condition}, choosing $\bv_h={\bf e}_h^\#$ yields the following result: 
	\[ \| {\bf e}_h^{\#} \|_{H^1} + \| m_h^{\#} \| \leq C{h^{r-1/2}} ,\]
which also implies the following boundedness through the application of the triangle inequality:
\begin{equation*}
	\| \u_h^{\#} \|_{H^1} + \| p_h^{\#} \| \leq C .
\end{equation*} 
By using the boundedness of $H^1(\Omega)$-norm of $\u_h^{\#}$ and $L^2(\Omega)$-norm of $p_h^{\#}$, we can estimate $\partial_t (\u - R_h \u) (0)$ and $\partial_t ( p - R_hp) (0)$ as follows. 
 \begin{lemma} \label{time-derivative-ritz-u-p}
Under the assumptions in Sections \ref{section:regularity} and \ref{FE-space}, the following error estimates hold (for the time derivative of the coupled Ritz projection in Definition \ref{def-Rhe0})$:$
	\begin{align}\label{u-t-Linfty-H1-error}
	&  \| \partial_t (  \u - R_h\u) (0) \|  
		+ \| \partial_t ( \u - R_h\u) (0) \|_{\Sigma} + 
		h  \| \partial_t ( p - R_h p)  (0) \| \leq C h^{r +1} . 
	\end{align}
\end{lemma}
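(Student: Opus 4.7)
\textbf{Proof plan for Lemma \ref{time-derivative-ritz-u-p}.} The idea is to compare equation \eqref{Rh-partial-t-u-1} with the differentiated Ritz equation \eqref{evolution-ritz-t1-u-eta} evaluated at $t=0$. Introduce
$$\mathbf{w}_h := \partial_t R_h \u(0) - R_h^D \partial_t \u(0) \in \X_h^r, \qquad q_h^\star := \partial_t R_h p(0) - R_h^D \partial_t p(0) \in Q_h^{r-1}.$$
Subtracting \eqref{evolution-ritz-t1-u-eta}$|_{t=0}$ from \eqref{Rh-partial-t-u-1} cancels the terms $a_s((\u-R_h\u)(0),\bv_h)+((\u-R_h\u)(0),\bv_h)_\Sigma$ and leaves the discrete Stokes-type identity
$$a_f(\mathbf{w}_h,\bv_h) - b(q_h^\star,\bv_h) + (\mathbf{w}_h,\bv_h) = \lambda(R_{sh}\u(0))\bigl(a_s(\n_h,\bv_h) + (\n_h,\bv_h)_\Sigma\bigr) \quad \forall\, \bv_h \in \X_h^r.$$

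The next step is to verify the discrete divergence constraint $b(q_h,\mathbf{w}_h)=0$ for every $q_h \in Q_h^{r-1}$. From \eqref{evolution-ritz-t1-p} at $t=0$, $b(q_h,\partial_t R_h\u(0))=b(q_h,\partial_t\u(0))=0$ (the last equality using $\nabla\cdot\partial_t\u(0)=0$), while \eqref{RhD-partial_tu-q} gives $b(q_h,R_h^D\partial_t\u(0))=0$; so the two contributions cancel. With this constraint in hand, $(\mathbf{w}_h,q_h^\star)$ satisfies the same discrete Stokes problem as the pair $(\u_h^\#,p_h^\#)$ defined by \eqref{def-u-hash22}, except that the right-hand side is scaled by $\lambda(R_{sh}\u(0))$. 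The inf-sup condition \eqref{inf-sup-condition0} then furnishes uniqueness, and linearity yields the clean identification
$$\mathbf{w}_h = \lambda(R_{sh}\u(0))\,\u_h^\#, \qquad q_h^\star = \lambda(R_{sh}\u(0))\,p_h^\#.$$

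Finally I collect bounds. The estimate $|\lambda(R_{sh}\u(0))|\leq Ch^{r+1}$ follows from the formula \eqref{P-tilde-formula0}, the identity $(\u(0),\n)_\Sigma=0$ (a consequence of $\nabla\cdot\u(0)=0$ and periodicity in $z$ which makes the $\Sigma_l,\Sigma_r$ boundary contributions cancel), and the $L^2$-bound \eqref{ritz-initial-estimate-Rsh} for $R_{sh}\u(0)-\u(0)$. Combined with the boundedness $\|\u_h^\#\|_{H^1}+\|p_h^\#\|\leq C$ derived immediately before the lemma statement and a trace inequality, this yields
$$\|\mathbf{w}_h\|_{H^1} + \|\mathbf{w}_h\|_\Sigma + \|q_h^\star\| \leq Ch^{r+1}.$$
The conclusion follows by the triangle inequality together with the Dirichlet Stokes–Ritz error bounds of Lemma \ref{lemma:Dirichlet-Ritz} applied to $(\partial_t\u(0),\partial_t p(0))$ in place of $(\u(0),p(0))$; note that the pressure contribution from $R_h^D$ is only $O(h^r)$, which is exactly why the factor $h$ must appear in front of the pressure term in \eqref{u-t-Linfty-H1-error}.

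The main obstacle, and the real reason for the intricate construction in Section \ref{sec-ritz-1}, is the second step: ensuring that $\mathbf{w}_h$ is discretely divergence-free against \emph{all} of $Q_h^{r-1}$ (not just $Q_{h,0}^{r-1}$). This is precisely what the $\widetilde{P}$-adjusted boundary condition in Definition \ref{ritz-fluid-dirichlet-stokes} buys us through the extended divergence identity \eqref{full-div-free}, and without it one cannot reduce $(\mathbf{w}_h,q_h^\star)$ to a scalar multiple of $(\u_h^\#,p_h^\#)$.
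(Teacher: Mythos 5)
Your proof is correct and follows essentially the same route as the paper: combine \eqref{Rh-partial-t-u-1} with \eqref{evolution-ritz-t1} at $t=0$ to identify $\partial_t R_h\u(0)-R_h^D\partial_t\u(0)$ and $\partial_t R_hp(0)-R_h^D\partial_tp(0)$ as $\lambda(R_{sh}\u(0))(\u_h^\#,p_h^\#)$, then invoke $|\lambda(R_{sh}\u(0))|\le Ch^{r+1}$, the uniform bound on $(\u_h^\#,p_h^\#)$, and Lemma \ref{lemma:Dirichlet-Ritz} with $(\u,p)$ replaced by $(\partial_t\u,\partial_tp)$. You spell out the uniqueness step (via the inf-sup condition and the divergence constraint on all of $Q_h^{r-1}$ enabled by $\widetilde P$) that the paper leaves implicit in its phrase ``by comparing \ldots\ we find the following relations,'' which is a useful clarification rather than a deviation.
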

\begin{proof}
	Combining \eqref{Rh-partial-t-u-1}-\eqref{RhD-partial_tu-q} with \eqref{def-u-hash}-\eqref{def-u-hash-p} we observe
	\begin{align}
		&a_s (( \u - R_h\u) (0), \bv_h)  + ((\u - R_h\u) (0), \bv_h)_{\Sigma} \nn\\
		&\,\,\, +a_f(( \partial_t \u - R_h^D\partial_t \u) (0) - \lambda (R_{{sh}}\u (0))\u_h^{\#}, \bv_h)- b (( \partial_t p - R_h^D\partial_t p) (0)-\lambda(R_{{sh}} \u (0)) p_h^{\#}, \bv_h)\nn\\
		&\,\,\, +(( \partial_t \u - R^D_h\partial_t \u)(0) - \lambda (R_{{sh}} \u (0)) \u_h^{\#}, \bv_h) = 0 \quad \forall \bv_h \in \X^r_h \label{modified-RhD}\\
		&b \left(q_{h},(\partial_t \u - R^D_h \partial_t \u)(0) -
		\lambda (R_{{sh}} u (0)) \u_h^{\#} \right) = 0 \quad\,\,\,\,\,\, \forall q_h \in Q^{r-1}_h \label{modified-RhD-p}
	\end{align}
	By comparing \eqref{modified-RhD}-\eqref{modified-RhD-p} with \eqref{evolution-ritz-t1-u-eta}-\eqref{evolution-ritz-t1-p} we find the following relations: 
	\begin{align*} 
	\partial_t ( \u - R_h\u) (0) &= \left( { \partial_t \u - R^D_h} \partial_t \u
	\right) (0) - \lambda (R_{{sh}} \u (0)) \u_h^{\#} , \\
	 \partial_t ( p - R_h p) (0) &= ( \partial_t p - R_h^D\partial_t p) (0) -
	\lambda (R_{{sh}} \u (0)) p_h^{\#} . 
	\end{align*}
	Since $|\lambda (R_{{sh}}\u(0))|\leq Ch^{r+1}$ and $\|\u_h^\#\|+\|\u_h^\#\|_\Sigma+\|p^\#_h\|\leq C$, the result of this lemma follows from the estimates of the Dirichlet Stokes--Ritz projection in Lemma \ref{lemma:Dirichlet-Ritz} (with $\u$ and $p$ replaced by $\partial_t\u$ and $\partial_tp$ therein).
\hfill \end{proof}

%
%
\subsection{Error estimates of the coupled Ritz projection for $t>0$} 
In this subsection, using the results in subsection \ref{t:0 error-estimate}, we mainly present the proof of the $H^1$-error estimates \eqref{H1-norm-error2} and $L^2$-error estimates results in Theorem \ref{main-3}.

We first present $H^1$-norm error estimates for the the coupled Ritz projection by employing the auxiliary Ritz projections $R_h^S$ and $R_h^D$ defined in \eqref{RhS-def} and \eqref{RhD-def}, respectively. From \eqref{RhD-def2} we see that $$ R_h^D\u-R_h^S\u = \widetilde{P} R_h^S\u - R_h^S\u = - \lambda (R_h^S\u) \n_h \quad\mbox{with}\,\,\, \lambda (R_h^S\u)\in\mathbb{R}, $$ where the last equality follows from relation \eqref{P-tilde-formula0}. Therefore, with the relation above we have
\begin{align}
	&a_s(\u-R_h^D\u,\bv_h)+(\u-R_h^D\u,\bv_h)_\Sigma\nonumber\\
	=&\, a_s(\u-R_h^S\u,\bv_h)+(\u-R_h^S\u,\bv_h)_\Sigma + \lambda(R_h^S\u)\left(a_s(\n_h,\bv_h)+(\n_h,\bv_h)_\Sigma\right)\nonumber\\
	\lesssim&\, Ch^{r+1}\|\bv_h\|_{H^1(\Sigma)}\leq Ch^{r+1/2}\|\bv_h\|_{H^{1/2}(\Sigma)}\leq Ch^{r+1/2}\|\bv_h\|_{H^1} \quad \forall\bv_h\in \X_h^r,
\end{align}
where we have used the inverse inequality in \eqref{inverse-Sh} and the trace inequality in the derivation of the last two inequalities. Moreover, since the auxiliary Ritz projection $R_h^D$ defined in \eqref{RhD-def} is time-independent, it follows that $(\partial_t R_h^D u,\partial_t R_h^D p) = (R_h^D \partial_t u, R_h^D\partial_t  p)$. Therefore, in view of estimate \eqref{RhD-error} for the Dirichlet Stokes--Ritz projection, the following estimate can be found: 
\begin{align}\label{RhD-use-1} 
	&a_s ( \u - R_h^D\u, \bv_h)  + ( \u - R_h^D\u, \bv_h)_{\Sigma} + a_f
	(\partial_t ( \u - R_h^D\u), \bv_h)\nn\\
	& - b (\partial_t ( p - R_h^Dp), \bv_h) + 
	(\partial_t ( \u - R_h^D\u), \bv_h) \lesssim C h^r \| \bv_h \|_{H^1}\quad\forall\bv_h\in \X_h^r.
\end{align}
By considering the difference between \eqref{evolution-ritz-t1-u-eta} and \eqref{RhD-use-1}, we can derive the following inequality: 
	\begin{align}\label{L2H1-estimate-tmp1} 
		&a_s (R_h \u - R_h^D \u, \bv_h)  + (R_h \u - R_h^D \u, \bv_h)_{\Sigma} + a_f
		(\partial_t (R_h \u - R_h^D \u), \bv_h)\nn\\
		& - b (\partial_t (R_h p - R_h^D p), \bv_h) +
		(\partial_t (R_h \u - R_h^D \u), \bv_h) \lesssim C h^r \| \bv_h \|_{H^1} \quad \forall \bv_h\in \X_h^r. 
	\end{align} 
Then, choosing $\bv_h = \partial_t (R_h \u -  R_h^D \u)$ in \eqref{L2H1-estimate-tmp1} and using relation $b(\partial_t (R_h p - R_h^D p), \partial_t (R_h \u - R_h^D \u))=0 $ (which follows from \eqref{full-div-free} and \eqref{evolution-ritz-t1-p}),  
using Young's inequality 
$$
Ch^r \|\partial_t(R_h\u-R_h^D\u) \|_{H^1} 
\le C\varepsilon^{-1} h^{2r} + \varepsilon\|\partial_t(R_h\u-R_h^D\u) \|_{H^1}^2 
$$
with a small constant $\varepsilon$ so that $\varepsilon\|\partial_t(R_h\u-R_h^D\u) \|_{H^1}^2$ can be absorbed by the left hand side of \eqref{L2H1-estimate-tmp1}, we obtain 
	\begin{align}\label{Rhu-Linfty-H1-Sigma}
		&\| R_h \u - R_h^D \u \|_{L^\infty H^1(\Sigma)} + \| \partial_t ( R_h \u - R_h^D \u) \| _{L^2 H^1} \nn\\
		&\leq C h^r + C\|
		(R_h \u - R_h^D \u) (0) \|_{s}  +C \| (R_h \u - R_h^D \u) (0) \|_{\Sigma}  \leq
		C h^r , 
	\end{align}
where the last inequality uses the estimates in Lemma \ref{lemma:error-Rh-t=0} and Lemma \ref{lemma:Dirichlet-Ritz}.  Then, by applying the inf-sup condition in \eqref{inf-sup-condition} (which involves $\| \bv_h \|_{H^1(\Sigma)}$ in the denominator), we can obtain the following estimate from \eqref{L2H1-estimate-tmp1}:
\begin{equation}\label{dt-Rhp-error}
	\| \partial_t (R_h p - R_h^D p) \| \leq C\| R_h \u - R_h^D \u \|_{H^1(\Sigma)}+C\|\partial_t( R_h \u - R_h^D \u)\|_{H^1}+Ch^r,
\end{equation}
which combined with the estimate in \eqref{Rhu-Linfty-H1-Sigma}, leads to the following estimate: 
\begin{align} \label{dt-Rhp-error-L2L2}
\| \partial_t (R_h p - R_h^D p) \|_{L^2 L^2} \leq C h^r . 
\end{align}
Therefore, using an additional triangle inequality, the estimates in \eqref{Rhu-Linfty-H1-Sigma}--\eqref{dt-Rhp-error-L2L2} can be written as follows: 
\begin{align} 
	\| \partial_t(R_h \u - \u)\|_{L^2 H^1} +  \| R_h \u - \u \|_{L^\infty H^1 (\Sigma)} + \|\partial_t( R_h p - p) \|_{L^2 L^2} \leq C h^r . 
	\label{H1-norm-error1} 
\end{align} 
With the initial estimates in Lemma \ref{lemma:error-Rh-t=0}, the estimate of $\| \partial_t ( R_h \u - \u) \| _{L^2 H^1}$ above further implies that 
\begin{align}\label{Rhu-Ihu-Linfty-H1}
	\| R_h \u - \u \|_{L^\infty H^1} \leq \| (R_h \u - \u) (0)
	\|_{H^1} + C\| \partial_t ( R_h \u - \u) \| _{L^2 H^1} \leq C h^r \, . 
\end{align}
Since $ \partial_t (R_h \e - \e) = R_h \u - \u$ on the boundary $\Sigma$, by using the Newton--Leibniz formula with respect to $t\in[0,T]$, the estimate in \eqref{H1-norm-error1} and initial estimates in Lemma \ref{lemma:error-Rh-t=0}, we have 
\begin{align}\label{Rheta-Iheta-Linfty-H1}
	\| R_h \e - \e \|_{L^\infty H^1 (\Sigma)} 
	& \leq \| ( R_h \e - \e)
	(0) \|_{H^1(\Sigma)} + C \| \partial_t (R_h \e -  \e) \|_{L^2 H^1 (\Sigma)} \notag\\
	& \leq \| ( R_h \e - \e)
	(0) \|_{H^1(\Sigma)} + C \| R_h \u - \u \|_{L^2 H^1 (\Sigma)} 
	\leq C h^r . 
\end{align}
In the same way, from \eqref{H1-norm-error1} and initial estimates in Lemma \ref{lemma:error-Rh-t=0} we have
\begin{align}\label{Linfty-L2-Rhp}
	\| R_h p - p \|_{L^\infty L^2} 
	&\leq C\| (R_h p - p)(0)\| + C\|R_h p - \u \|_{L^2 L^2} \leq Ch^r.
\end{align}
Moreover, by differentiating  \eqref{evolution-ritz-t1} with respect to time, we have
	\begin{subequations}\label{evolution-ritz-3} 
		\begin{align}
			a_s& (\partial_t (R_h \u - \u), \bv_h)   + (\partial_t (R_h \u - \u),
			\bv_h)_{\Sigma} + a_f (\partial^2_t (R_h \u - \u), \bv_h)  \nn\\
			& - b (\partial_t^2 (R_h
			p - p), \bv_h) + (\partial_t^2 (R_h \u - \u), \bv_h) = 0 && \forall \bv_h \in\X_h^r , 
			 \label{evolution-ritz-3-u-eta} \\ 
			 b&(q_h,\partial_{t}^2(R_h\u-\u)) =0&& \forall q_h\in Q_h^{r-1} .\label{evolution-ritz-3-p}
		\end{align}
	\end{subequations} 
Similarly, by choosing $\bv_h = \partial_t^2 (R_h \u - R_h^D \u)$ in \eqref{evolution-ritz-3-u-eta} and using the same approach as above with the initial value estimates in \eqref{u-t-Linfty-H1-error}, we can obtain the following estimate (the details are omitted): 
\begin{align} 
 \| \partial_t (R_h \u -  \u) \|_{L^\infty H^1} 
  +  \| \partial_t (R_h \u -  \u) \|_{L^\infty H^1(\Sigma)}   + \| \partial_t (R_h p - p) \|_{L^\infty L^2} &\nn \\
 + \| \partial_t^2 ( R_h \u - \u) \|_{L^2 H^1}+\|\partial_t^2(R_hp-p)\|_{L^2L^2}
& \leq C h^r \, . 
\label{H1-norm-error2} 
\end{align} 
This establishes the $H^1$-norm error estimates for the coupled non-stationary Ritz projection defined in \eqref{evolution-ritz}. 

We then present $L^2$-norm error estimates for the coupled non-stationary Ritz projection. 
To this end, we introduce the following dual problem: 
	\begin{subequations}\label{dual-problem-Ritz}
	\begin{align}
	- \mathcal{L}_s {\bb \phi} + {\bb \phi} &=  \partial_t \sig ({\bb \phi}, q)\n + {\bf f}
			 && \text{in } 
			\Sigma\label{dual-phi-1}\\
		-\nabla \cdot \sigma ({\bb \phi}, q) + {\bb \phi} &=  0 && \text{in } \Omega\label{dual-phi-2}\\
		\nabla \cdot {\bb \phi} &=  0 && \text{in } \Omega , 
		\label{dual-phi-3}
	\end{align} 
	\end{subequations}
with the initial condition $\sig({\bb \phi},q)\n =0$ at $t=T$. Problem \eqref{dual-problem-Ritz} can be equivalently written as a backward evolution equation of ${\bb \xi}=\sig ({\bb \phi}, q) \n$, i.e., 
	\begin{equation}\label{xi-strong-eq-1}
		- \mathcal{L}_s \mathcal{N} {\bb \xi} + \mathcal{N} {\bb \xi} - \partial_t \mathcal{}
		{\bb \xi} = \f \,\,\, \text{on}\,\,\, \Sigma\times[0,T), \,\,\,\mbox{with initial condition}\,\,\, {\bb \xi} (T) = 0 , 
	\end{equation}
where $\mathcal{N}: H^{-\frac12}(\Sigma)^d\rightarrow H^{\frac12}(\Sigma)^d$ is the Neumann-to-Dirichlet map associated to the Stokes equations. 
The existence, uniqueness and regularity of solutions to \eqref{dual-problem-Ritz} are presented in the following lemma, for which the proof is given in the Appendix A 
 by utilizing and analyzing \eqref{xi-strong-eq-1}.

\begin{lemma}\label{dual-reg} 
Problem \eqref{dual-problem-Ritz} has a unique solution which satisfies the following estimate: 
\begin{equation}\label{dual-reg-estimate}
		\| {\bb \phi} \|_{L^2 H^2} + \| {\bb \phi} \|_{L^2 H^2 (\Sigma)} + \| q \|_{L^2 H^1} + \| \sig ({\bb \phi}, q) (0) \n \|_{\Sigma}\leq C\| {\bf f} 
		 \|_{L^2 L^2 (\Sigma)}.
	\end{equation}
\end{lemma}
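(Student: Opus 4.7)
The plan is to reduce the coupled dual problem to the scalar backward evolution equation \eqref{xi-strong-eq-1} on $\Sigma$ for $\bb\xi = \sig(\bb\phi,q)\n$, and then invoke abstract parabolic theory. First I would verify that, for each fixed $t$, the Stokes system \eqref{dual-phi-2}--\eqref{dual-phi-3} with Neumann datum $\bb\xi$ admits a unique solution $(\bb\phi,q)$ (with $q$ normalized to have zero mean), so that the Neumann-to-Dirichlet map $\mathcal{N} : H^{-1/2}(\Sigma)^d \to H^{1/2}(\Sigma)^d$ given by $\mathcal{N}\bb\xi = \bb\phi|_\Sigma$ is well defined. A standard integration-by-parts argument against a second Stokes problem shows that $\mathcal{N}$ is symmetric and positive, i.e. $(\mathcal{N}\bb\xi,\bb\eta)_\Sigma = (\bb\xi,\mathcal{N}\bb\eta)_\Sigma$ and $(\mathcal{N}\bb\xi,\bb\xi)_\Sigma \gtrsim \|\bb\xi\|_{H^{-1/2}(\Sigma)}^2$. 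Substituting $\bb\phi|_\Sigma=\mathcal{N}\bb\xi$ into \eqref{dual-phi-1} yields \eqref{xi-strong-eq-1}.

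Next, via the time-reversal $\tilde{\bb\xi}(t):=\bb\xi(T-t)$ the problem becomes the forward evolution $\partial_t\tilde{\bb\xi} + A\tilde{\bb\xi} = -\tilde\f$, $\tilde{\bb\xi}(0)=0$, where $A := (-\mathcal{L}_s+I)\mathcal{N}$. Equipping $H^{-1/2}(\Sigma)^d$ with the inner product $\langle\bb\xi,\bb\eta\rangle := (\mathcal{N}\bb\xi,\bb\eta)_\Sigma$ and the form domain $H^{1/2}(\Sigma)^d$ with $(\mathcal{N}\bb\xi,(-\mathcal{L}_s+I)\mathcal{N}\bb\eta)_\Sigma$, the operator $A$ is self-adjoint and positive. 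A standard Galerkin/Lions--Magenes argument then yields a unique solution with the basic energy inequality obtained by testing with $\tilde{\bb\xi}$:
\begin{equation*}
\|\tilde{\bb\xi}\|_{L^\infty H^{-1/2}(\Sigma)}^2 + \|\mathcal{N}\tilde{\bb\xi}\|_{L^2 H^1(\Sigma)}^2 \lesssim \|\tilde\f\|_{L^2 L^2(\Sigma)}^2.
\end{equation*}
Testing instead with $\partial_t\tilde{\bb\xi}$ (or equivalently $A\tilde{\bb\xi}$), and using that $\tilde{\bb\xi}(0)=0$, upgrades this to $\tilde{\bb\xi}\in C([0,T];L^2(\Sigma))\cap L^2(0,T;H^1(\Sigma))$, producing the bound on $\|\sig(\bb\phi,q)(0)\n\|_\Sigma = \|\bb\xi(0)\|_\Sigma$.

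The last step is to recover the bulk and boundary regularity of $(\bb\phi,q)$ from that of $\bb\xi$. For a.e.\ $t$, the elliptic Stokes estimate \eqref{reg-ass1} applied pointwise in $t$ with Neumann datum $\bb\xi\in H^1(\Sigma)^d$ (taking $k=1/2$) gives $\|\bb\phi\|_{H^2} + \|q\|_{H^1} \lesssim \|\bb\xi\|_{H^1(\Sigma)}$; integrating in $t$ then yields the $L^2_t H^2$ and $L^2_t H^1$ bounds. For the tangential regularity $\|\bb\phi\|_{L^2 H^2(\Sigma)}$, since $\bb\phi|_\Sigma = \mathcal{N}\bb\xi$, I would combine the $H^1(\Sigma)$-bound on $\bb\xi$ with the regularity estimate \eqref{reg-ass3} for $\mathcal{L}_s$, together with the trace theorem applied to the bulk $H^2$-estimate on $\bb\phi$.

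The main obstacle is setting up the correct functional-analytic framework for $A=(-\mathcal{L}_s+I)\mathcal{N}$: the pairing with $\mathcal{N}$ shifts Sobolev indices by one, so to obtain a clean self-adjoint, positive realization on which standard parabolic theory applies one must weight the energy space by $\mathcal{N}$, and then carefully trace indices back to the usual scale $H^s(\Sigma)$. A secondary difficulty is proving the initial-trace bound $\|\bb\xi(0)\|_\Sigma \lesssim \|\f\|_{L^2 L^2(\Sigma)}$, which requires the sharper $\partial_t\tilde{\bb\xi}$-testing with justification of its validity at $t=0$; this is the reason for the specific combination of norms appearing on the left-hand side of \eqref{dual-reg-estimate}.
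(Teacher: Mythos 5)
Your overall strategy — reduce to the boundary evolution equation for $\bb\xi=\sig(\bb\phi,q)\n$ via the Neumann-to-Dirichlet map, symmetrize by weighting with $\mathcal{N}$, apply abstract parabolic theory, then recover $(\bb\phi,q)$ by elliptic regularity — is indeed the route the paper takes. However, there are two genuine gaps in the proposal that would stop it from closing.

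First, the claimed coercivity $(\mathcal{N}\bb\xi,\bb\xi)_\Sigma \gtrsim \|\bb\xi\|_{H^{-1/2}(\Sigma)}^2$ is false. The operator $\mathcal{N}$ has a nontrivial one-dimensional kernel: for the Neumann datum $\lambda\n$ the Stokes solution is $({\bb \phi},q)=(\mathbf{0},-\lambda)$, so $\mathcal{N}(\lambda\n)=0$. What one does have (cf.\ \eqref{NtD-garding}) is $(\mathcal{N}\bb\xi,\bb\xi)_\Sigma \sim \|\mathcal{N}\bb\xi\|_{H^{1/2}(\Sigma)}^2$, which degenerates on $\mathbb{R}\n$. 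Consequently your ``inner product'' $\langle\bb\xi,\bb\eta\rangle := (\mathcal{N}\bb\xi,\bb\eta)_\Sigma$ is only a semi-inner product on $H^{-1/2}(\Sigma)^d$, and the abstract Lions--Magenes setup is ill-posed as stated. The paper repairs this by working on the constrained subspaces $\widetilde{H}^s(\Sigma)^d = \{\bb\zeta:(\bb\zeta,\n)_\Sigma=0\}$ (where $\mathcal{N}$ and $\mathcal{D}$ are genuine inverse isomorphisms), splitting $\f = \widetilde{\f}+c(t)\n$ with $\widetilde{\f}=\pi\f$, running the parabolic theory only on the $\widetilde{L}^2$-component, and recovering the $\n$-component through a decoupled scalar ODE for $k(t)$. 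That decomposition has to be built into your framework from the outset.

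Second, even after restricting to $\widetilde{H}^{-1/2}$, the upgrade to $\tilde{\bb\xi}\in C([0,T];L^2(\Sigma))\cap L^2(0,T;H^1(\Sigma))$ does \emph{not} follow from testing the $\bb\xi$-equation with $\partial_t\tilde{\bb\xi}$ or $A\tilde{\bb\xi}$ in the $\mathcal{N}$-weighted pairing. That test yields $\tilde{\bb\xi}\in L^\infty L^2(\Sigma)\cap L^2H^{1/2}(\Sigma)$ and $\partial_t\tilde{\bb\xi}\in L^2H^{-1/2}(\Sigma)$ — one Sobolev half-step short. The estimate \eqref{dual-reg-estimate} requires $\|\bb\xi\|_{L^2H^1(\Sigma)}$ and $\|\partial_t\bb\xi\|_{L^2L^2(\Sigma)}$ (this is \eqref{reg-xi}); with only $\bb\xi\in L^2H^{1/2}(\Sigma)$ the Neumann elliptic regularity \eqref{reg-ass1} gives merely ${\bb \phi}\in L^2H^{3/2}$, not $L^2H^2$. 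The paper obtains the extra half index by conjugating with $\mathcal{N}^{1/2}$ to get a genuinely self-adjoint positive operator $\mathcal{B}=\mathcal{N}^{1/2}\pi(I-\mathcal{L}_s)\mathcal{N}^{1/2}$ on $\widetilde{L}^2(\Sigma)^d$, establishing $L^2$-maximal parabolic regularity for $\mathcal{B}$ at both the $L^2(\Sigma)$ and $H^1(\Sigma)$ levels, and then complex-interpolating to the intermediate $H^{1/2}(\Sigma)$ level; traced back through $\mathcal{N}^{1/2}$, this is exactly the missing $H^1$-estimate for $\bb\xi$. So the ``shift of indices'' you flagged as the main obstacle is not just bookkeeping to be done afterwards — it requires the interpolation step, which is not present in your proposal.
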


	
By choosing ${\bf f} = R_h \e - \e$ and, testing equations \eqref{dual-phi-1} and \eqref{dual-phi-2} with $R_h \e - \e$ and $R_h\u-\u$, respectively, and using relation $\partial_t(R_h \e - \e)=R_h\u-\u$ on $\Sigma$, we have
	\begin{align*}
		&a_s ({\bb \phi}, R_h \e - \e)  + ({\bb \phi}, R_h \e - \e)_{\Sigma} + a_f
		({\bb \phi}, R_h \u - \u) - b (q, R_h \u - \u) + ({\bb \phi}, R_h \u - \u)\\
		&= \frac{d}{d t}
		(\sig ({\bb \phi}, q)\cdot \n, R_h \e - \e)_{\Sigma} + \| R_h \e - \e
		\|_{\Sigma}^2.
	\end{align*}
In view of the definition of the non-stationary Ritz projection in \eqref{evolution-ritz}, we can subtract $I_h{\bb\phi}$ from ${\bb\phi}$ in the inequality above by generating an additional remainder $b(R_hp-p,{\bb \phi}-I_h{\bb \phi})$. This leads to the following result in view of the estimate in \eqref{H1-norm-error1}: 
	\begin{align} 
	& \frac{d}{d t} (\sig ({\bb \phi}, q) \n, R_h \e -  \e)_{\Sigma} + \| R_h
	\e - \e \|_{\Sigma}^2 
	 = a_s ( {\bb \phi} - I_h {\bb \phi} , R_h \e - \e)  + ({\bb \phi} - I_h {\bb \phi}, R_h \e -  \e)_{\Sigma}  
	\nn \\ 
	&\quad\, + a_f
		({\bb \phi}-I_h{\bb \phi}, R_h \u - \u) - b (q - I_h q, R_h \u - \u) + ({\bb \phi}-I_h{\bb \phi}, R_h \u - \u)-b(R_hp-p,{\bb \phi}-I_h{\bb \phi})
\nn \\ 	
	& \le C h^{r + 1} (\| {\bb \phi} \|_{H^2} + \| {\bb \phi} \|_{H^2 (\Sigma)} + \| q \|_{H^1})  .
	\nn
		\end{align} 
Since $\| ( R_h \e - \e) (0) \|_{\Sigma} \le C h^{r+1}$ (see Lemma \ref{lemma:error-Rh-t=0}), the inequality above leads to the following result: 
\begin{align*} 
&\| R_h \e - \e \|_{L^2 L^2 (\Sigma)}^2 \\
&\leq 
 Ch^{r + 1}  \| R_h \e - \e \|_{L^2 L^2 (\Sigma)}
 +  \| R_h \e(0) - \e(0) \|_{L^2 (\Sigma)} \| (\sig ({\bb \phi}, q) \n) (0) \|_{L^2(\Sigma)} \\
&\leq 
 Ch^{r + 1}  \| R_h \e - \e \|_{L^2 L^2 (\Sigma)}
 + C h^{r+1} \| R_h \e - \e \|_{L^2 L^2 (\Sigma)} , 
\end{align*} 
and therefore 
\begin{align} 
 \| R_h \e - \e \|_{L^2 L^2 (\Sigma)} \leq Ch^{r + 1} . 
 \label{L22-1} 
\end{align} 

By using the same approach, choosing ${\bf f} = R_h \u - \u$ and ${\bf f} = \partial_t(R_h \u - \u)$ in \eqref{dual-phi-1}, respectively, the following result can be shown (the details are omitted):  
\begin{align} 
\| R_h \u - \u \|_{L^2 L^2 (\Sigma)} + 
\| \partial_t ( R_h \u - \u) \|_{L^2 L^2 (\Sigma)} \leq C h^{r +1} . 
 \label{L22-2} 
 \end{align} 
This also implies, via the Newton--Leibniz formula in time,   
\begin{align} \label{L-infty-2}
\|  R_h \u - \u \|_{L^\infty L^2 (\Sigma)} \leq C h^{r +1} \, .
\end{align} 
Furthermore, we consider a dual problem defined by 

	\begin{equation}\label{fluid-dual-final}
		\left\{\begin{aligned}
			&- \nabla \cdot \sigma ({\bb \phi}, q) + {\bb \phi} = R_h \u - \u && \text{in }\Omega\\[3pt] 
			&\nabla \cdot {\bb \phi} = 0 && \text{in } \Omega\\
			&{\bb \phi}|_{\Sigma} = 0,\quad q\in L^2_0(\Omega) , 
		\end{aligned}\right.
	\end{equation}
which satisfies the following standard $H^2$ regularity estimate 
	\[ \| {\bb \phi} \|_{H^2} + \| q \|_{H^1} + \| \sigma({\bb \phi} ,q) \n \|_{L^2(\Sigma)}  \leq C \| R_h \u - \u\| , 
	\]
where the term $\| \sigma({\bb \phi} ,q) \n \|_{L^2(\Sigma)} $ is included on the left-hand side because it is actually bounded by $\| {\bb \phi} \|_{H^2} + \| q \|_{H^1}$. Then, testing \eqref{fluid-dual-final} with $R_h \u - \u$, we have 
	\begin{align*}
		&\| R_h \u - \u \|^2 \\
		& = a_f ({\bb \phi}, R_h \u - \u) - b (q, R_h \u - \u) + ({\bb \phi},R_h \u - \u) - (\sig ({\bb \phi}, q) \n, R_h \u - \u)_{\Sigma}\\
		& =  a_f ({\bb \phi} - I_h {\bb \phi}, R_h \u - \u) - b (q - I_h q, R_h \u - \u) -
		(\sig ({\bb \phi}, q) \n, R_h \u - \u)_{\Sigma}\\
		 &\quad +({\bb \phi}-I_h{\bb \phi},R_h \u - \u) - b (R_h p - p,{\bb \phi}- I_h  {\bb \phi}) \quad \mbox{(as a result of \eqref{evolution-ritz} with $\bv_h=I_h{\bb \phi},q_h=I_hq$)} \\
		& \le  C h (\| {\bb \phi} \|_{H^2} +\|q\|_{H^1}) (\| R_h \u
		- \u \|_{H^1} +\|R_h p - p\|)\\
		& \quad + \| \sig({\bb \phi}, q)\cdot \n \|_{\Sigma} \| R_h \u - \u \|_{\Sigma}\\
		& \le C h^{r + 1} \| R_h \u - \u \| + C \| R_h \u -\u \| \| R_h \u - \u \|_{\Sigma} . 
	\end{align*}
The last inequality implies, in combination with \refe{L-infty-2}, the following result: 
	\begin{align} 
	\| R_h \u - \u \| \leq Ch^{r + 1}\, . 
	\label{Linfty-L2}
	\end{align} 
By using the same approach, replacing $R_h\u-\u$ by $\partial_t (R_h\u-\u)$ in \eqref{fluid-dual-final}, the following estimate can be shown (the details are omitted): 
	\begin{equation}
	\| \partial_t(R_h \u - \u) \|_{L^2L^2} \leq Ch^{r + 1}\, .
	\end{equation}
The proof of Theorem \ref{main-3} is complete. 
\hfill\endproof


%
%
%
\section{Numerical examples}
In this section, we present numerical tests to support the theoretical analysis in this article and to show the effectivenss of the proposed algorithm. For  2D numerical examples, the operator ${\cal L}_s\e = C_0 \partial_{xx} \e-  C_1 \e$ on the interface $\Sigma$ is considered. 
All computations are performed by the finite element package NGSolve; see \cite{Ngs}. 
\vskip0.1in 

\begin{example}\upshape 
To test the convergence rate of the algorithm, we consider an artificial example of a two-dimensional thin structure model  given in \refe{f-e}--\refe{s-e} with extra sources such that the exact solution is given by
\begin{align}\label{exact-solutions}
u_1 &= 4\sin(2\pi x)\sin(2 \pi y)\sin(t),\nn 
\\
u_2 &= 4(\cos(2\pi x)\cos(2 \pi y))\sin(t),\nn 
\\
p &=  8(\cos(4\pi x) - \cos(4\pi y))\sin(t),\nn 
\\
\eta_1 &= 0, \,\,\,\,\,\, \eta_2 = -4\cos(2 \pi x)\cos(t).\nn 
\end{align}

First, we examine a problem involving left/right-side periodic boundary conditions and top/bottom interfaces on the domain $\Omega = [0,2] \times [0,1]$. A uniform triangular partition is employed, featuring $M+1$ vertices in the $y$-direction and $2M+1$ vertices in the $x$-direction, where $h = 1/M$. The classical lowest-order Taylor--Hood element is utilized for spatial discretization. To verify the $L^2$-norm error estimates, we set all involved parameters to $1$. Our algorithm is applied to solve the system with $M = 8, 16, 32$, $\tau = h^3$, and the terminal time $T = 0.1$. The numerical results are presented in Table \ref{table1}, which shows that the algorithm has third-order accuracy for velocity and displacement in the $L^2$-norm, as well as second-order accuracy for pressure in the $L^2$-norm and displacement in the energy-norm. These numerical results align with our theoretical analysis.

\setcounter{table}{0}
\begin{table}[h!]
    \caption{The convergence order of the algorithm under periodic boundary conditions}
    \centering
    \begin{tabular}{lcccc} 
  \hline 
     Taylor--Hood elements ($ \tau=h^3$)  & $\|\u^N-\u_h^N\| $  
     & $ \|p^N-p_h^N \| $ 
     &  $\|\e^N-\e_h^N\|_\Sigma $   
     & $\| \e^N-\e_h^N \|_s$   \\
     \hline 
    $h = 1/8$      &6.852e-3  & 1.403e-1 & 1.324e-2  & 8.075e-1  \\
    $h = 1/16$    &6.848e-4  & 2.691e-2 & 1.644e-3  & 2.029e-1  \\
    $h = 1/32$     &7.937e-5 & 6.297e-3 & 2.052e-4  & 5.079e-2   \\
    \hline 
 \mbox{order} & 3.10       & 2.10  & 3.00  & 2.00 \\ 
 \hline 
    \end{tabular} 
    \label{table1}
\end{table} 

Next, we test our algorithm for a model with Dirichlet boundary condition on the left and right boundaries, using the same configuration as previously described. Both the lowest-order Taylor--Hood element and the MINI element are employed for spatial discretization. For the Taylor--Hood element and the MINI element, we adopt $\tau = h^3$ and $\tau = h^2$ in the computation, respectively. The numerical results are displayed in Table \ref{table2}. As observed in Table \ref{table2}, the algorithm, when paired with both the Taylor--Hood element and the MINI element, yields numerical results exhibiting optimal convergence orders for $\u$ and $\e$. 

\begin{table}[h!]
    \caption{The convergence order of the algorithm under Dirichlet boundary conditions}
    \centering
    \begin{tabular}{lcccc} 
\hline 
     Taylor--Hood elements ($ \tau =h^3 $) & $\|u^N-u_h^N\|$ & $\|p^N-p_h^N\|$ 
     &  $\| \e^N-\e_h^N \|_\Sigma$   & $\| \e^N-\e_h^N \|_s$ \\
   \hline 
    $h = 1/8$\,\,\,    &4.553e-3  &  1.354e-1 & 1.313e-2 & 8.069e-1 \\
    $h = 1/16$     &6.009e-4  & 2.775e-2  & 1.645e-3 & 2.029e-1 \\ 
    $h = 1/32$     &7.693e-5  & 6.470e-3  &  2.055e-4 & 5.079e-2 \\ 
    \hline 
 \mbox{order} & 2.97 & 2.10 & 3.00 & 2.00 \\ \hline \hline 
  MINI elements ($ \tau =h^2 $) & $\|u^N-u_h^N\|$ & $\|p^N-p_h^N\|$ 
     &  $\| \e^N-\e_h^N \|_\Sigma$   & $\| \e^N-\e_h^N \|_s$ \\
   \hline 
    $h = 1/16$    &1.324e-2  & 3.186e-1 & 7.971e-2 & 4.001e0 \\
    $h = 1/32$     &3.349e-3  &  1.192e-1  & 1.999e-2 & 2.003e0 \\ 
    $h = 1/64$     &8.327e-4  &  4.641e-2  & 5.001e-3 & 1.002e0 \\ 
    \hline 
 \mbox{order} & 2.00 & 1.36 & 2.00 & 1.00 \\ \hline 
  \end{tabular} 
  \label{table2}
\end{table} 

\end{example}

\begin{example}\upshape 
We consider a benchmark model which was studied 
by many people \cite{BM-2016SINUM,  BukacT-2022-thin, Fer-2013NumMath, Formaggia-CMAME-2001, GGCC-2009, LiuJ-JCP-2014, Bukac2018SINUM}. All the quantities will be given in the CGS system of units \cite{Fer-2013NumMath}.
The model is described by \refe{f-e}--\refe{s-e} in $\Omega = (0,5)\times (0, 0.5)$ 
with the physical parameters: 
fluid density $\rho_f = 1$, fluid viscosity $\mu = 0.035$, solid density $\rho_s = 1.1$, the thickness of wall $\epsilon_s = 0.1$, Young's modulus $E = 0.75\times 10^6$, Poisson's ratio $\sigma = 0.5$
and 
\begin{align} 
C_0 = \frac{E\epsilon_s}{2(1+\sigma)},\quad
C_1 = \frac{E\epsilon_s}{R^2(1-\sigma^2)} , 
\nn 
\end{align} 
where $R = 0.5$ is the width of the domain $\Omega$.
The boundary conditions on the in/out-flow sides ($x=0, x=5$) are defined by $\sigma(\u,p)\n = -p_{\rm in/out}\n$
where 
$$
p_{\text {in }}(t)=\left\{\begin{array}{ll}
\dfrac{p_{\max }}{2}\Big[1-\cos \Big(\dfrac{2 \pi t}{t_{\max }}\Big)\Big] & \text { if } t \leq t_{\max } \\
0 & \text { if } t>t_{\text {max }}
\end{array} \quad, \quad p_{\text {out }}(t)=0 \,\,\, \forall t \in(0, T)\right. .
$$
with  $p_{\rm max} = 1.3333\times 10^4$ and $t_{\rm max} = 0.003$. The top and bottom sides of $\Omega$ are thin structures, and the fluid is initially at rest. We take a uniform triangular partition with $M+1$ vertices in $y$-direction and $10M+1$ vertices in $x$-direction ($h=1/M$), and solve the system by our algorithm where the lowest-order 
Taylor--Hood finite element approximation is used with the spatial mesh size $h=1/64$ ($M=64$), the temporal step size $\tau =h^3$ and the parameter $\beta = 0.5$. 
We present the contour of pressure $p$ in Figure \ref{figpressure} at $t=0.003, 0.009, 0.016, 0.026$  (from top to bottom).
We can see a forward moving pressure wave(red), which reaches the right-end of the domain and gets reflected. The reflected wave is characterized by the different color(blue) of the pressure, which was also observed in \cite{Fer-2013NumMath, Formaggia-CMAME-2001, GGCC-2009}. 
\vskip0.1in 
\begin{figure}[htp!]
\centering
\includegraphics[scale=0.085]{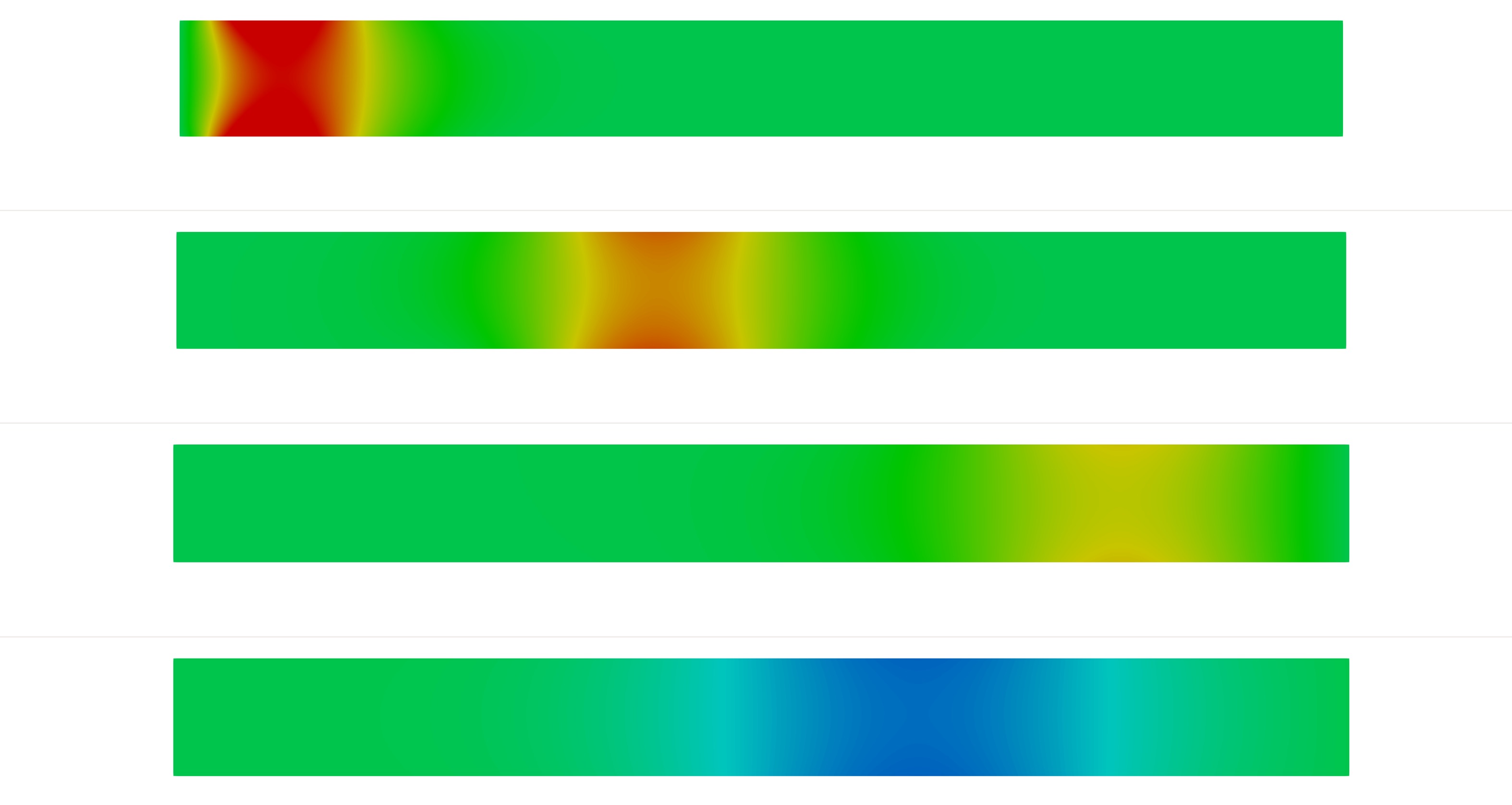}
\caption{The contour of the pressure when $t=0.003, 0.009, 0.016, 0.026$  (from top to bottom)}
\label{figpressure}
\end{figure}

\end{example}

\vskip-0.2in

\begin{example}\upshape 

Finally, we consider the 3D blood flow simulation in common carotid arteries studied in 
\cite{Bukac2018SINUM}. The blood flow is modeled by the Navier-Stokes equation, while  the analysis was presented only for 
the model with Stokes equation. 
The weak form of the arterial wall model is:
\begin{align*}
\rho_s\epsilon_s (\e_{tt},\w)_{\Sigma} + D_1(\e,\w)_{\Sigma} +D_2(\e_t,\w)_{\Sigma} + \epsilon_s(\boldsymbol{\Pi}_s(\e), \nabla_{s}\w)_{\Sigma} = (-\sigma(\u,p)\n,\w)_{\Sigma}
\end{align*}
for any $ \w \in \S$, where  $\nabla_s$ denote the surface gradient on the interface $\Sigma$ and 
$$
\boldsymbol{\Pi}_s({\e})=\frac{E}{1+\sigma^2} \frac{\nabla_s {\e}+\nabla_s^T {\e}}{2}+\frac{E \sigma}{1-\sigma^2} \nabla_s \cdot {\e} \boldsymbol{\mathrm{I}}
$$
for a linearly elastic isotropic structure.
The geometrical domain is a straight cylinder of length 4 cm and radius 0.3 cm, see Fugure \ref{fig_3d}. The hemodynamical parameters used in this model are given in the Table \ref{tab_3d_para}.

\begin{figure}[ht!]
\centering
\begin{tabular}{cc} 
\hskip-1in 
\includegraphics[scale=0.12]{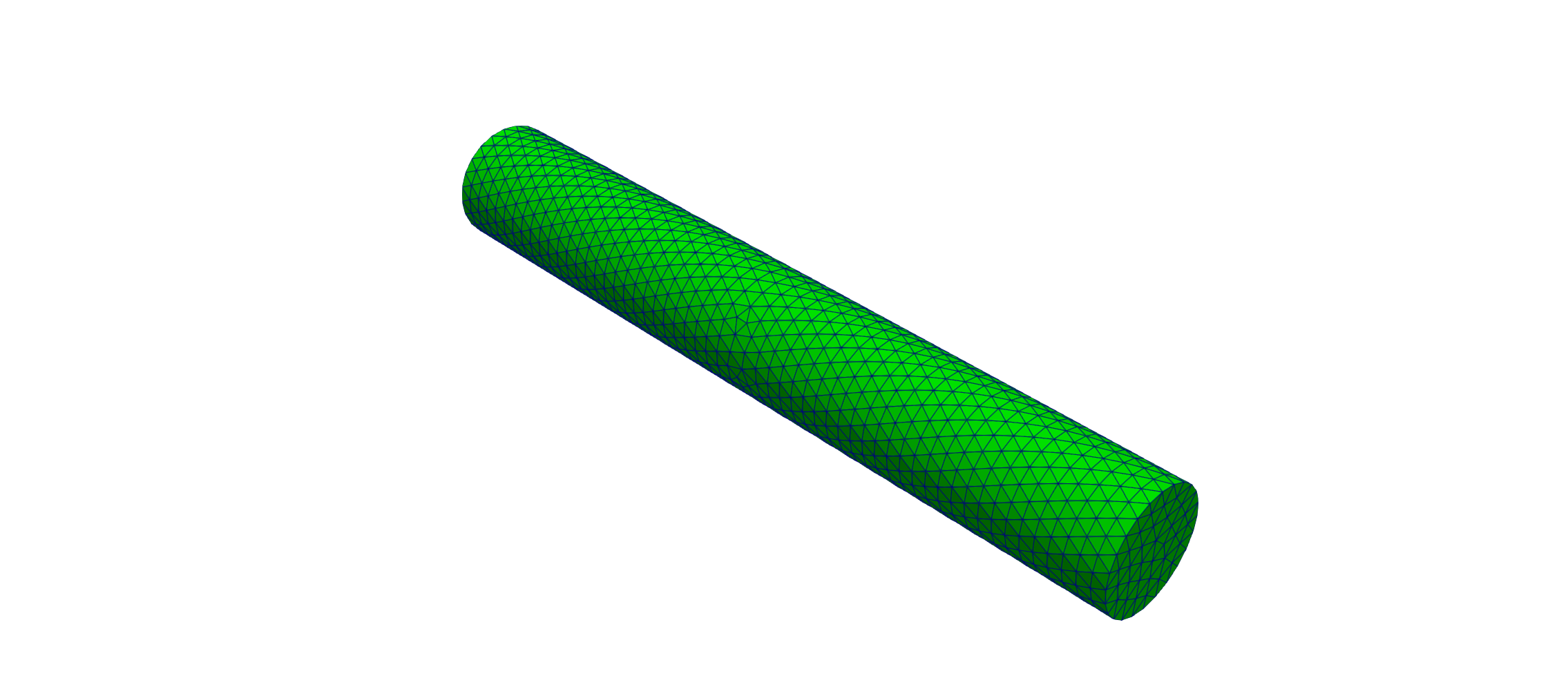} 
& 
\hskip-0.5in 
\includegraphics[scale=0.42]{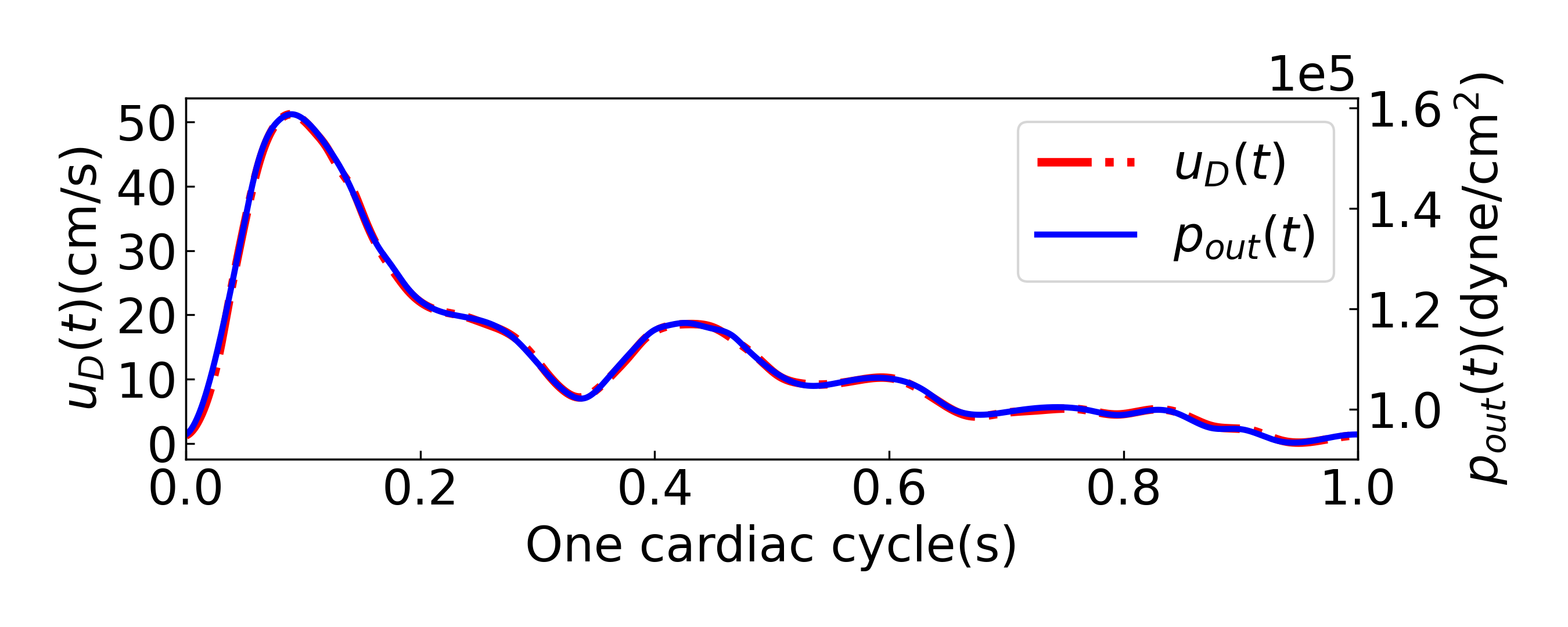}
\end{tabular}
\caption{The geometrical domain and the prescribed inlet velocity and outlet pressure}
\label{fig_3d}
\end{figure}



\begin{table}[ht!]
\centering
\caption{The hemodynamical parameters in the PDE model}
\begin{tabular}{lc|lc}
\hline 
Parameter & Value  & Parameter & Value \\
\hline 
 Wall thickness   $ \epsilon_s (\mathrm{cm})$                                          & 0.06 &
 Poisson's ratio   $\sigma$                                                                        & 0.5 \\
 Fluid viscosity    $\mu (\mathrm{g}/\mathrm{cm}\,\mathrm{s})$              & 0.04   &
 Young's modulo $E (\mathrm{dyne} / \mathrm{cm}^2) $                          & $2.6 \cdot 10^6$ \\
Fluid density       $\rho_f  (\mathrm{g} / \mathrm{cm}^3)$                         & 1 & 
Coefficient          $D_1 (\mathrm{dyne}/ \mathrm{cm}^3)$                        & $6 \cdot 10^5$ \\
Wall density        $\rho_s (\mathrm{g} / \mathrm{cm}^3)$                         & 1.1 & 
Coefficient          $D_2 (\mathrm{dyne} \, \mathrm{s} / \mathrm{cm}^3)$ & $2 \cdot 10^5$ \\
\hline
\label{tab_3d_para}
\end{tabular} 
\end{table}

For the inlet and outlet boundary conditions, we set 
\begin{align*}
\u = (u_D(t)\frac{R^2-r^2}{R^2},0,0) \,\,\,\,  \mbox{on $\Sigma_{in}$}   &&\mbox{and}  
\,\,\,\,\,\,\sigma(\u,p)\n = -p_{out}(t)\n  \,\,\,\,     \mbox{on $\Sigma_{out}$}.
\end{align*}
The waveforms of $u_D(t)$ and $p_{out}(t) $ are given in Figure \ref{fig_3d}.
The pulsatile periodic shapes of waveforms are studied  in \cite{Figueroa-CMAME-2006,Holdsworth-1999}.


The fluid mesh used in this example consists of 11745 tetrahedral elements, while the structure mesh consists of 3786 triangles. We used the $P2-P1$ finite element approximation for the velocity and pressure of the fluid, the P2 finite element approximation for the displacement of the structure. 
For comparison, both classical monolithic scheme and the proposed partitioned scheme are used for solving this example, where 
the parameter $\beta = 0.5$. The initial velocity/pressure is the smooth constant extension of the inlet/outlet boundary data at $t = 0$ for both schemes. 
The terminal time $T = 3$ s  which corresponds to 3 cardiac cycles. We have observed that the periodic pattern was established after 1 cardiac cycle. 
The comparison of the radial displacement magnitude of the artery wall obtained by our partitioned scheme and monolithic scheme at the interface point $(2,0.3,0)$ in the whole 3 cardiac cycles is shown in Figure \ref{fig_3d_displacement}. 
The axial velocity and pressure 
at the center point (2,0,0) in the third cardiac cycle are presented in Figure \ref{fig_3d_velocity}- \ref{fig_3d_press}.

\vskip0.1in 
\begin{figure}[htp!]
\centering
\includegraphics[scale=0.35]{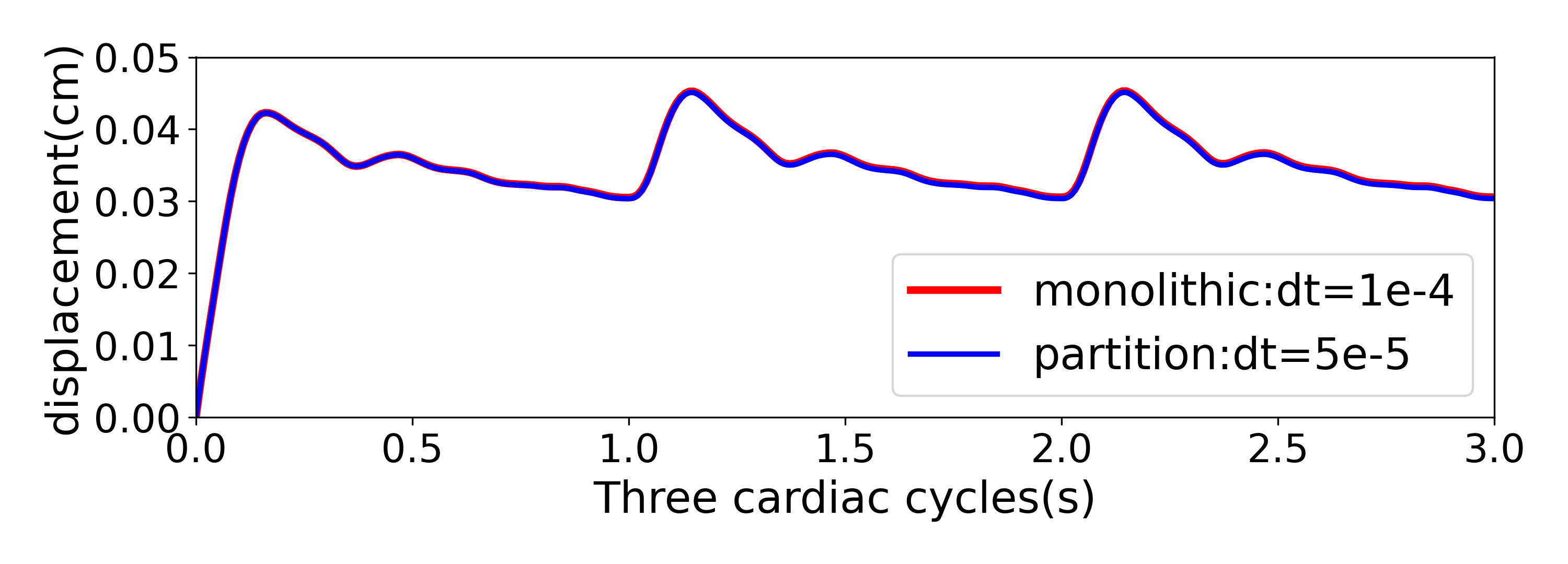}\vspace{-10pt}
\caption{Comparison of the radial displacement}
\label{fig_3d_displacement}
\end{figure}\vspace{-10pt}

\vskip0.1in 
\begin{figure}[htp!]
\centering
\includegraphics[scale=0.4]{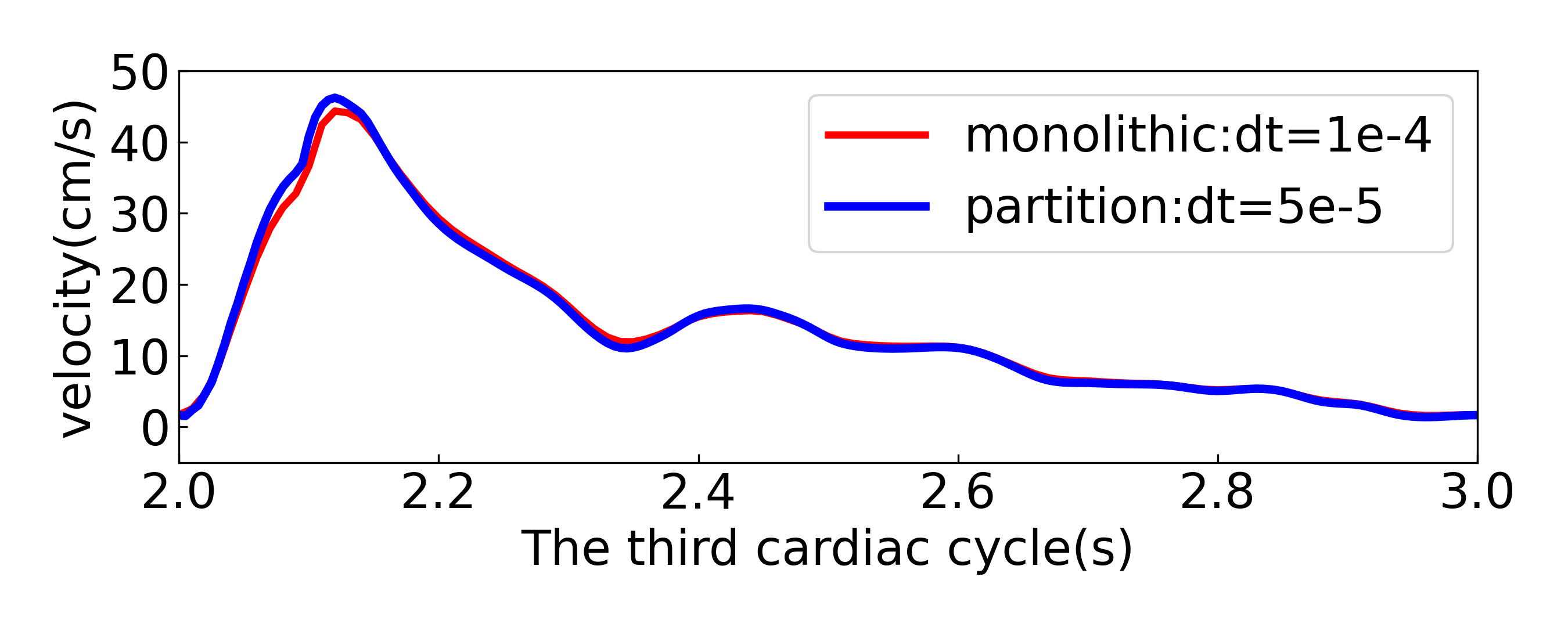}
\vspace{-10pt}
\caption{Comparison of the axial velocity}
\label{fig_3d_velocity}
\end{figure}\vspace{-10pt}

\vskip0.1in 
\begin{figure}[htp!]
\centering
\includegraphics[scale=0.4]{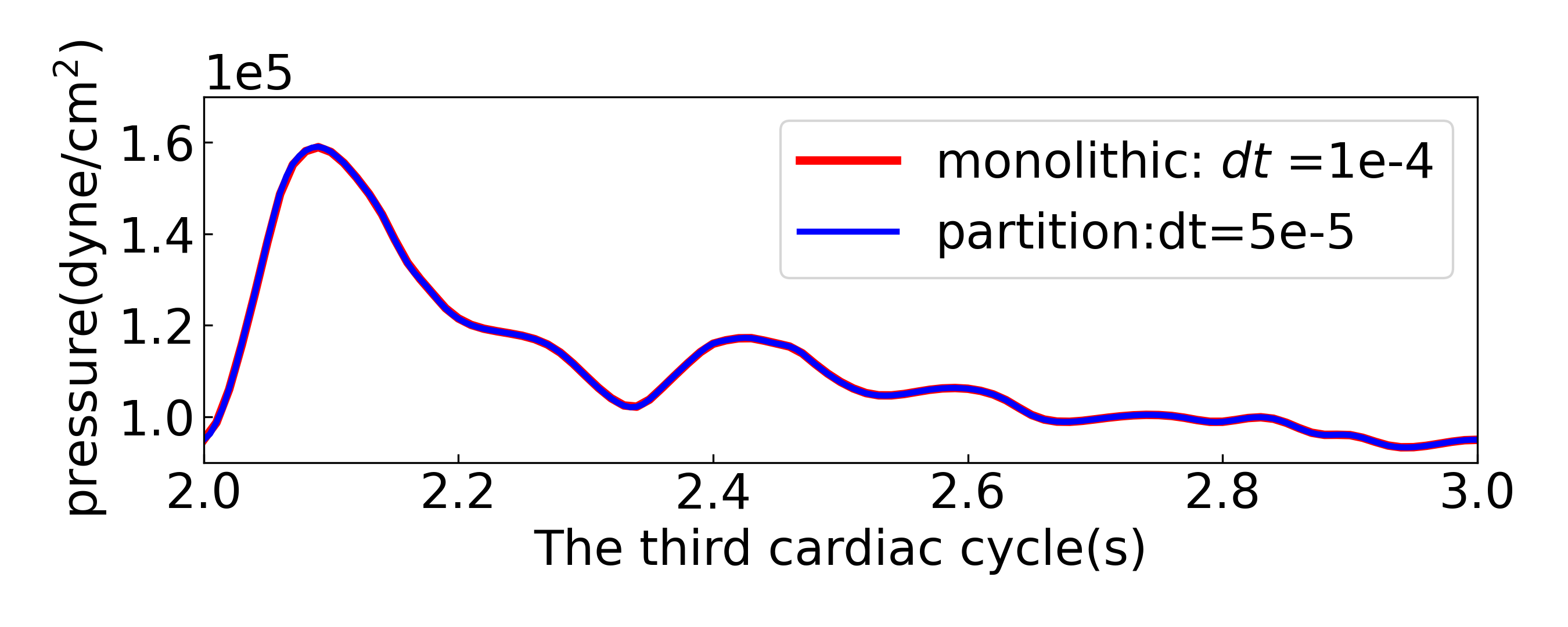}
\vspace{-10pt}
\caption{Comparison of the pressure}
\label{fig_3d_press}
\end{figure}

\end{example}

\section{Conclusion}
We have proposed a new stable fully discrete kinematically coupled scheme which decouples fluid velocity from the structure displacement for solving a thin-structure interaction problem described by \eqref{f-e}--\eqref{bc}. To the best of our knowledge, the optimal-order convergence in $L^2$ norm of spatially finite element methods for such problems has not been established in the previous works. 
Our scheme in \eqref{s1}--\eqref{s2} contains two stabilization terms 
$$
\rho_s \epsilon_s \left ( \frac{\u_h^n - \s_h^n}{\tau}, \,  \frac{\tau}{\rho_s \epsilon_s} \sig(\bv_h, q_h)\cdot \n  \right  )_\Sigma 
\quad\mbox{and}\quad 
\left ( (\sig^n_h-\sig^{n-1}_h) \cdot {\bf n}, \,  \frac{\tau(1+\beta)}{\rho_s \epsilon_s} \sig(\bv_h, q_h)\cdot \n  \right )_\Sigma 
$$
which guarantee the unconditional stability of the method, and an additional parameter $\beta>0$ which is helpful for us to prove optimal-order convergence in the $L^2$ norm for the fully discrete finite element scheme. 
Moreover, we have developed a new approach for the numerical analysis of such thin-structure interaction problems in terms of a newly introduced coupled non-stationary Ritz projection, with rigorous analysis for its approximation properties through analyzing its dual problem, which turns out to be equivalent to a backward evolution equation on the boundary $\Sigma$, i.e., 
	\begin{equation*}
		- \mathcal{L}_s \mathcal{N} {\bb \xi} + \mathcal{N} {\bb \xi} - \partial_t \mathcal{}
		{\bb \xi} = \f \,\,\, \text{on}\,\,\, \Sigma\times[0,T), \,\,\,\mbox{with initial condition}\,\,\, {\bb \xi} (T) = 0 , 
	\end{equation*}
in terms of the Neumann-to-Dirichlet map $\mathcal{N}: H^{-\frac12}(\Sigma)^d\rightarrow H^{\frac12}(\Sigma)^d$ associated to the Stokes equations. Although we have focused on the analysis for the specific kinematically coupled scheme proposed in this article for a thin-structure interaction problem, the new approach developed in this article, including the non-stationary Ritz projection and its approximation properties, may be extended to many other fully discrete monolithic and partitioned coupled algorithms and to more general fluid-structure interaction models.

\section*{Acknowledgement} 
The authors would like to thank the anonymous referees for their valuable comments and suggestions.
This work is supported in part by the NSFC key program (project no. 12231003), NSFC general program (project no. 12071020), Guangdong Provincial Key Laboratory IRADS (2022B1212010006, UIC-R0400001-22) and Guangdong Higher Education Upgrading Plan (UIC-R0400024-21), the Hong Kong Research Grants Council (GRF project no. PolyU15301321), and an internal grant of The Hong Kong Polytechnic University (Work Programme: ZVX7).

\renewcommand{\theequation}{A.\arabic{equation}}
\renewcommand{\thetheorem}{A.\arabic{theorem}}
\renewcommand{\thefigure}{A}
\renewcommand{\thesection}{A}
\setcounter{equation}{0}
\setcounter{theorem}{0}
\section*{Appendix A: Proof of Lemma \ref{dual-reg}}

In this appendix, we prove Lemma \ref{dual-reg} via the following proposition, where equation \eqref{dual-phi-eq} differs from \eqref{dual-problem-Ritz} via a change of variable $t\rightarrow T-t$ in time. 
\begin{proposition}\label{dual-reg-lemma1}
	The initial-boundary value problem
	\begin{subequations}\label{dual-phi-eq}
		\begin{align}
			- \mathcal{L}_s {\bb \phi} + {\bb \phi} &= -\partial_t \sig ({\bb \phi}, q) \n +\f && \text{on}\,\,\, \Sigma\times (0,T]
			 &&\mbox{\rm(the boundary condition)}
			\\
			-\nabla \cdot \sigma ({\bb \phi}, q) + {\bb \phi} &=  0 && \text{in}\,\,\, \Omega\times (0,T] \\
			\nabla \cdot {\bb \phi} &=  0 && \text{in}\,\,\, \Omega \times (0,T] \\
			\sig ({\bb \phi}, q) \n &=0 &&\mbox{at}\,\,\, t=0 &&\mbox{\rm(the initial condition)},
		\end{align} 
	\end{subequations}
has a unique solution $({\bb \phi},q)$ which satisfies the following regularity estimate:
	\begin{align}
		\| {\bb \phi} \|_{L^2 H^2} + \| {\bb \phi} \|_{L^2 H^2 (\Sigma)} + \| q
		\|_{L^2 H^1} + \| \sig ({\bb \phi}, q) \n \|_{L^{\infty} L^2 (\Sigma)}\leq C \| \f \|_{L^2 L^2 (\Sigma)}
	\end{align}	
\end{proposition}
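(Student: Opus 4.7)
The strategy is to eliminate the Stokes component using the Neumann-to-Dirichlet map $\mathcal{N}:H^{-1/2}(\Sigma)^d\to H^{1/2}(\Sigma)^d$ of the modified Stokes system $-\nabla\cdot\sig(\cdot,\cdot)+\cdot=0$, $\nabla\cdot\,\cdot=0$ in $\Omega$ with Neumann data on $\Sigma$. At each $t$, the pair $({\bb \phi}(t),q(t))$ is uniquely determined by ${\bb \xi}(t):=\sig({\bb \phi},q)\n|_\Sigma$ through ${\bb \phi}|_\Sigma=\mathcal{N}{\bb \xi}$, reducing \eqref{dual-phi-eq} to the scalar evolution
\begin{equation*}
\partial_t{\bb \xi}+(I-\mathcal{L}_s)\mathcal{N}{\bb \xi}=\f \quad\text{on}\,\,\, \Sigma\times(0,T],\qquad {\bb \xi}(0)=0.
\end{equation*}
The operator $(I-\mathcal{L}_s)\mathcal{N}$ is symmetric and non-negative with respect to $\langle{\bb \xi}_1,{\bb \xi}_2\rangle:=(\mathcal{N}{\bb \xi}_1,{\bb \xi}_2)_\Sigma$: $\mathcal{N}$ is symmetric positive via the Green identity $(\mathcal{N}{\bb \xi}_1,{\bb \xi}_2)_\Sigma=a_f({\bb \phi}_1,{\bb \phi}_2)+({\bb \phi}_1,{\bb \phi}_2)$, and $I-\mathcal{L}_s$ is symmetric non-negative on $H^1(\Sigma)^d$ by \eqref{as-ass}. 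This is a symmetric parabolic problem, so a Galerkin approximation in the eigenbasis of the symmetric form gives existence and uniqueness.

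\textbf{A priori energy estimates.} The regularity is driven by two layered identities. Testing the evolution equation against $\mathcal{N}{\bb \xi}={\bb \phi}|_\Sigma$ and using $({\bb \xi},{\bb \phi})_\Sigma=a_f({\bb \phi},{\bb \phi})+\|{\bb \phi}\|^2$ gives
\begin{equation*}
\tfrac{1}{2}\tfrac{d}{dt}(a_f({\bb \phi},{\bb \phi})+\|{\bb \phi}\|^2)+a_s({\bb \phi},{\bb \phi})+\|{\bb \phi}\|^2_\Sigma=(\f,{\bb \phi})_\Sigma,
\end{equation*}
so Korn and ${\bb \xi}(0)=0$ yield $\|{\bb \phi}\|_{L^\infty H^1(\Omega)}+\|{\bb \phi}\|_{L^2 H^1(\Sigma)}\lesssim\|\f\|_{L^2 L^2(\Sigma)}$. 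Testing against $-\partial_t{\bb \phi}=-\mathcal{N}\partial_t{\bb \xi}$, together with the symmetry of $\mathcal{N}$ and $a_s$, produces
\begin{equation*}
\tfrac{1}{2}\tfrac{d}{dt}(a_s({\bb \phi},{\bb \phi})+\|{\bb \phi}\|^2_\Sigma)+(\mathcal{N}\partial_t{\bb \xi},\partial_t{\bb \xi})_\Sigma=(\f,\partial_t{\bb \phi})_\Sigma,
\end{equation*}
and Young's inequality with $(\mathcal{N}\f,\f)_\Sigma\lesssim\|\f\|^2_\Sigma$ yields $\|{\bb \phi}\|_{L^\infty H^1(\Sigma)}\lesssim\|\f\|_{L^2 L^2(\Sigma)}$ and $\mathcal{N}^{1/2}\partial_t{\bb \xi}\in L^2 L^2(\Sigma)$.

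\textbf{Bootstrapping to the final regularity.} From the second estimate $\partial_t{\bb \phi}=\mathcal{N}\partial_t{\bb \xi}\in L^2 H^{1/2}(\Sigma)$, and the Dirichlet Stokes regularity \eqref{reg-ass2} applied to the time-differentiated system yields $\partial_t{\bb \phi}\in L^2 H^1(\Omega)$. Rearranging the boundary equation as $(-\mathcal{L}_s+I){\bb \phi}=-\partial_t{\bb \xi}+\f$ and applying \eqref{reg-ass3} with $k=-1/2$ upgrades ${\bb \phi}$ to $L^2 H^{3/2}(\Sigma)$. Feeding this improved Dirichlet trace back into \eqref{reg-ass2} with $k=3/2$ produces ${\bb \phi}\in L^2 H^2(\Omega)$ and $q-\bar q\in L^2 H^1(\Omega)$; the mean $\bar q$ is pinned down through $q=\n\cdot(2\mu\DD({\bb \phi})\n-{\bb \xi})$ on $\Sigma$ which follows from the definition of $\sig$. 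Once $\mathcal{L}_s{\bb \phi}\in L^2 L^2(\Sigma)$ is secured, the boundary equation gives $\partial_t{\bb \xi}\in L^2 L^2(\Sigma)$, and integrating in time from ${\bb \xi}(0)=0$ delivers $\sig({\bb \phi},q)\n\in L^\infty L^2(\Sigma)$, the last piece of the claimed estimate.

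\textbf{Main obstacle.} The central technical difficulty is sequencing the bootstrap without circularity: the chain \emph{energy estimate $\to$ interior Stokes regularity $\to$ $\mathcal{L}_s$ regularity on $\Sigma$ $\to$ higher Stokes regularity} must advance the derivative count monotonically. A further subtlety is the pressure mean, which is not fixed by the Neumann bulk equation and must be controlled through the Neumann boundary traction; and, because the theorem assumes only $\f\in L^2 L^2(\Sigma)$ with no time regularity, the time-differentiation identities should be carried out at the Galerkin level and closed by a density argument rather than applied directly to the weak solution.
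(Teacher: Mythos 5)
Your overall strategy -- reduce \eqref{dual-phi-eq} to a scalar evolution for the traction ${\bb\xi}=\sig({\bb\phi},q)\n$ via the Neumann-to-Dirichlet map, then recover bulk regularity from boundary regularity -- is the same as the paper's, and your first two energy identities are correct. But there is a genuine gap in the bootstrap: it stalls exactly half a derivative short of the claimed estimate, and the step ``Once $\mathcal{L}_s{\bb\phi}\in L^2L^2(\Sigma)$ is secured'' is assuming the conclusion rather than deriving it.

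Concretely, your second energy estimate (testing with $\mathcal{N}\partial_t{\bb\xi}$) delivers $\mathcal{N}^{1/2}\partial_t{\bb\xi}\in L^2L^2(\Sigma)$, i.e.\ $\partial_t{\bb\xi}\in L^2H^{-1/2}(\Sigma)$ (for the $\widetilde{L}^2$-component), and $\|{\bb\phi}\|_{L^\infty H^1(\Sigma)}\lesssim\|\f\|_{L^2L^2(\Sigma)}$. Feeding $\partial_t{\bb\xi}\in L^2H^{-1/2}(\Sigma)$ and $\f\in L^2L^2(\Sigma)\subset L^2H^{-1/2}(\Sigma)$ into the boundary equation and applying \eqref{reg-ass3} with $k=-1/2$ gives ${\bb\phi}|_\Sigma\in L^2H^{3/2}(\Sigma)$, as you say. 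But then $\mathcal{L}_s{\bb\phi}|_\Sigma\in L^2H^{-1/2}(\Sigma)$ only, and the boundary equation returns $\partial_t{\bb\xi}\in L^2H^{-1/2}(\Sigma)$ -- no gain. The bulk$\leftrightarrow$boundary exchange between \eqref{reg-ass2} and \eqref{reg-ass3} is self-consistent at the $H^{3/2}(\Sigma)$ level and cannot promote to $L^2H^2(\Sigma)$ and $\partial_t{\bb\xi}\in L^2L^2(\Sigma)$, which is what the proposition demands. The half-derivative improvement is precisely the parabolic smoothing that a single ``test with $\partial_t$'' estimate does not provide here, because your energy functional uses the degenerate form $\langle\cdot,\cdot\rangle=(\mathcal{N}\cdot,\cdot)_\Sigma$ rather than the $L^2(\Sigma)$ norm on the symmetrized variable.

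The paper closes exactly this gap by an additional device you do not have: it introduces the square-root operators $\mathcal{N}^{1/2},\mathcal{D}^{1/2}$ (built via the spectral decomposition of the compact self-adjoint NtD operator), sets $\bomega=\mathcal{N}^{1/2}\widetilde{{\bb\xi}}$ so that the operator $\mathcal{B}=\mathcal{N}^{1/2}\pi(I-\mathcal{L}_s)\mathcal{N}^{1/2}$ is self-adjoint in standard $L^2(\Sigma)$, and then exploits the crucial fact that the transformed source $\bg=\mathcal{N}^{1/2}\widetilde\f$ gains half a derivative, $\bg\in L^2H^{1/2}(\Sigma)$. Maximal $L^2$-regularity for the self-adjoint semigroup applied in $L^2(\Sigma)$ and $H^1(\Sigma)$ and interpolated to $H^{1/2}(\Sigma)$ then gives $\|\partial_t\bomega\|_{L^2H^{1/2}(\Sigma)}+\|\mathcal{B}\bomega\|_{L^2H^{1/2}(\Sigma)}\lesssim\|\bg\|_{L^2H^{1/2}(\Sigma)}$, which translates back to $\partial_t\widetilde{{\bb\xi}}\in L^2L^2(\Sigma)$ and $\widetilde{{\bb\xi}}\in L^2H^1(\Sigma)$ -- the missing half derivative. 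Two secondary points: you should also address the degeneracy of $\langle\cdot,\cdot\rangle$ (its kernel contains $\n$), which forces the decomposition ${\bb\xi}=\widetilde{{\bb\xi}}+k(t)\n$ with a separate ODE for $k$; and the pressure mean is more cleanly controlled by invoking the Neumann regularity estimate \eqref{reg-ass1} rather than the trace identity you propose, since the Neumann Stokes problem fixes $q$ without a free constant.
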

\begin{proof}
We divide the proof into three parts. In the first part, we introduce the Neumann-to-Dirichlet operator and reformulate \eqref{dual-phi-eq} into an evolution equation \eqref{xi-strong-eq} on the boundary $\Sigma$ with the aid of Neumann-to-Dirichlet operator, and then establish some mapping properties of the Neumann-to-Dirichlet operator to be used in the proof of Proposition \ref{dual-reg-lemma1}. In the second part, we establish the existence, uniqueness and regularity of solutions to an equivalent formulation of \eqref{dual-phi-eq}, i.e., equation \eqref{xi-strong-eq} below. Finally, in the third part, we establish regularity estimates for the solutions to \eqref{dual-phi-eq}. 
	
	{\it Part 1}. We can define the Neumann-to-Dirichlet operator $\mathcal{N} : H^{- 1 / 2}(\Sigma)^d \rightarrow H^{1 / 2} (\Sigma)^d$ as ${\bb \zeta} \mapsto (\mathcal{N}^v{\bb \zeta}) |_{\Sigma} $, with $(\mathcal{N}^v {\bb \zeta}, \mathcal{N }^p
	{\bb \zeta})$ being the solution of the following Stokes equation:
	\begin{subequations}\label{NtD-weak-form}
			\begin{align}
			a_f (\mathcal{N }^v {\bb \zeta} , \bv) - b (\mathcal{N}^p {\bb \zeta}, \bv ) +
			(\mathcal{N}^v {\bb \zeta}, \bv) &= ({\bb \zeta} , \bv)_{\Sigma} && \forall \bv \in H^1(\Omega)^d \\
			b (q, \mathcal{N}^v {\bb \zeta}) &= 0 && \forall q \in L^2(\Omega) . 
		\end{align}
	\end{subequations}
Therefore, 
$$
- \nabla\cdot \sigma(\mathcal{N }^v {\bb \zeta},\mathcal{N }^p {\bb \zeta}) + \mathcal{N }^v {\bb \zeta} = 0 \,\,\,\mbox{in}\,\,\,\Omega 
\quad\mbox{and}\quad
\sigma(\mathcal{N }^v {\bb \zeta},\mathcal{N }^p {\bb \zeta})\n = {\bb \zeta} \,\,\,\mbox{on}\,\,\,\Sigma . 
$$

	Let ${\bb \xi} = \sig({\bb \phi}, q)\n$. Then it is easy to see that problem \eqref{dual-phi-eq} can be equivalently formulated as follows: Find ${\bb \xi} (t) \in H^1(\Sigma)^d $ for $t\in [0,T]$ satisfying the following evolution equation: 
	\begin{equation}\label{xi-strong-eq}
		- \mathcal{L}_s \mathcal{N} {\bb \xi} + \mathcal{N} {\bb \xi} + \partial_t \mathcal{}
		{\bb \xi} = \f \,\,\, \text{on}\,\,\, \Sigma\times(0,T], \,\,\,\mbox{with initial condition}\,\,\, {\bb \xi} (0) = 0.
	\end{equation}
By choosing $\bv=\mathcal{N }^v \bvarphi$ in \eqref{NtD-weak-form} and using relation $b (\mathcal{N}^p {\bb \zeta}, \mathcal{N }^v \bvarphi )=0$ (due to the definition of $\mathcal{N }^v \bvarphi$), we obtain 
\begin{equation}\label{NtD-symmetric}
	 ({\bb \zeta}, \mathcal{N}\bvarphi)_{\Sigma} = a_f (\mathcal{N}^v {\bb \zeta},
	\mathcal{N}^v \bvarphi) + (\mathcal{N}^v {\bb \zeta}, \mathcal{N}^v \bvarphi) \quad
	\forall \bzeta,\bvarphi \in H^{- 1 / 2} (\Sigma)^d .  
\end{equation}
Especially, this implies that 
	\begin{equation}\label{NtD-garding}
		 ({\bb \zeta}, \mathcal{N}{\bb \zeta})_{\Sigma} =2\mu\| \mathcal{N}^v {\bb \zeta} \|^2_f+ \|
		\mathcal{N }^v {\bb \zeta} \|^2 \sim \| \mathcal{N}^v {\bb \zeta} \|_{H^1
			}^2\sim\|\mathcal{N}{\bb \zeta}\|^2_{H^{1/2}(\Sigma)}
			\quad
	\forall {\bb \zeta}  \in H^{- 1 / 2} (\Sigma)^d .
	\end{equation}
By choosing $k=s$ in the regularity result in \eqref{reg-ass1} with $s\geq -1/2,s\in \mathbb{R}$ and noting the trace inequality, we can establish the following mapping property of the Neumann-to-Dirichlet operator:
	\begin{align}\label{NtD-regularity}
		 \| \mathcal{N} {\bb \zeta} \|_{H^{s+1} (\Sigma)}\leq C\|\mathcal{N}^v{\bb \zeta}\|_{H^{s+3/2}(\Omega)} \leq C \| {\bb \zeta} \|_{H^{s} (\Sigma)} \quad \forall s\geq -1/2,s\in \mathbb{R},
	\end{align}
	Note that $\mathcal{N} {\bb \zeta} = 0$ if and only if ${\bb \zeta} = \lambda \n$ for some scalar constant $\lambda\in\mathbb{R}$. This motivates us to define the following subspace of $H^s(\Sigma)^d$ for $s\in \mathbb{R}$:
	\begin{equation*}
		\widetilde{H}^s(\Sigma)^d:=\{{\bb \zeta}\in H^s(\Sigma)^d: ({\bb \zeta},\n)_\Sigma=0 \} . 
	\end{equation*}
	Then we define the Dirichlet-to-Neumann operator $\mathcal{D}:\widetilde{H}^{1/2}(\Sigma)^d\to \widetilde{H}^{-1/2}(\Sigma)^d$ as follows: For ${\bb \zeta}\in \widetilde{H}^{1/2}(\Sigma)^d$, let $(\mathcal{D}^v{\bb \zeta},\mathcal{D}^p{\bb \zeta})$ be the weak solution of 
	\begin{align}\label{DtN-strong-form}
		\begin{aligned}
			-\nabla\cdot\sig(\mathcal{D}^v{\bb \zeta},\mathcal{D}^p{\bb \zeta})+\mathcal{D}^v{\bb \zeta}&=0 && \text{in $\Omega$}\\
			\nabla\cdot \mathcal{D}^v{\bb \zeta}&=0&&\text{in $\Omega$} \\
			 (\mathcal{D}^v{\bb \zeta})|_\Sigma &={\bb \zeta} &&\text{on $\Sigma$} , 
		\end{aligned}
	\end{align}
		and then define $\mathcal{D}{\bb \zeta}\in \widetilde{H}^{-1/2}(\Sigma)^d$ by the following equation
	\begin{equation}\label{DtN-weak-form}
			a_f (\mathcal{D}^v {\bb \zeta} , \bv) - b (\mathcal{D}^p {\bb \zeta}, \bv ) +
			(\mathcal{D}^v {\bb \zeta}, \bv) = (\mathcal{D}{\bb \zeta} , \bv)_{\Sigma} \quad \forall \bv \in H^1(\Omega)^d . 
	\end{equation}
Since the function $\mathcal{D}^p{\bb \zeta}$ in equation \eqref{DtN-strong-form} is only determined up to a constant, we can choose this constant in such a way that the function $\mathcal{D}{\bb \zeta}$ defined by \eqref{DtN-weak-form} lies in $\widetilde{H}^{-1/2}(\Sigma)^d$.  Using trace theorem and Bogovoski's map (cf. \cite[Corollary
1.5]{farwig}) there exists $\bv\in H^1(\Omega)^d$ such that $\bv|_\Sigma=\n$, $\nabla\cdot \bv=\frac{\|\n\|_{\Sigma}^2}{|\Omega|}$ with $\|\bv\|_{H^1}\leq C$, testing \eqref{DtN-weak-form} with such $\bv$, noting the assumption that $(\mathcal{D}{\bb \zeta},\n)_\Sigma=0$, we obtain
\begin{equation}\label{mean-value-p}
	|\overline{\mathcal{D}^p{\bb \zeta}}|\leq C\|\mathcal{D}^v{\bb \zeta}\|_{H^1},
\end{equation}
where $\overline{\mathcal{D}^p{\bb \zeta}}$ is the mean value of $\mathcal{D}^p{\bb \zeta}$ over $\Omega$. Therefore, choosing $k=s$ with $s\geq 1/2,s\in \mathbb{R}$ in \eqref{reg-ass2} and combining \eqref{mean-value-p} leads to the following estimates
\begin{equation}\label{dirichlet-reg-full}
	\|\mathcal{D}^v{\bb \zeta}\|_{H^{s+1/2}}+\|\mathcal{D}^p{\bb \zeta}\|_{H^{s-1/2}}\leq C\|{\bb \zeta}\|_{H^s(\Sigma)} \quad \forall s\geq 1/2,s\in \mathbb{R}.
\end{equation}
From the weak form \eqref{DtN-weak-form}, it follows that
\begin{equation}\label{min-reg-estimate-traction}
	\|\mathcal{D}{\bb \zeta}\|_{H^{-1/2}(\Sigma)}\leq C\left(\|\mathcal{D}^v{\bb \zeta}\|_{H^1}+\|\mathcal{D}^p{\bb \zeta}\|\right).
\end{equation}
Meanwhile when $s\geq 3/2$, by trace inequality we have
\begin{equation}\label{grad-trace}
	\|\mathcal{D}{\bb \zeta}\|_{H^{s-1}(\Sigma)}\leq C\left(\|\mathcal{D}^v{\bb \zeta}\|_{H^{s+1/2}}+\|\mathcal{D}^p{\bb \zeta}\|_{s-1/2}\right)\quad \forall s\geq 3/2,s\in \mathbb{R}.
\end{equation} 
Combining \eqref{min-reg-estimate-traction}, \eqref{grad-trace} and \eqref{dirichlet-reg-full} leads to the following estimates of the Neumann value $\mathcal{D}{\bb \zeta}$ in terms of the Dirichlet value ${\bb \zeta}$: 
	\begin{align}\label{DtN-estimate}
	\begin{aligned}
		 \|\mathcal{D}{\bb \zeta}\|_{H^{-1/2}(\Sigma)} &\leq C\|{\bb \zeta}\|_{H^{1/2}(\Sigma)} &&\forall {\bb \zeta}\in \widetilde{H}^{1/2}(\Sigma)^d , \\
		 \|\mathcal{D}{\bb \zeta}\|_{H^{s-1}(\Sigma)} &\leq C\|{\bb \zeta}\|_{H^{s}(\Sigma)} &&\forall {\bb \zeta}\in \widetilde{H}^{s}(\Sigma)^d.\quad\mbox{(whenever $s\geq 3/2,s\in \mathbb{R}$)}
	\end{aligned}
	\end{align}
	The following complex interpolation of Sobolev spaces hold: 
	\begin{subequations}
		\begin{align}
			&[H^{k}(\Sigma)^d,H^{s}(\Sigma)^d]_{\theta}=H^{\theta s+(1-\theta)k}(\Sigma)^d\quad\forall k,s\in \mathbb{R},\theta\in [0,1];\label{interpolation-1}\\
			& [\widetilde{H}^{k}(\Sigma)^d,\widetilde{H}^{s}(\Sigma)^d]_{\theta}=\widetilde{H}^{\theta s+(1-\theta)k}(\Sigma)^d\quad \forall k,s\in \mathbb{R},\theta\in [0,1]; \label{interpolation-2}
		\end{align}
	\end{subequations}
	where \eqref{interpolation-1} follows from \cite[Proposition 3.1-3.2 of Chapter 4]{Taylor} and \eqref{interpolation-2} follows from \eqref{interpolation-1} because $\widetilde{H}^{s}(\Sigma)^d$ is a retract of $H^s(\Sigma)^d$ for $s\in \mathbb{R}$ via projection $\pi:H^s(\Sigma)^d\to \widetilde{H}^{s}(\Sigma)^d$, with 
	\begin{equation}\label{tilde-projection}
		\pi({\bb \zeta}):={\bb \zeta}-\frac{({\bb \zeta},\n)_\Sigma}{\|\n\|^2_\Sigma}\n.
	\end{equation}  
	Therefore, the following result follows from the complex interpolation between the two estimates in \eqref{DtN-estimate}:  
	 \begin{equation}\label{DtN-regularity}
	 	\|\mathcal{D}{\bb \zeta}\|_{H^{s-1}(\Sigma)}\leq C\|{\bb \zeta}\|_{H^{s}(\Sigma)}\quad \forall {\bb \zeta}\in \widetilde{H}^s(\Sigma)^d\quad\forall s\geq 1/2,s\in \mathbb{R}.
	\end{equation}
	If we restrict the domain of $\mathcal{N}$ to $\widetilde{H}^{-1/2}(\Sigma)^d$, then $\mathcal{N}:\widetilde{H}^{-1/2}(\Sigma)^d\to \widetilde{H}^{1/2}(\Sigma)^d$ and $\mathcal{D}:\widetilde{H}^{1/2}(\Sigma)^d\to \widetilde{H}^{-1/2}(\Sigma)^d$ are inverse maps of each other. This leads to the following norm equivalence: 
   \begin{equation*}
   	\|{\bb \zeta}\|_{H^{-1/2}(\Sigma)}\sim \|\mathcal{N}{\bb \zeta}\|_{H^{1/2}(\Sigma)}\quad \forall {\bb \zeta}\in \widetilde{H}^{-1/2}(\Sigma)^d.
   \end{equation*}
	Similarly, from identity $\mathcal{D}\mathcal{N}{\bb \zeta}=\mathcal{N}\mathcal{D}{\bb \zeta}={\bb \zeta}$ for ${\bb \zeta}\in \widetilde{H}^{1/2}(\Sigma)^d$ 
	and the mapping property in \eqref{DtN-regularity} and \eqref{NtD-regularity}, we conclude that the maps $\mathcal{N}:\widetilde{H}^{s}(\Sigma)^d\to \widetilde{H}^{s+1}(\Sigma)^d$ and $\mathcal{D}:\widetilde{H}^{s+1}(\Sigma)^d\to \widetilde{H}^s(\Sigma)^d$ are also inverse to each other for all $s\geq -1/2,s\in \mathbb{R}$. This implies the following norm equivalence for $s\geq -1/2,s\in \mathbb{R}$: 
	\begin{equation}\label{norm-equiv}
	\|{\bb \zeta}\|_{H^{s}(\Sigma)}\sim \|\mathcal{N}{\bb \zeta}\|_{H^{s+1}(\Sigma)};\;\;\|{\bb \zeta}\|_{H^{s+1}(\Sigma)}\sim \|\mathcal{D}{\bb \zeta}\|_{H^{s}(\Sigma)} \quad \forall \bzeta\in \widetilde{H}^{-1/2}(\Sigma)^d
	\end{equation}
	To facilitate further use, we summarize the properties of the NtD (Neumann to Dirichlet) operator and DtN (Dirichlet to Neumann) operator in the following lemma:
	\begin{lemma}
		~
		\begin{enumerate}
			\item For $s\geq -1/2,s\in \mathbb{R}$, the NtD operator $\mathcal{N}:\widetilde{H}^s(\Sigma)^d\to \widetilde{H}^{s+1}(\Sigma)^d$ and DtN operator $\mathcal{D}:\widetilde{H}^{s+1}(\Sigma)^d\to \widetilde{H}^s(\Sigma)^d$ are bounded and inverse to each other.
			\item With domain ${\rm dom}(\mathcal{D}):=\widetilde{H}^1(\Sigma)^d\subseteq \widetilde{L}^2(\Sigma)^d$, the DtN operator $\mathcal{D}$ is a self-adjoint positive-definite operator on $\widetilde{L}^2(\Sigma)^d$. The NtD operator $\mathcal{N}:\widetilde{L}^2(\Sigma)^d\to \widetilde{L}^2(\Sigma)^d$ is a compact self-adjoint positive-definite operator on $\widetilde{L}^2(\Sigma)^d$.
			\item The square root operators $\mathcal{D}^{1/2}$ and $\mathcal{N}^{1/2}$ are well defined. Moreover, for $s\geq -1/2,s\in \mathbb{R}$, operators $\mathcal{N}^{1/2}:\widetilde{H}^s(\Sigma)^d\to \widetilde{H}^{s+1/2}(\Sigma)^d$ and $\mathcal{D}^{1/2}:\widetilde{H}^{s+1/2}(\Sigma)^d\to \widetilde{H}^s(\Sigma)^d$ are bounded and inverse to each other.
		\end{enumerate}
	\end{lemma}
	\begin{proof}
		The three statements are proved as follows.  
		\begin{enumerate}
			\item The first statement has been proved in \eqref{norm-equiv}.
			\item From \eqref{NtD-symmetric} and \eqref{NtD-garding} it follows that $\mathcal{N}$ is self-adjoint positive-definite operator on $\widetilde{L}^2(\Sigma)^d$. Since $\widetilde{H}^1(\Sigma)^d\to \widetilde{L}^2(\Sigma)^d$ is a compact embedding by Rellich-Kondrachov theorem (cf. \cite[Proposition 3.4 of Chapter 4]{Taylor}), from mapping property \eqref{NtD-regularity} of $\mathcal{N}$ it follows that $\mathcal{N}$ is a compact operator. To verify $\mathcal{D}:{\rm dom}(\mathcal{D})\to \widetilde{L}^2(\Sigma)^d$ is self-adjoint, it suffices to show that if ${\bb \zeta}$ satisfies 
			\begin{equation}\label{self-adj-proof-tmp}
				|({\bb \zeta},\mathcal{D}\bvarphi)_\Sigma|\leq C\|\bvarphi\|_\Sigma\quad \forall \bvarphi\in \widetilde{H}^1(\Sigma)^d,
			\end{equation}
			then ${\bb \zeta}\in \widetilde{H}^1(\Sigma)^d$. From \eqref{self-adj-proof-tmp}, by Riesz representation theorem there exists $\bg \in \widetilde{L}^2(\Sigma)^d$ such that
			\begin{equation*}
				({\bb \zeta},\mathcal{D}\bvarphi)_\Sigma=(\bg,\bvarphi)_\Sigma\quad \forall \bvarphi\in \widetilde{H}^1(\Sigma)^d.
			\end{equation*}
			Especially, taking $\bvarphi=\mathcal{N}\bxi$, it follows that
			\begin{equation*}
				({\bb \zeta},\bxi)_\Sigma=(\bg,\mathcal{N}\bxi)_\Sigma=(\mathcal{N}\bg,\bxi)_\Sigma\quad \forall \bxi\in \widetilde{L}^2(\Sigma)^d.
			\end{equation*}
			Therefore ${\bb \zeta}=\mathcal{N}\bg\in \widetilde{H}^1(\Sigma)^d$, proof of the second statement is complete.
			\item By the spectrum theory of compact self-adjoint operator (cf. \cite[Theorem 5.3.16]{salamon}), $\widetilde{L}^2(\Sigma)^d$ admits an orthornormal basis of eigenvectors $\{\bomega_i\}_{i\in \mathbb{N}}$ of $\mathcal{N}$ and $\mathcal{N}$ has the following expression
			\begin{equation*}
				\mathcal{N}\bzeta=\sum_{i=1}^\infty (\mathcal{N}\bzeta,\bomega_i)_\Sigma\bomega_i=\sum_{i=1}^\infty \lambda_i(\bzeta,\bomega_i)_\Sigma\bomega_i\quad \forall \bzeta\in \widetilde{H}^{-1/2}(\Sigma)^d,
			\end{equation*}
			where $\lambda_i>0$ is the eigenvalue associated with $\bomega_i$. From norm equivalence \eqref{norm-equiv}, we can deduce that for $s\in \mathbb{N}$ there holds
			\begin{equation}\label{to-sequence-norm1}
				\|\bzeta\|_{H^s(\Sigma)}\sim \|\mathcal{D}^{s}\bzeta\|_\Sigma= \left(\sum_{i=1}^\infty \lambda_i^{-2s}|(\bzeta,\bomega_i)_\Sigma|^2 \right)^{1/2}\quad \forall s\in \mathbb{N}.
			\end{equation}
			In view of complex interpolation result of weighted $\ell^2$-sequence spaces (cf. \cite[Theorem 5.5.3]{bergh}), in fact  \eqref{to-sequence-norm1} is valid for all $s\geq 0,s\in \mathbb{R}$ by complex interpolation method. Moreover for $ -1/2\leq s<0$, using norm equivalence \eqref{norm-equiv} and \eqref{to-sequence-norm1} (which is valid for $s\geq 0,s\in \mathbb{R}$) we have
			\begin{equation}\label{to-sequence-norm2}
				\|\bzeta\|_{H^{s}(\Sigma)}\sim \|\mathcal{N}\bzeta\|_{H^{s+1}(\Sigma)}\sim \left(\sum_{i=1}^\infty \lambda_i^{-2s}|(\bzeta,\bomega_i)_\Sigma|^2 \right)^{1/2} \quad \forall s\in \mathbb{R},-1/2\leq s<0.
			\end{equation}
			Combining \eqref{to-sequence-norm1} and \eqref{to-sequence-norm2}, we arrive at
			\begin{equation}\label{to-sequence-norm3}
				\|\bzeta\|_{H^s(\Sigma)}\sim \left(\sum_{i=1}^\infty \lambda_i^{-2s}|(\bzeta,\bomega_i)_\Sigma|^2 \right)^{1/2}\quad \forall s\geq -1/2,s\in \mathbb{R}.
			\end{equation}
			We can define square root operators $\mathcal{N}^{1/2}$ and $\mathcal{D}^{1/2}$ by formula
			\begin{align*}
				\mathcal{N}^{1/2}\bzeta:&=\sum_{i=1}^\infty \lambda_i^{1/2}(\bzeta,\bomega_i)_\Sigma\bomega_i\quad &&\forall \bzeta\in \widetilde{H}^{-1/2}(\Sigma)^d\quad \mbox{(this series converges in $\widetilde{L}^{2}(\Sigma)^d$)}\\
				\mathcal{D}^{1/2}\bzeta:&=\sum_{i=1}^\infty \lambda_i^{-1/2}(\bzeta,\bomega_i)_\Sigma\bomega_i\quad &&\forall \bzeta\in \widetilde{L}^{2}(\Sigma)^d\quad \mbox{(this series converges in $\widetilde{H}^{-1/2}(\Sigma)^d$)},
			\end{align*}
			from the norm equivalence in \eqref{to-sequence-norm3}, it is direct to verify that operators $\mathcal{N}^{1/2}$ and $\mathcal{D}^{1/2}$ are inverse to each other and satisfy the following mapping property for $s\geq -1/2, s\in \mathbb{R}$
			\begin{equation}\label{norm-equiv-1/2}
				\|{\bb \zeta}\|_{H^{s}(\Sigma)}\sim \|\mathcal{N}^{1/2}{\bb \zeta}\|_{H^{s+1/2}(\Sigma)};\;\;\|{\bb \zeta}\|_{H^{s+1/2}(\Sigma)}\sim \|\mathcal{D}^{1/2}{\bb \zeta}\|_{H^{s}(\Sigma)}\quad \forall \bzeta\in \widetilde{H}^{-1/2}(\Sigma)^d.
			\end{equation}
			The proof of third statement is complete.
		\end{enumerate}
	\hfill\end{proof}

	{\it Part 2}. Taking into account of the fact that $\mathcal{N}$ is not injective on $L^2(\Sigma)^d$, for convenience of our further construction we first take the $L^2$-orthogonal projection $\pi:L^2(\Sigma)^d\to \widetilde{L}^2(\Sigma)^d$ defined as in \eqref{tilde-projection} on the both side of \eqref{xi-strong-eq}, and obtain the following equation with solution space contained in $\widetilde{L}^2(\Sigma)^d$: seek $\widetilde{\bxi}\in L^2\widetilde{H}^{1}(\Sigma)^d$ with $\partial_t\widetilde{\bxi}\in L^2\widetilde{L}^2(\Sigma)^d$ satisfying 
	\begin{equation}\label{xi-eq-projed}
		\partial_t\widetilde{\bxi}+\mathcal{A}\widetilde{\bxi}=\widetilde{\f};\quad \widetilde{\bxi}(0)=0,
	\end{equation}
	where $\mathcal{A}=\pi(I-\mathcal{L}_s)\mathcal{N}$ and $\widetilde{\f}=\pi \f$. One difficulty in proving existence and regularity of solution to \eqref{xi-eq-projed} is that the operator $\mathcal{A}:\widetilde{H}^1(\Sigma)^d\to \widetilde{L}^2(\Sigma)^d$ is not a self-adjoint operator in $\widetilde{L}^2(\Sigma)^d$. To overcome this difficulty, we consider the following change of variable $\bomega=\mathcal{N}^{1/2}\widetilde{\bxi}$, and reformulate \eqref{xi-eq-projed} into an abstract Cauchy problem on $\bomega$: seek $\bomega\in L^2\widetilde{H}^{1}(\Sigma)^d$ with $\partial_t\omega\in L^2\widetilde{L}^2(\Sigma)^d$ satisfying 
	\begin{equation}\label{omega-eq}
		\partial_t \bomega+\mathcal{B}\bomega=\bg;\quad \bomega(0)=0,
	\end{equation}
	where $\mathcal{B}:=\mathcal{N}^{1/2}\pi(I-\mathcal{L}_s)\mathcal{N}^{1/2}$ and $\mathbf{g}:=\mathcal{N}^{1/2}\widetilde{\f}$. We summarize some useful properties on the operators $\mathcal{A}$ and $\mathcal{B}$ in the following lemma:
	\begin{lemma}
		~
		\begin{enumerate}
			\item There holds norm equivalence for $0\leq s\leq 1,s\in \mathbb{R}$
			\begin{equation}\label{AB-norm-equiv}
				\|\mathcal{A}\bzeta\|_{H^s(\Sigma)}\sim \|\bzeta\|_{H^{s+1}(\Sigma)};\;\;\;\|\mathcal{B}\bzeta\|_{H^{s}(\Sigma)}\sim \|\bzeta\|_{H^{s+1}(\Sigma)} \quad \forall \bzeta\in \widetilde{H}^{-1/2}(\Sigma)^d.
			\end{equation}
			\item $\mathcal{B}$ is a self-adjoint positive-definite operator on $\widetilde{L}^2(\Sigma)^d$ with domain ${\rm dom}(\mathcal{B}):=\widetilde{H}^1(\Sigma)^d$.
		\end{enumerate}
	\end{lemma} 
	\begin{proof}
		The two statements are proved as follows. 
		\begin{enumerate}
			\item In view of the norm equivalence relations in \eqref{norm-equiv} and \eqref{norm-equiv-1/2}, it suffices to show the following norm equivalence for $-1\leq s\leq 1,s\in \mathbb{R}$
			\begin{equation}\label{pi-Ls-norm-equiv}
				\|\pi (I-\mathcal{L}_s)\bzeta\|_{H^s(\Sigma)} \sim \|\bzeta\|_{H^{s+2}(\Sigma)}\quad \forall \bzeta\in \widetilde{H}^{-1/2}(\Sigma)^d.
			\end{equation} 
			Note that one direction of the norm equivalence in \eqref{pi-Ls-norm-equiv} is given by assumption \eqref{Ls-second-order}. To prove the opposite direction, observe first that
			\begin{equation*}
				\|\pi(I-\mathcal{L}_s)\bzeta\|_{H^{-1}(\Sigma)}\|\bzeta\|_{H^1(\Sigma)}\geq (\pi(I-\mathcal{L}_s)\bzeta,\bzeta)_\Sigma=((I-\mathcal{L}_s)\bzeta,\bzeta)_\Sigma\geq C\|\bzeta\|^2_{H^1(\Sigma)}.
			\end{equation*}
			It follows that \eqref{pi-Ls-norm-equiv} is valid for $s=-1$. Next we note that, by definition \eqref{tilde-projection} of projection $\pi:H^s(\Sigma)^d\to \widetilde{H}^s(\Sigma)^d$, 
			\begin{equation}\label{tmp-diff-estimate}
				\|\pi(I-\mathcal{L}_s)\bzeta-(I-\mathcal{L}_s)\bzeta\|_{H^s(\Sigma)}\leq C|(\bzeta,(I-\mathcal{L}_s)\n)_\Sigma|\leq C\|\bzeta\|_{H^1(\Sigma)}
			\end{equation}
			For $-1\leq s\leq 1,s\in \mathbb{R}$, in view of regularity assumption \eqref{reg-ass3} and the estimate \eqref{tmp-diff-estimate} above, we have
			\begin{align*}
				\|\bzeta\|_{H^{s+2}(\Sigma)}&\leq C\|(I-\mathcal{L}_s)\bzeta\|_{H^s(\Sigma)}\\
				&\leq C\|\pi(I-\mathcal{L}_s)\bzeta\|_{H^s(\Sigma)}+C\|\bzeta\|_{H^1(\Sigma)}\\
				&\leq C\|\pi(I-\mathcal{L}_s)\bzeta\|_{H^s(\Sigma)}\quad\mbox{(this is because \eqref{pi-Ls-norm-equiv} is valid for $s=-1$)}.
			\end{align*}
			Thus \eqref{pi-Ls-norm-equiv} is proved and the first statement follows directly. 
			\item Since $\mathcal{B}$ is obviously symmetric and positive definite on its domain ${\rm dom}(B)=\widetilde{H}^1(\Sigma)^d$. To prove that $\mathcal{B}$ is self-adjoint, it remains to show that the domain of the dual operator $\mathcal{B}'$ defined by 
			$$
			{\rm dom}(\mathcal{B}') = \{\w\in \widetilde L^2(\Sigma)^d: \exists\,\, \bg\in \widetilde{L}^2(\Sigma)^d\,\,\mbox{such that}\,\, (\bomega,\mathcal{B}\bzeta)_\Sigma=(\bg,\bzeta)_\Sigma\quad\forall \bzeta\in \widetilde{H}^1(\Sigma)^d \} , 
			$$
coincides with the domain of $\mathcal{B}$. 
			Therefore, we need to prove that if $\bomega\in \widetilde{L}^2(\Sigma)^d$ satisfies
			\begin{equation}\label{A3-tmp1}
				(\bomega,\mathcal{B}\bzeta)_\Sigma=(\bg,\bzeta)_\Sigma\quad\forall \bzeta\in \widetilde{H}^1(\Sigma)^d,
			\end{equation}
		 	for some $\bg\in \widetilde{L}^2(\Sigma)^d$, then $\bomega\in \widetilde{H}^1(\Sigma)^d$. To this end, we define $\bvarphi\in \widetilde{H}^1(\Sigma)^d$ to be the weak solution of equation
		 	\begin{equation}\label{B-equation-aux}
		 		a_s(\bvarphi,\bzeta)+(\bvarphi,\bzeta)_\Sigma=(\mathcal{D}^{1/2}\bg,\bzeta)_\Sigma\quad \forall\bzeta\in \widetilde{H}^1(\Sigma)^d,
		 	\end{equation}
	 		where the existence and uniqueness of solution to \eqref{B-equation-aux} is due to coercive property: $\|\bzeta\|^2_s+\|\bzeta\|^2_\Sigma\sim \|\bzeta\|_{H^1(\Sigma)}^2,\forall\bzeta\in \widetilde{H}^1(\Sigma)^d$. Equation \eqref{B-equation-aux} means 
	 		\begin{equation*}
	 			\pi(I-\mathcal{L}_s)\bvarphi=\mathcal{D}^{1/2}\bg\in \widetilde{H}^{-1/2}(\Sigma)^d,
	 		\end{equation*}
 			thus by norm equivalence \eqref{pi-Ls-norm-equiv} we have $\bvarphi\in \widetilde{H}^{3/2}(\Sigma)^d$. Now we observe
 			\begin{equation}\label{A3-tmp2}
 				(\mathcal{D}^{1/2}\bvarphi,\mathcal{B}\bzeta)_\Sigma=\left(\pi(I-\mathcal{L}_s)\bvarphi,\mathcal{N}^{1/2}\bzeta\right)_\Sigma=(\mathcal{D}^{1/2}\bg,\mathcal{N}^{1/2}\bzeta)_\Sigma=(\bg,\bzeta)_\Sigma \quad \forall \bzeta\in \widetilde{H}^1(\Sigma)^d
 			\end{equation}
 			By comparing \eqref{A3-tmp1} with \eqref{A3-tmp2} we obtain $\bomega=\mathcal{D}^{1/2}\bvarphi\in \widetilde{H}^{1}(\Sigma)^d$. This completes the proof. 
		\end{enumerate}
	\hfill\end{proof}

	Especially, since $\mathcal{B}$ is a self-adjoint positive-definite operator on $\widetilde{L}^2(\Sigma)^d$ with domain ${\rm dom}(\mathcal{B}):=\widetilde{H}^1(\Sigma)^d$, $-\mathcal{B}$ generates an analytic semigroup $E(t):\widetilde{L}^2(\Sigma)^d\to \widetilde{L}^2(\Sigma)^d$ for $t\geq 0$ (cf. \cite[Example 7.4.5]{salamon}), and the unique solution to \eqref{omega-eq} is given by
	\begin{equation*}
		\bomega(t)=\int_{0}^tE(t-s)\bg(s)ds.
	\end{equation*} 
	Moreover, for self-adjoint semigroup on a Hilbert space, the following $L^2$-maximal regularity estimate holds (cf. \cite[Theorem 7.6.11]{salamon}): 
	\begin{equation}\label{reg-omega-1}
		\|\partial_t \bomega\|_{L^2L^2(\Sigma)}+\|\mathcal{B}\bomega\|_{L^2L^2(\Sigma)}\leq C\|\bg\|_{L^2L^2(\Sigma)} , 
	\end{equation}
	which can be obtained by testing \eqref{omega-eq} with $\partial_t\bomega$. 
	If the source term $\bg$ in \eqref{omega-eq} possesses higher spacial regularity, the solution $\bomega$ also inherits higher spacial regularity. To see this, assume $\bg\in L^2\widetilde{H}^1(\Sigma)^d$, then since
	\begin{equation*}
		\mathcal{B}\bomega(t)=\int_0^t E(t-s)\mathcal{B}\bg(s)ds
	\end{equation*}
	is the solution to \eqref{omega-eq} with the source term replaced by $\mathcal{B}\bg$. Thus again by maximal $L^2$-regularity estimate, we have
	{
	\begin{equation}
		\|\mathcal{B}\partial_t \bomega\|_{L^2L^2(\Sigma)}+\|\mathcal{B}^2\bomega\|_{L^2L^2(\Sigma)}\leq C\|\mathcal{B}\bg\|_{L^2L^2(\Sigma)}.
	\end{equation}}
	By norm equivalence in \eqref{AB-norm-equiv}, it follows that
	\begin{equation}\label{reg-omega-2}
		\|\partial_t \bomega\|_{L^2H^1(\Sigma)}+\|\mathcal{B}\bomega\|_{L^2H^1(\Sigma)}\leq C\|\bg\|_{L^2H^1(\Sigma)}.
	\end{equation}
	Complex interpolation of \eqref{reg-omega-1} and \ref{reg-omega-2} gives
	\begin{equation}
		\|\partial_t \bomega\|_{L^2H^{1/2}(\Sigma)}+\|\mathcal{B}\bomega\|_{L^2H^{1/2}(\Sigma)}\leq C\|\bg\|_{L^2H^{1/2}(\Sigma)}.
	\end{equation}
	Now we take $\bg=\mathcal{N}^{1/2}\widetilde{\f}$, then it is direct to verify that $\widetilde{\bxi}:=\mathcal{D}^{1/2}\bomega$ is the solution to \eqref{xi-eq-projed} and satisfies estimate
	\begin{equation}\label{reg-xi-tilde}
		\|\partial_t\widetilde{\bxi}\|_{L^2L^2(\Sigma)}+\|\widetilde{\bxi}\|_{L^2H^1(\Sigma)}\leq C\|\widetilde{\f}\|_{L^2L^2(\Sigma)},
	\end{equation}
	where we have used norm equivalences in \eqref{AB-norm-equiv} and \eqref{norm-equiv-1/2}. Having obtained the solution $\widetilde{\bxi}$ to equation \eqref{xi-eq-projed}, if we write $\f(t)=\widetilde{\f}(t)+c(t)\n$ then it is direct to verify that $\bxi(t)=\widetilde{\bxi}(t)+k(t)\n$ is the solution to \eqref{xi-strong-eq}, where $k(t)$ is given by 
	\begin{align*}
		\partial_t k&=c-r(\widetilde{\bxi});\quad k(0)=0\\
		r(\widetilde{\bxi})&:=\frac{\big((I-\mathcal{L}_s)\mathcal{N}\widetilde{\bxi},\n\big)_\Sigma}{\|\n\|^2_\Sigma},
	\end{align*} 
	it follows that
	\begin{equation}\label{estimate-k-scalar}
		\|\partial_t k\|_{L^2(0,T)}\leq C(\|\f\|_{L^2L^2(\Sigma)}+\|\widetilde{\bxi}\|_{L^2H^1(\Sigma)})\leq C\|\f\|_{L^2L^2(\Sigma)}.
	\end{equation}
	Therefore, combining \eqref{estimate-k-scalar} and \eqref{reg-xi-tilde} we obtain
	\begin{equation}\label{reg-xi}
		\|\partial_t \bxi\|_{L^2L^2(\Sigma)}+\|\bxi\|_{L^2H^1(\Sigma)}\leq C\|\f\|_{L^2L^2(\Sigma)}
	\end{equation}

	{\it Part 3}. Given the solution ${\bb \xi}$ to equation \eqref{xi-strong-eq}, we define $({\bb \phi}, q) = (\mathcal{N}^v {\bb \xi},
	\mathcal{N}^p {\bb \xi})$. Then ${\bb \xi} = \sig ({\bb \phi}, q)
	\n$ and $\mathcal{N} {\bb \xi} = {\bb \phi} |_{\Sigma}$. Therefore, equation \eqref{xi-strong-eq} can be written as 
	
	\[ - \mathcal{L}_s {\bb \phi} + {\bb \phi} = - \partial_t \sig ({\bb \phi}, q)\n + \f \quad
	\mbox{on}\,\,\, \Sigma\times (0, T],\quad\mbox{with initial condition}\,\,\,  ({\bb \phi}, q) (0) = (0, 0) .  \]
	Thus $({\bb \phi}, q)$ is a solution of equation \eqref{dual-phi-eq}. Since $\sig({\bb \phi},q)\n(0)=0$, it follows that 
	\begin{equation*}
		\|\sig ({\bb \phi}, q) \n \|_{L^\infty L^2 (\Sigma)}\leq C\|\partial_t\sig ({\bb \phi}, q) \n \|_{L^2L^2(\Sigma)}.
	\end{equation*}
	Therefore, $({\bb \phi}, q)$ satsifies the following estimate according to \eqref{reg-xi} and the inequality above: 
	\begin{equation}
		\|\sig ({\bb \phi}, q) \n \|_{L^\infty L^2 (\Sigma)} + \| {\bb \phi} \|_{L^2 H^2
			(\Sigma)} \leq C \| \f \|_{L^2 L^2(\Sigma)}.
		\label{dual-proof-tmp1}
	\end{equation}
	Moreover, since $(\phi,q)$ is the solution of the homogeneous Stokes equation (with boundary value $\phi|_\Sigma=\mathcal{N} {\bb \xi}$), the following two estimates follow from the regularity results of the Stokes equation in  \eqref{reg-ass1}--\eqref{reg-ass2}: 
	\begin{equation}
		\| {\bb \phi} \|_{H^2} + \| \nabla q \| \leq C \| {\bb \phi}
		|_{\Sigma} \|_{H^2 (\Sigma)} ; \quad  \| {\bb \phi} \|_{H^1}
		+ \| q \| \leq C \| \sig({\bb \phi}, q)\n \|_{H^{- 1 / 2}
			(\Sigma)} \label{dual-proof-tmp2}
	\end{equation}
	Combining the estimates in \eqref{dual-proof-tmp1} and \eqref{dual-proof-tmp2}, we obtain the result of Proposition \ref{dual-phi-eq}. 
\hfill\end{proof}

\begin{proof}[Proof of \eqref{dual-reg-estimate}]
Let $\widehat{{\bb \phi}}={\bb \phi}(T-t)$ and $\widehat{q}=q(T-t)$, then $\widehat{{\bb \phi}}$ and $\widehat{q}$ is a solution of \eqref{dual-phi-eq} in Proposition \ref{dual-reg-lemma1}, with source term $\widehat{\f}(t)=\f(T-t)$, thus we have 
\begin{align*}
	&\| {\bb \phi} \|_{L^2 H^2 (\Omega)} + \| {\bb \phi} \|_{L^2 H^2 (\Sigma)} + \| q\|_{L^2 H^1 (\Omega)} + \| \sig ({\bb \phi}, q) \n \|_{L^{\infty} L^2 (\Sigma)}\leq C\|\f\|_{L^2L^2(\Sigma)}.
\end{align*}
The proof of Lemma \ref{dual-reg} is complete.
\hfill\end{proof}

\renewcommand{\theequation}{B.\arabic{equation}}
\renewcommand{\thetheorem}{B.\arabic{theorem}}
\renewcommand{\thefigure}{B}
\renewcommand{\thesection}{B}
\setcounter{equation}{0}
\setcounter{theorem}{0}

\section*{Appendix B: Proof of (\ref{super-approx})}\label{AppendixB}
In this subsection, we assume that $r\geq 2$. Under this assumption, we establish a negative-norm estimate for the Dirichlet Stokes--Ritz projection $R_h^D$ in the following lemma.
\begin{lemma}\label{Lemma-B3-RhD}
For the Dirichlet Stokes--Ritz projection $R_h^D$ defined in \eqref{RhD-def}, the following error estimate holds: 
	\begin{align}\label{RhD-neg-error}
		\|\u - R_h^D \u \|_{H^{-1}} + \|\u - R_h^D \u\|_{H^{-1}(\Sigma)}+ h\| p - R_h^D p \|_{H^{-1}} \le C h^{r+2}  	. 
	\end{align}
\end{lemma}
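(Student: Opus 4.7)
The plan is to establish the three estimates in \eqref{RhD-neg-error} via duality arguments, with the surface estimate handled first since it enters as a boundary contribution in the bulk velocity argument.

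First, for $\|\u - R_h^D\u\|_{H^{-1}(\Sigma)}$: by the explicit formula \eqref{P-tilde-formula0} and the boundary condition in \eqref{RhD-def2}, we have $\u - R_h^D\u = (\u - R_h^S\u) + \lambda(R_h^S\u)\n_h$ on $\Sigma$. The first summand is already $O(h^{r+2})$ in $H^{-1}(\Sigma)$ by the standard negative-norm estimate \eqref{Rhs-neg}. For the second, I would exploit that $\u$ is divergence-free, so $(\u,\n)_\Sigma = \int_\Omega \nabla\cdot\u = 0$, whence $\lambda(R_h^S\u)\|\n_h\|_\Sigma^2 = (R_h^S\u - \u,\n)_\Sigma$; pairing this against the smooth normal field $\n$ and invoking \eqref{Rhs-neg} yields $|\lambda(R_h^S\u)| \le Ch^{r+2}$, which is one order sharper than the $Ch^{r+1}$ bound in \eqref{lambda-RhS-u}. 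This divergence-free trick is the crux of this step.

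Next, for the bulk estimate $\|\u - R_h^D\u\|_{H^{-1}(\Omega)}$, I would set up, for any test $\phi \in H^1(\Omega)^d$, the dual Stokes problem seeking $(\psi,\lambda)$ satisfying $-\nabla\cdot\sigma(\psi,\lambda) + \psi = \phi$ and $\nabla\cdot\psi = 0$ in $\Omega$, with $\psi|_\Sigma = 0$ and periodic boundary conditions elsewhere; its solution enjoys $\|\psi\|_{H^3} + \|\lambda\|_{H^2} \le C\|\phi\|_{H^1}$ by \eqref{reg-ass2}. Integration by parts and Galerkin orthogonality with the test pair $(I_h\psi, I_h\lambda) \in \mathring\X_h^r \times Q_h^{r-1}$ (here Remark \ref{remark-4-2-div-free} is essential to allow pressure test functions in the full $Q_h^{r-1}$ rather than only $Q_{h,0}^{r-1}$) reduce the identity to four interpolation-error terms plus a surface contribution. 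The four bulk terms are each $O(h^{r+2}\|\phi\|_{H^1})$ after combining \eqref{interpolation-error-Xh-Qh} with the $H^1$-order estimates in \eqref{RhD-error}; for instance, $a_f(\u - R_h^D\u, \psi - I_h\psi) \lesssim h^r \cdot h^2 \|\psi\|_{H^3}$, and similarly for the $b(\lambda - I_h\lambda, \cdot)$ and $b(p - R_h^Dp, I_h\psi - \psi)$ terms. The only delicate remaining contribution is the surface term $(\sigma(\psi,\lambda)\n, \u - R_h^D\u)_\Sigma$, which I would bound by $\|\sigma(\psi,\lambda)\n\|_{H^1(\Sigma)} \cdot \|\u - R_h^D\u\|_{H^{-1}(\Sigma)}$, where the first factor is $\le C(\|\psi\|_{H^3} + \|\lambda\|_{H^2}) \le C\|\phi\|_{H^1}$ by trace and the second factor is $O(h^{r+2})$ by the previous step.

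For the pressure estimate $\|p - R_h^Dp\|_{H^{-1}(\Omega)} \le Ch^{r+1}$, I would apply Bogovski's construction to obtain, for any mean-zero $\phi \in H^1(\Omega)$, a field $\bv \in H^2(\Omega)^d$ with $\nabla\cdot\bv = \phi$, $\bv|_\Sigma = 0$ and $\|\bv\|_{H^2} \le C\|\phi\|_{H^1}$. Writing $(p - R_h^Dp, \phi) = b(p - R_h^Dp, \bv)$ and using the defining equation of $R_h^D$ with $\bv_h = I_h\bv \in \mathring\X_h^r$, all interpolation-error contributions are of order $h^{r+1}\|\phi\|_{H^1}$ by \eqref{interpolation-error-Xh-Qh} and \eqref{RhD-error}; the only non-routine contribution after integration by parts in $a_f(\u - R_h^D\u, \bv)$ is the surface piece $(\u - R_h^D\u, \DD(\bv)\n)_\Sigma$, which is controlled by $\|\u - R_h^D\u\|_\Sigma \le Ch^{r+1}$ from \eqref{RhD-error} together with the trace bound $\|\DD(\bv)\n\|_\Sigma \le C\|\bv\|_{H^2}$, giving the desired $O(h^{r+1}\|\phi\|_{H^1})$. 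The main obstacle throughout is the chain of dependencies: the bulk $H^{-1}(\Omega)$ estimate would degrade to $h^{r+1}$ if the surface estimate were not already at the sharper order $h^{r+2}$, which in turn relies on the observation that $\nabla\cdot\u = 0$ upgrades the scalar correction $\lambda(R_h^S\u)$ by one full power of $h$ beyond the naive bound.
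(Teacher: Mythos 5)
Your proposal matches the paper's approach: the key step — obtaining $\|\u - R_h^D\u\|_{H^{-1}(\Sigma)} \le Ch^{r+2}$ via the decomposition $\u - R_h^D\u = (\u - R_h^S\u) + \lambda(R_h^S\u)\n_h$, the divergence-free cancellation $(\u,\n)_\Sigma = 0$, and the negative-norm estimate \eqref{Rhs-neg} — is exactly what the paper's sketch does, and your subsequent bulk and pressure duality arguments (higher-regularity dual Stokes problem for the velocity, Bogovski construction for the pressure, with the surface contribution absorbed by the sharper $H^{-1}(\Sigma)$ bound) are precisely the ``routine duality arguments'' the paper invokes without spelling them out. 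Your filled-in details, including the observation that \eqref{full-div-free} from Remark~\ref{remark-4-2-div-free} is what permits testing against $I_h\lambda \in Q_h^{r-1}$ rather than only $Q_{h,0}^{r-1}$, are correct.
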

\begin{proof}[Skectch of Proof]
	From the definition of $R_h^D\u$ in \eqref{RhD-def} we can see that the following relation holds on the boundary $\Sigma$: 
	\begin{equation*}
		R_h^S\u-R_h^D\u=\frac{(R_h^S\u,\n)_\Sigma}{\|\n_h\|^2}\n_h=\frac{(R_h^S\u-\u,\n)_\Sigma}{\|\n_h\|^2}\n_h . 
	\end{equation*}
	Since  
	\begin{equation*}
		\frac{(R_h^S\u-\u,\n)_\Sigma}{\|\n_h\|^2}\lesssim C\|R_h^S\u-\u\|_{H^{-1}(\Sigma)}\leq Ch^{r+2},
	\end{equation*}
	it follows that $\|\u-R_h^D\u\|_{H^{-1}(\Sigma)}\leq Ch^{r+2}$. Then \eqref{RhD-neg-error} follows from the same routine of duality argument for the Dirichlet Stokes--Ritz projection.
\hfill\end{proof}

Next, we note that $\|(R_{sh}\u-\u)(0)\|$ also satisfies negative norm estimate below. 
\begin{lemma}\label{lemma-B4-sh}
	For the $R_{{sh}} \u (0)$ defined in \eqref{ritz-initial1}, the following negative-norm estimate  holds: 
	\begin{align}\label{Rsh-u-neg}
		\| (R_{{sh}} \u - \u) (0) \|_{H^{-1}(\Sigma)}\leq C h^{r + 2} . 
	\end{align}
\end{lemma}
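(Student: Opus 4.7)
\textbf{Proof plan for Lemma \ref{lemma-B4-sh}.}
The plan is to run a duality argument in $H^{-1}(\Sigma)$ against the defining relation \eqref{ritz-initial1} of $R_{sh}\u(0)$, using the negative-norm estimates of the Dirichlet Stokes--Ritz projection from Lemma \ref{Lemma-B3-RhD} to gain one extra power of $h$ beyond the $L^2(\Sigma)$ bound \eqref{ritz-initial-estimate-Rsh}. For any $\bg \in H^1(\Sigma)^d$, the regularity assumption \eqref{reg-ass3} with $k=1$ produces $\bvarphi \in H^3(\Sigma)^d$ solving
\begin{equation*}
a_s(\bvarphi, \w) + (\bvarphi, \w)_\Sigma = (\bg, \w)_\Sigma \quad \forall \w \in H^1(\Sigma)^d, \qquad \|\bvarphi\|_{H^3(\Sigma)} \le C\|\bg\|_{H^1(\Sigma)},
\end{equation*}
so that $((R_{sh}\u-\u)(0),\bg)_\Sigma = a_s((R_{sh}\u-\u)(0),\bvarphi) + ((R_{sh}\u-\u)(0),\bvarphi)_\Sigma$.

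The key preparatory step is to build a bulk extension $\bvarphi_E \in H^3(\Omega)^d$ of $\bvarphi$ with \emph{constant divergence}. First I take any smooth trace-extension $\widetilde{\bvarphi}$ with $\widetilde{\bvarphi}|_\Sigma = \bvarphi$ and $\|\widetilde{\bvarphi}\|_{H^3(\Omega)} \le C\|\bvarphi\|_{H^3(\Sigma)}$, set $c := |\Omega|^{-1}(\bvarphi,\n)_\Sigma$, and invoke a Bogovski-type correction (cf.\ \cite[Corollary 1.5]{farwig}) to find $\bvarphi_0 \in H^3(\Omega)^d$ with $\bvarphi_0|_\Sigma = 0$ and $\nabla\cdot \bvarphi_0 = c - \nabla\cdot\widetilde{\bvarphi}$ (the right side has zero mean by the divergence theorem). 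Then $\bvarphi_E := \widetilde{\bvarphi} + \bvarphi_0$ satisfies $\bvarphi_E|_\Sigma = \bvarphi$, $\nabla\cdot \bvarphi_E \equiv c$ on $\Omega$, and $\|\bvarphi_E\|_{H^3(\Omega)} \le C\|\bg\|_{H^1(\Sigma)}$.

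Now I split $\bvarphi = (\bvarphi - I_h^S\bvarphi) + I_h^S\bvarphi$. The non-projected part is bounded using $\|(R_{sh}\u-\u)(0)\|_{H^1(\Sigma)} \le C h^r$ (a consequence of \eqref{ritz-initial-estimate-Rsh} together with \eqref{Korn-ineq}) and the $r\ge 2$ interpolation estimate $\|\bvarphi - I_h^S\bvarphi\|_{H^1(\Sigma)} \le Ch^2\|\bvarphi\|_{H^3(\Sigma)}$, producing $Ch^{r+2}\|\bg\|_{H^1(\Sigma)}$. The projected part is rewritten via \eqref{ritz-initial1} with $\w_h = I_h^S\bvarphi$ and extension $E_hI_h^S\bvarphi = I_h^X\bvarphi_E$ (admissible by $I_h^X\bvarphi_E|_\Sigma = I_h^S\bvarphi$ from assumption (A3)) as
\begin{equation*}
-\,a_f(\bb\psi_u, I_h^X\bvarphi_E) + b(\psi_p, I_h^X\bvarphi_E) - (\bb\psi_u, I_h^X\bvarphi_E), \quad \bb\psi_u := (\partial_t\u - R_h^D\partial_t\u)(0),\ \ \psi_p := (\partial_tp - R_h^D\partial_tp)(0).
\end{equation*}
I split each slot as $I_h^X\bvarphi_E = \bvarphi_E + (I_h^X\bvarphi_E - \bvarphi_E)$. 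The $(I_h^X\bvarphi_E - \bvarphi_E)$ pieces combine the estimates $\|\bb\psi_u\|_{H^1} + \|\psi_p\| \le Ch^r$ and $\|\bb\psi_u\| \le Ch^{r+1}$ from Lemma \ref{lemma:Dirichlet-Ritz} with $\|I_h^X\bvarphi_E - \bvarphi_E\|_{H^1} \le Ch^2\|\bvarphi_E\|_{H^3}$ and $\|I_h^X\bvarphi_E - \bvarphi_E\| \le Ch^3\|\bvarphi_E\|_{H^3}$, each giving $O(h^{r+2})$. For the slots against $\bvarphi_E$ itself, I integrate by parts
\begin{equation*}
a_f(\bb\psi_u, \bvarphi_E) = -\,2\mu(\bb\psi_u, \nabla\cdot\DD\bvarphi_E) + 2\mu(\bb\psi_u, \DD\bvarphi_E\cdot\n)_\Sigma,
\end{equation*}
and invoke the negative-norm bounds $\|\bb\psi_u\|_{H^{-1}} + \|\bb\psi_u\|_{H^{-1}(\Sigma)} \le Ch^{r+2}$ from \eqref{RhD-neg-error} against the $H^1$-norms of $\nabla\cdot\DD\bvarphi_E$ and $\DD\bvarphi_E\cdot\n$ (the latter controlled by the $H^{3/2}(\Sigma)\hookrightarrow H^1(\Sigma)$ trace of $\DD\bvarphi_E\in H^2(\Omega)$); the mass term $(\bb\psi_u,\bvarphi_E)$ is handled by $\|\bb\psi_u\|_{H^{-1}}\|\bvarphi_E\|_{H^1}$.

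The hard part, and the reason for the Bogovski correction, is the pressure term. Since \eqref{RhD-p-neg-err} only gives $\|\psi_p\|_{H^{-1}(\Omega)} \le Ch^{r+1}$, a direct bound of $b(\psi_p, \bvarphi_E) = (\psi_p, \nabla\cdot\bvarphi_E)$ falls one power of $h$ short. This is bypassed by the construction: because $\nabla\cdot\bvarphi_E \equiv c$ is a constant and $\psi_p \in L^2_0(\Omega)$ (as $R_h^Dp - p$, and hence its time derivative, has zero mean by Definition \ref{ritz-fluid-dirichlet-stokes}), we obtain $b(\psi_p, \bvarphi_E) = c(\psi_p, 1) = 0$ identically. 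Assembling all the above contributions yields $|((R_{sh}\u-\u)(0),\bg)_\Sigma| \le Ch^{r+2}\|\bg\|_{H^1(\Sigma)}$ for every $\bg \in H^1(\Sigma)^d$, and taking the supremum over unit $\bg$ establishes \eqref{Rsh-u-neg}.
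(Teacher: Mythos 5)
Your proof is correct, and on the most delicate point it takes a genuinely different route from the paper's. Both arguments run the same duality set-up — solve $-\mathcal{L}_s\bvarphi+\bvarphi=\bg$ on $\Sigma$ with $\|\bvarphi\|_{H^3(\Sigma)}\le C\|\bg\|_{H^1(\Sigma)}$, extend to $\Omega$, split off the interpolation error, and then use \eqref{ritz-initial1} together with integration by parts and the negative-norm bounds $\|\bb\psi_u\|_{H^{-1}}+\|\bb\psi_u\|_{H^{-1}(\Sigma)}\le Ch^{r+2}$ from Lemma \ref{Lemma-B3-RhD} on the $a_f$-term. Where you diverge is the pressure contribution $b(\psi_p,\cdot)$. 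The paper extends $\bvarphi$ to $\Omega$ arbitrarily and then bounds $b(\psi_p,\psi)\lesssim\|\psi_p\|_{H^{-2}}\|\psi\|_{H^3}$, invoking $\|(R_h^D\partial_tp-\partial_tp)(0)\|_{H^{-2}}\le Ch^{r+2}$ — a negative-norm estimate one order beyond the $h\|p-R_h^Dp\|_{H^{-1}}\le Ch^{r+2}$ that is actually stated in \eqref{RhD-neg-error}, and which the paper never proves. You instead build a Bogovski-corrected extension $\bvarphi_E\in H^3(\Omega)^d$ with $\nabla\cdot\bvarphi_E\equiv c$, so that $b(\psi_p,\bvarphi_E)=c\int_\Omega\psi_p=0$ because $R_h^D$ is normalized to preserve mean-zero pressure. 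This kills the pressure term outright and avoids the unproved $H^{-2}$ estimate entirely, at the price of needing $H^3$-regularity of the Bogovski correction (which the paper itself tacitly uses at comparable regularity in the proof of the companion pressure estimate). The remaining interpolation-error pieces involving $I_h^X\bvarphi_E-\bvarphi_E$ are handled exactly as in the paper, and all give $O(h^{r+2})$ under the standing assumption $r\ge2$. There is a harmless global sign slip in your rewriting of \eqref{ritz-initial1} (you should get $+a_f(\bb\psi_u,\cdot)-b(\psi_p,\cdot)+(\bb\psi_u,\cdot)$ after moving everything to the other side), but since you only use absolute-value bounds this does not affect the conclusion. Overall your version is arguably tighter, in that every estimate it invokes is explicitly available.
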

\begin{proof}
We introduce a dual equation 
	\begin{equation}
		- \mathcal{L}_s \psi + \psi = \varphi \quad \text{$\psi$
			has periodic boundary condition on $\Sigma$}. 
	\end{equation}
	The regularity assumption in \eqref{reg-ass3} implies that $\| \psi \|_{H^3(\Sigma)} \leq \| \varphi \|_{H^1(\Sigma)}$. We can extend $\psi$ to be a function (still denoted by $\psi$) which is defined in $\Omega$ with periodic boundary condition and satisfies $\| \psi \|_{H^3} \leq C \| \psi\|_{H^3 (\Sigma)}$. Then the following relation can be derived: 
	\begin{align*}
		((R_{{sh}} \u - \u) (0),\varphi)_{\Sigma} = & a_s (_{} (R_{{sh}} \u -
		\u) (0), \psi)  + ((R_{{sh}} \u - \u) (0), \psi)_{\Sigma}\\
		=&a_s ((R_{{sh}} \u
		- \u) (0), \psi - I_h \psi)_{\Sigma} + ((R_{{sh}} \u - \u) (0), (\psi -
		I_h \psi))_{\Sigma}\\
		& - a_f ((R_h^D \partial_t \u - \partial_t \u) (0), I_h \psi) + 
		b((R_h^D \partial_t p - \partial_t p) (0), I_h \psi)\\
		& - ((R^D_h\partial_t \u - \partial_t \u) (0), I_h \psi)\\
		\leq & C h^{r + 2} \| \psi \|_{H^3 (\Sigma)} + | a_f ((R_h^D
		\partial_t \u - \partial_t \u) (0), \psi)|+ |b((R_h^D \partial_t p
		\partial_t  - p) (0), \psi)|\\
		& + |((R^D_h \partial_t
		\u - \partial_t \u) (0), \psi) | . 
	\end{align*}
	Since 
	\begin{align*}
		(\DD (R_h^D \partial_t \u - \partial_t \u) (0), \DD \psi)&= - ((R_h^D \partial_t \u -
		\partial_t \u) (0), \nabla \cdot \DD \psi) + ((R_h^D \partial_t \u -
		\partial_t \u) (0), \n \cdot \DD \psi)_{\Sigma}\\
		& \lesssim \left(\|(R_h^D \partial_t \u -
		\partial_t \u) (0)\|_{H^{-1}(\Omega)}+\|(R_h^D \partial_t \u -
		\partial_t \u) (0)\|_{H^{-1}(\Sigma)}\right)\|\psi\|_{H^3(\Omega)}\\
		&\leq Ch^{r+2}\|\varphi\|_{H^1(\Sigma)} 
	\end{align*}
	and
	\begin{align*} 
			&b( (R_h^D \partial_t p - \partial_t p) (0), \psi)
			\lesssim C \| (R_h^D \partial_t p - \partial_t p) (0) \|_{H^{-
					2}} \| \psi \|_{H^3}\leq Ch^{r+2}\|\varphi\|_{H^1(\Sigma)}\\
			&((R^D_h \partial_t
			\u - \partial_t \u) (0), \psi)\lesssim \|(R^D_h \partial_t
			\u - \partial_t \u) (0)\|_{H^{-1}}\|\psi\|_{H^1}\leq Ch^{r+2}\|\varphi\|_{H^1(\Sigma)} , 
	\end{align*} 
	summing up the estimates above yields the result in \eqref{Rsh-u-neg}.  
	The proof is complete.  
\hfill\end{proof}

\begin{lemma}\label{lemma:B5}
	Let $e^u_h := (R_h \u - R_h^D \u) (0)$ and $e_h^p:= (R_h p - R_h^D
	p)(0)$. Then the following estimates hold: 
	\begin{align}
		& \| e_h^u \|_{H^1} + \| e_h^p \| \leq \text{Ch}^{r + 1 / 2} , \label{B4-H1}\\
		& \| e_h^u \|_{H^{- 1} (\Sigma)}+\| e_h^u \|_{H^{- 1 / 2}} +\|e_h^p \|_{H^{- 3 / 2}}
		\leq C h^{r + 2}  \label{neg-norm-appendix} . 
	\end{align}
\end{lemma}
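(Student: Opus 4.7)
The plan is to obtain $(e_h^u,e_h^p)$ as the solution of a discrete Stokes system with zero bulk data and non-homogeneous boundary data, and then split the proof into an energy estimate and a duality estimate. Concretely, subtracting the defining system \eqref{d3-2a2b} for $(R_h\u,R_hp)(0)$ from the definition \eqref{RhD-def} of $(R_h^D\u,R_h^Dp)(0)$ and using $(e_h^p,1)=0$ shows that $(e_h^u,e_h^p)\in\X_h^r\times Q_{h,0}^{r-1}$ solves
\begin{align*}
	a_f(e_h^u,\bv_h)-b(e_h^p,\bv_h)+(e_h^u,\bv_h)&=0\qquad\forall\bv_h\in\mathring\X_h^r,\\
	b(q_h,e_h^u)&=0\qquad\forall q_h\in Q_{h,0}^{r-1},\\
	e_h^u\big|_\Sigma&=g_h:=\widetilde P (R_{sh}\u-R_h^S\u)(0).
\end{align*}
Writing $g_h=d_h-\lambda(d_h)\n_h$ with $d_h:=(R_{sh}\u-R_h^S\u)(0)$, the triangle inequality together with Lemma \ref{Lemma:Rshu0}, the standard estimate $\|d_h\|_{H^1(\Sigma)}\le Ch^r$, and the negative-norm bounds from Lemma \ref{lemma-B4-sh} and \eqref{Rhs-neg} yields $\|d_h\|_{H^{-1}(\Sigma)}\le Ch^{r+2}$ and, by interpolation, $\|d_h\|_{H^{1/2}(\Sigma)}\le Ch^{r+1/2}$. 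Because $(\u,\n)_\Sigma=0$, the scalar $\lambda(d_h)$ can be recast through duality against the smooth $\n$ as $O(h^{r+2})$, and $\|\n_h\|_{H^{1/2}(\Sigma)}\le Ch^{-1/2}$ by inverse estimate makes the correction term of lower order, so $\|g_h\|_{H^{1/2}(\Sigma)}\le Ch^{r+1/2}$ and $\|g_h\|_{H^{-1}(\Sigma)}\le Ch^{r+2}$.

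For \eqref{B4-H1} I would extend $g_h$ to $\hat{\bv}_h:=I_h^X\bv\in\X_h^r$, where $\bv\in H^1(\Omega)^d$ is any continuous extension satisfying $\|\bv\|_{H^1}\le C\|g_h\|_{H^{1/2}(\Sigma)}$; then $\hat{\bv}_h|_\Sigma=g_h$ and $\|\hat{\bv}_h\|_{H^1}\le Ch^{r+1/2}$. Testing the subtracted system with $e_h^u-\hat{\bv}_h\in\mathring\X_h^r$ and using Korn's inequality controls $\|e_h^u-\hat{\bv}_h\|_{H^1}$ by $\|\hat{\bv}_h\|_{H^1}$, and the inf-sup condition \eqref{inf-sup-condition0} then controls $\|e_h^p\|$, giving \eqref{B4-H1}.

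For the negative-norm estimates in \eqref{neg-norm-appendix}, $\|e_h^u\|_{H^{-1}(\Sigma)}=\|g_h\|_{H^{-1}(\Sigma)}\le Ch^{r+2}$ is immediate. The other two norms are obtained via a duality argument against the Dirichlet Stokes problem
\begin{equation*}
	-\nabla\cdot\sigma(\bb\phi,\psi)+\bb\phi=\f,\quad \nabla\cdot\bb\phi=\chi,\quad \bb\phi|_\Sigma=0,\quad \int_\Omega\psi=0,
\end{equation*}
with either $(\f,\chi)=(\f,0)$, $\f\in H^{1/2}(\Omega)^d$ (for the $H^{-1/2}$ bound on $e_h^u$), or $(\f,\chi)=(0,\varphi)$ with $\varphi\in H^{3/2}(\Omega)$ of zero mean (for the $H^{-3/2}$ bound on $e_h^p$). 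The regularity estimate \eqref{reg-ass2} with $k=2$ gives $\|\bb\phi\|_{H^{5/2}}+\|\psi\|_{H^{3/2}}\le C(\|\f\|_{H^{1/2}}+\|\chi\|_{H^{3/2}})$, and trace inequality gives $\|\sigma(\bb\phi,\psi)\n\|_{H^1(\Sigma)}\le C(\cdot)$. Integrating by parts in $(e_h^u,\f)=(e_h^u,-\nabla\cdot\sigma(\bb\phi,\psi)+\bb\phi)$ or in $(e_h^p,\varphi)=b(e_h^p,\bb\phi)$ and applying Galerkin orthogonality against $(I_h\bb\phi,I_h\psi)$ (Scott--Zhang preserves zero traces, so $I_h\bb\phi\in\mathring\X_h^r$; note also that $(e_h^u,\n)_\Sigma=0$ by construction of $\widetilde P$, so $b(1,e_h^u)=0$ and the pressure equation can be tested against all of $Q_h^{r-1}$, including $I_h\psi$ without mean correction) reduces everything to interpolation errors of order $h^{3/2}$ multiplied by the $H^1$-bound $h^{r+1/2}$ from Step 1, plus the boundary term $(g_h,\sigma(\bb\phi,\psi)\n)_\Sigma$, which is $\le\|g_h\|_{H^{-1}(\Sigma)}\|\sigma\n\|_{H^1(\Sigma)}\le Ch^{r+2}(\|\f\|_{H^{1/2}}+\|\chi\|_{H^{3/2}})$.

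The main obstacle is precisely this boundary pairing: because $\bb\phi|_\Sigma=0$ does \emph{not} force $\sigma(\bb\phi,\psi)\n$ to vanish, the term cannot be absorbed by an interpolation error and is the reason the whole argument collapses unless one has the sharp bound $\|g_h\|_{H^{-1}(\Sigma)}\le Ch^{r+2}$. That bound is what forces the use of the more elaborate Ritz projection $R_{sh}\u(0)$ built in \eqref{ritz-initial1} (so Lemma \ref{lemma-B4-sh} is available) instead of, say, a Scott--Zhang interpolant of $\u(0)$, and is the reason $r\ge 2$ is needed — one needs both the $H^{-1}(\Sigma)$ superconvergence of $R_h^S$ in \eqref{Rhs-neg} and the dual $H^{5/2}$ Stokes regularity to align the powers of $h$.
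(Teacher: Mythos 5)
Your proof is correct and follows essentially the same route as the paper: the $H^1$ bound via a discrete lift of the $O(h^{r+1/2})$-small boundary trace together with inf-sup, the crucial $H^{-1}(\Sigma)$ bound traced back to Lemma \ref{lemma-B4-sh} and \eqref{Rhs-neg}, and the interior negative norms via an $H^{5/2}$ Dirichlet Stokes duality in which the boundary pairing $(g_h,\sigma(\bb\phi,\psi)\n)_\Sigma$ is the dominant term. The only cosmetic departures are that you estimate the boundary trace $g_h=\widetilde P\,(R_{sh}\u-R_h^S\u)(0)$ directly rather than triangulating through the exact $\u(0)$, and that you cast the $H^{-3/2}$ pressure estimate as a Stokes dual with prescribed divergence rather than invoking Bogovskii's map; both variants are sound and yield the paper's bounds.
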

\begin{proof}
	To prove the first inequality in Lemma \ref{lemma:B5}, we note that 
	\begin{equation}\label{6.6-lemma-tmp}
		 a_f (e^u_h, \bv_h) + (e_h^u, \bv_h) - b (e_h^p, \bv_h) = 0 \quad \forall \bv_h
		\in \mathring \X^r_h. 
	\end{equation}
	Let $\u_h = E_h (e_h^u |_{\Sigma})$, where $E_h$ is an extension operator as in \eqref{Eh-estimate}. Then $e_h^u - \u_h \in \mathring{\X}^r_h$ and $\| \u_h \|_{H^1} \leq Ch^{- 1 / 2} \| e_h^u \|_{\Sigma} \leq
	C h^{r + 1 / 2}$. This estimate of $\| \u_h \|_{H^1}$ and relation \eqref{6.6-lemma-tmp} imply that 
	\[ a_f (e^u_h - \u_h, \bv_h) + (e_h^u - \u_h, \bv_h) - b (e_h^p, \bv_h) \lesssim C
	h^{r + 1 / 2} \| \bv_h \|_{H^1} \quad \forall \bv_h \in \mathring{\X}^r_h . \]
	Now, choosing $\bv_h = e_h^u - \u_h$ in the inequality above, we obtain \eqref{B4-H1}
	\[ \| e_h^u \|_{H^1 (\Omega)}  + \| e_{h }^p \| \leq C h^{r + 1 / 2} . 
	 \]
	
	 We prove the second inequality in Lemma \ref{lemma:B5} now.
	 On the boundary $\Sigma$, relations \eqref{d3-2b} and \eqref{P-tilde-formula0} imply that $(R_h \u - \u) (0) = (R_{{sh}} \u - \u) (0) - \lambda (R_{{sh}} \u (0)) \n_h$. Since 
\[ \lambda (R_{{sh}}\u(0))= \frac{(R_{{sh}} \u (0),
	\n)_{\Sigma}}{\| \n_h \|^2_{\Sigma}} = \frac{(R_{{sh}}\u(0) - \u (0),
	\n)_{\Sigma}}{\| \n_h \|_{\Sigma}^2} \lesssim C \| R_{{sh}} \u (0) - \u (0)\|_{H^{-1}(\Sigma)}, 
	\]
it follows from \eqref{Rsh-u-neg} in Lemma \ref {lemma-B4-sh} that $\|(R_h\u-\u)(0)\|_{H^{-1}(\Sigma)}\leq Ch^{r+2}$.  Using  inequality $\|(R_h^D\u-\u)(0)\|_{H^{-1}(\Sigma)}\leq Ch^{r+2}$ in \eqref{RhD-neg-error} of Lemma \ref{Lemma-B3-RhD}, we obtain
\begin{equation}
	\|(R_h \u - R_h^D \u) (0) \|_{H^{- 1} (\Sigma)} \leq C h^{r + 2} . 
\end{equation}

	Next, we consider a dual problem: For given $\f\in H^{1 / 2}(\Omega)^d$, we construct $({\bb \phi}, q)$ to be the solution of
	\[ - \nabla \cdot \sigma ({\bb \phi}, q) + {\bb \phi} = \f ; \quad \nabla \cdot {\bb \phi} = 0
	; \quad {\bb \phi} |_{\Sigma} = 0 ; \; q \in L^2_0 (\Omega) . \]
	By the regularity assumptions in \eqref{reg-ass2}, the following estimate of $\phi$ and $q$ can be written down:  
	\[ \| {\bb \phi}\|_{H^{5 / 2}} + \| p \|_{H^{3 / 2}} \leq C \| \f \|_{H^{1 / 2}} . \]
	From equation \eqref{6.6-lemma-tmp} one can see that 
	\begin{align*}
		(e_h^u, \f) = & a_f (e_h^u, {\bb \phi} - I_h {\bb \phi}) + (e_h^u, {\bb \phi} - I_h {\bb \phi}) -(\sig({\bb \phi}, q) \n, e_h^u)_{\Sigma}\\
	& + b (e_h^p, I_h {\bb \phi}- {\bb \phi})\\
	 \lesssim & C h^{r + 2} \| \f \|_{H^{1 / 2}} + \| e_h^u \|_{H^{-
			1}_p (\Sigma)} \|\f\|_{H^{1/2}} \leq Ch^{r + 2} \| \f
	\|_{H^{1 / 2}}.	
	\end{align*}
	Therefore, the following result is proved: 
	\[ | (e_h^u, \f) | \leq  C h^{r + 2} \| \f \|_{H^{1 / 2}} ; \quad \|e_h^u \|_{H^{- 1 / 2}} \leq C h^{r + 2}. \]
	We move on to consider the dual problem for pressure: For given $f \in H^{3 / 2} (\Omega)$, since $e_h^p \in L^2_0 (\Omega)$ it follows that 
	\[ (e_h^p, f) = (e_h^p, f - \overline{f}) . \]
	Thus it suffices to assume that $\int_{\Omega} f = 0$. Then, using Bogovoski's
	map {(see details in \cite[Corollary
		1.5]{farwig} and \cite[Theorem 4]{girault-94})}, there exists $\bv \in H^{5 / 2} (\Omega)$
	such that
	\[ \nabla\cdot\bv = f, \quad  \| \bv \|_{H^{5 / 2}} \leq C \| f
	\|_{H^{3 / 2}}, \quad \bv |_{\Sigma} = 0  . \]
	From equation \eqref{6.6-lemma-tmp}, we can find that  
	\[ (e_h^p, f) = b (e_h^p, \bv) = b (e_h^p, \bv - I_h \bv) + a_f (e_h^u, I_h \bv - \bv)
	+ (e_h^u, I_h \bv - \bv) + a_f (e_h^u, \bv) + (e_{h }^u, \bv) . \]
	Thus, combining the known estimate $\|e_h^u\|_{H^{-1/2}}\leq Ch^{r+2}$ and $\|e_h^u\|_{H^1}+\|e_h^p\|\leq Ch^r$, we have 
	\[ | (e_h^p, f) | \leq C h^{r + 2} \|f\|_{H^{3 / 2}} 
		+ | a_f (e_h^u, \bv) | .\]
	Using integration by parts, we derive that	
	\[ a_f (e_h^u, \bv) = 2\mu(\DD e_h^u, \DD \bv) = - 2\mu (e_h^u, \nabla \cdot \DD \bv) + 2\mu(e_h^u, \DD \bv\cdot \n)_{\Sigma} \lesssim C h^{r + 2} \| f \|_{H^{3/2}} . \]
	Therefore, we have proved the following result: 
	\[ | (e_h^p, f) | \leq  C h^{r + 2} \| f \|_{H^{3 / 2}} ; \quad \|e_h^p \|_{H^{- 3 / 2}} \leq C h^{r + 2} .  \]
	This completes the proof of Lemma \ref{lemma:B5}.
\hfill\end{proof}

\begin{lemma}\label{super-approx-lem}
For the projection operators $R_{h}$ and $R_{sh}$ defined in \eqref{def-Rhe0} and \eqref{def-Rshe0}, respectively, the following estimate holds: 
	\[ \|R_{\text{sh}} \e (0) - R_h \e (0) \|_{H^1  (\Sigma)} \leq C h^{r + 1} . \]
\end{lemma}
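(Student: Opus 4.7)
\emph{Proof proposal.} My plan is to subtract the two defining equations, test with $\w_h := R_{sh}\e(0)-R_h\e(0)$, and then extract a factor of $h^{r+1}$ from the right-hand side by exploiting \emph{both} the standard $H^1$ bound and the negative-norm bounds on $e_h^u := (R_h\u-R_h^D\u)(0)$ and $e_h^p := (R_hp-R_h^Dp)(0)$ furnished by Lemma~\ref{lemma:B5}.

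First I would subtract \eqref{R_sh-eta} from \eqref{ritz-initial2}. Because $(R_h\u,R_hp)(0)$ and $(R_h^D\u,R_h^Dp)(0)$ are both Dirichlet Stokes--Ritz projections (differing only in the finite element boundary value), the difference satisfies
\begin{equation*}
a_s(\w_h,\bv_h)+(\w_h,\bv_h)_\Sigma = -a_f(e_h^u,E_h\bv_h)+b(e_h^p,E_h\bv_h)-(e_h^u,E_h\bv_h) \qquad \forall \bv_h\in \S_h^r,
\end{equation*}
and this right-hand side is invariant under replacing the finite element extension $E_h\bv_h$ by any $\X_h^r$-extension of $\bv_h$, because $e_h^u,e_h^p$ satisfy the homogeneous discrete Stokes orthogonality on $\mathring\X_h^r\times Q_h^{r-1}$. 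Taking $\bv_h=\w_h$ and invoking \eqref{Korn-ineq} gives $\|\w_h\|_{H^1(\Sigma)}^2\le C|\mathrm{RHS}|$, so everything reduces to bounding the right-hand side by $Ch^{r+1}\|\w_h\|_{H^1(\Sigma)}$.

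The key step is to play off the interpolation $I_h\bv$ of a smooth extension $\bv$ of $\w_h$ against the negative-norm estimates of Lemma~\ref{lemma:B5}. I would fix $\bv$ as a continuous $H^{5/2}$-regular extension of $\w_h$ with $\|\bv\|_{H^{5/2}(\Omega)}\le C\|\w_h\|_{H^2(\Sigma)}\le Ch^{-1}\|\w_h\|_{H^1(\Sigma)}$ (the last inequality using an inverse estimate on $\S_h^r$, the hypothesis $r\ge 2$, and extending with loss of half an order into the bulk); choose $E_h\w_h = I_h\bv$ so that $E_h\w_h|_\Sigma=I_h^S\w_h=\w_h$ and $\|\bv-I_h\bv\|_{H^1}\le Ch^{3/2}\|\bv\|_{H^{5/2}}$. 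Then I split
\begin{equation*}
\mathrm{RHS} = \bigl[-a_f(e_h^u,\bv)+b(e_h^p,\bv)-(e_h^u,\bv)\bigr] + \bigl[-a_f(e_h^u,I_h\bv-\bv)+b(e_h^p,I_h\bv-\bv)-(e_h^u,I_h\bv-\bv)\bigr].
\end{equation*}
The second bracket is controlled by $C(\|e_h^u\|_{H^1}+\|e_h^p\|)\|\bv-I_h\bv\|_{H^1}\le Ch^{r+1/2}\cdot h^{1/2}\|\w_h\|_{H^1(\Sigma)}=Ch^{r+1}\|\w_h\|_{H^1(\Sigma)}$ via the first part of Lemma~\ref{lemma:B5}. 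For the first bracket, I integrate by parts (legal since $\bv\in H^{5/2}\subset H^2$ is continuous and $e_h^u$ is globally $H^1$) to obtain
\begin{equation*}
-a_f(e_h^u,\bv) = 2\mu(e_h^u,\nabla\!\cdot\!\DD\bv) - 2\mu(e_h^u,\DD\bv\cdot\n)_\Sigma, \qquad b(e_h^p,\bv)=(e_h^p,\nabla\!\cdot\!\bv),
\end{equation*}
and then use $\|e_h^u\|_{H^{-1/2}}+\|e_h^u\|_{H^{-1}(\Sigma)}+\|e_h^p\|_{H^{-3/2}}\le Ch^{r+2}$ to bound each pairing by $Ch^{r+2}\|\bv\|_{H^{5/2}}\le Ch^{r+1}\|\w_h\|_{H^1(\Sigma)}$ (the $e_h^p$-pairing is safe because $e_h^p$ has mean zero, so $\nabla\!\cdot\!\bv$ can be tested modulo constants).

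The main obstacle will be the construction and control of the smooth extension $\bv$ of $\w_h$: a generic $\w_h\in \S_h^r$ is only globally $H^1$ on $\Sigma$, so its extension to $H^{5/2}(\Omega)$ cannot be taken literally, and one needs a Clement-type smoothing $\widetilde\w_h$ of $\w_h$ with $\|\widetilde\w_h-\w_h\|_{H^1(\Sigma)}$ negligible and $\|\widetilde\w_h\|_{H^2(\Sigma)}\le Ch^{-1}\|\w_h\|_{H^1(\Sigma)}$, followed by a standard extension. The smoothing discrepancy is then absorbed using the $H^1$-part of Lemma~\ref{lemma:B5} exactly as in the second bracket above. Everything else is bookkeeping, producing $\|\w_h\|_{H^1(\Sigma)}\le Ch^{r+1}$.
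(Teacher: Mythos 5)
Your approach is genuinely different from the paper's. Both start from the subtracted equation $a_s(\delta_h, \bv_h) + (\delta_h, \bv_h)_\Sigma + a_f(e_h^u, \bv_h) - b(e_h^p, \bv_h) + (e_h^u, \bv_h) = 0$ and both lean on the negative-norm estimates of Lemma \ref{lemma:B5}, but the paper extracts the extra power of $h$ via an Aubin--Nitsche duality on $\Sigma$: it first tests with $E_h\delta_h$ to get the crude $\|\delta_h\|_{H^1(\Sigma)}^2 \le Ch^r\|\delta_h\|_\Sigma$, then solves the dual problem $-\mathcal{L}_s\psi+\psi=\delta_h$, whose elliptic regularity \eqref{reg-ass3} gives $\psi\in H^2(\Sigma)$ automatically, extends $\psi$ to $H^{5/2}(\Omega)$, pairs against $e_h^u,e_h^p$ exactly as in your first bracket, arrives at $\|\delta_h\|_\Sigma\le Ch^{r+2}$, and finishes with the inverse inequality. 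You instead test with $\w_h$ itself and try to manufacture the smooth test function directly from $\w_h$. The duality route is cleaner because the auxiliary $H^{5/2}$-function comes for free from elliptic regularity of the dual solve, with no smoothing of a finite element function required.

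The gap in your version is the step $\|\w_h\|_{H^2(\Sigma)}\le Ch^{-1}\|\w_h\|_{H^1(\Sigma)}$: this is not an inverse estimate but a false statement, since the global $H^2(\Sigma)$-norm of a generic piecewise polynomial is infinite (kinks across element faces), and indeed assumption {\bf(A2)} only gives \eqref{inverse-Sh} for $0\le k\le s\le 1$. You do flag this, but your proposed fix is misstated: a mollification $\widetilde\w_h$ at scale $\sim h$ does \emph{not} make $\|\widetilde\w_h-\w_h\|_{H^1(\Sigma)}$ negligible -- for an $h$-oscillatory $\w_h$ that quantity is $O(\|\w_h\|_{H^1(\Sigma)})$. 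What is small is the $L^2(\Sigma)$-discrepancy, $\|\widetilde\w_h-\w_h\|_\Sigma\lesssim h\|\w_h\|_{H^1(\Sigma)}$, and the discrepancy bracket must be controlled via the discrete extension bound \eqref{Eh-estimate}, $\|E_h(\w_h-I_h^S\widetilde\w_h)\|_{H^1}\le Ch^{-1/2}\|\w_h-I_h^S\widetilde\w_h\|_\Sigma$, combined with the $H^1$-part of Lemma \ref{lemma:B5}, yielding $Ch^{r+1/2}\cdot h^{-1/2}\cdot h\|\w_h\|_{H^1(\Sigma)}=Ch^{r+1}\|\w_h\|_{H^1(\Sigma)}$. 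With that correction your argument does close, but it involves a three-way splitting of $E_h\w_h$ and a nontrivial mollification construction on the curved periodic manifold $\Sigma$, all to replace a two-line elliptic dual solve.
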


\begin{proof}
	By denoting $\delta_h : = R_h \e(0)-R_{\text{sh}} \e(0)$, $e^u_h :=
	(R_h \u - R_h^D \u) (0)$ and $e_h^p := (R_h p - R_h^D p) (0)$, we can write down the following equation according to the definitions of the two projection operators: 
	\[ a_s (_{} \delta_h, \bv_h)  + (\delta_h, \bv_h)_{\Sigma} + a_f (e_h^u, \bv_h) -
	b (e_h^p, \bv_h) + (e_h^u, \bv_h) = 0 \quad \forall \bv_h \in \X^r_h . \]
	Then, choosing $\bv_h = E_h \delta_h \in \X^r_h$ in the relation above and note that $\| \bv_h \|_{H^1} \leq C h^{- 1 / 2} \| \delta_h \|_{\Sigma}$, we derive that 
	\begin{equation}\label{sup-approx-lem-tmp1}
		\| \delta_h \|^2_{H^1 (\Sigma)} \leq C h^{- 1 / 2} \| \delta_h
		\|_{\Sigma} (\| e_h^u \|_{H^1} + \| e_h^p \|) \leq C h^r \|
		\delta_h \|_{\Sigma} . 
	\end{equation}
	Next, we consider the following dual problem: Let $\psi$ be the solution of 
	\[ - \mathcal{L}_s \psi + \psi = \delta_h \quad \text{$\psi$ has periodic boundary condition on $\Sigma$} . \]
	Then 
	\[ a_s (\psi, {\bb \xi})  + (\psi, {\bb \xi})_{\Sigma} = (\delta_h, {\bb \xi})_{\Sigma} \quad
	\forall {\bb \xi} \in \S \quad \text{ and } \|\psi\|_{H^2 (\Sigma)} \leq \|
	\delta_h \|_{\Sigma} .  \]
	We can extend $\psi$ to a function (still denoted by $\psi$) defined on
	$\Omega$ with periodic boundary condition and $\| \psi \|_{H^{5 / 2}} \leq C \|\psi \|_{H^2}$. Therefore
	\begin{align}\label{sup-approx-lem-tm2}
			{\| \delta_h \|^2_{\Sigma}}   = & a_s (\delta_h, \psi - I_h
		\psi)_{\Sigma} + (\delta_h, (\psi - I_h \psi))_{\Sigma}\nn\\
		& - a_f (e_h^u, I_h \psi) + b (e_h^p, I_h \psi) - (e_h^u, I_h \psi)\nn\\
		 \leq & C h  \| \delta_h \|_{H^1(\Sigma)} \| \psi\|_{H^2(\Sigma)} + C
		h^{3 / 2} (\| e_h^u \|_{H^1 } + \| e_h^p \|)\|\psi\|_{H^{5/2}}\nn\\
		  & + | a_f (e_h^u, \psi) - b (e_h^p, \psi) + (e_h^u, \psi) | . 
	\end{align}
	Using integrating by parts and negative-norm estimates in Lemma \ref{lemma:B5}, the following estimates can obtain: 
	\begin{align}\label{sup-approx-lem-tmp3}
		b (e_h^p, \psi) &\lesssim C \| \psi \|_{H^{5 / 2}} \|e_h^p
		\|_{H^{- 3 / 2}} \leq C h^{r + 2} \| \delta_h \|_{\Sigma} , \\
\label{sup-approx-lem-tmp4}
	a_f (e_h^u, \psi) + (e_h^u, \psi) &\lesssim C \| e_h^u \|_{H^{- 1 / 2}} \| \psi \|_{H^{5 / 2}} + C \| e_h^u \|_{H^{- 1}
		(\Sigma)} \| \psi \|_{H^{5 / 2}} \leq C h^{r + 2} \| \delta_h	\|_{\Sigma}	.
	\end{align}
	Therefore, combining the estimates in \eqref{sup-approx-lem-tmp1}--\eqref{sup-approx-lem-tmp4}, we obtain the following error estimate for $\delta_h$: 
	\[ \| \delta_h \|_{\Sigma}  \leq C h^{r / 2 + 1} \| \delta_h \|_{\Sigma}^{1
		/ 2} + C h^{r + 2} \Rightarrow \| \delta_h \|_{\Sigma} \leq C h^{r + 2} . 
	\]
	The inverse inequality implies $\| \delta_h \|_{H^1 (\Sigma)} \leq C
	h^{r + 1}$. This completes the proof of Lemma \ref{super-approx-lem}. 
\hfill\end{proof}

\renewcommand{\theequation}{C.\arabic{equation}}
\renewcommand{\thetheorem}{C.\arabic{theorem}}
\renewcommand{\thefigure}{C}
\renewcommand{\thesection}{C}
\setcounter{equation}{0}
\setcounter{theorem}{0}

\section*{Appendix C: Proof of \eqref{inf-sup-condition}}\label{AppendixC} 

In this appendix, we prove \eqref{inf-sup-condition} in the following lemma. 
\begin{lemma}\label{lemma-appendix-C}
Under assumptions {\bf (A1)}--{\bf (A4)} on the finite element spaces, the following type of  inf-sup condition holds (the $H^1(\Sigma)$-norm is involved on the right-hand side of the inequality){\rm:}
\begin{equation*}
	\| p_h \| {\leq C \sup_{0 \neq \bv_h \in \X^r_h}}  \frac{({\rm div} \bv_h,
		p_h)}{\| \bv_h \|_{H^1 }+\|\bv\|_{H^1(\Sigma)}} \quad
	\forall p_h \in Q^{r-1}_{h},
\end{equation*}
where $C>0$ is a constant independent of $p_h$ and the mesh size $h$. 
\end{lemma}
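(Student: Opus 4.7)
The plan is to reduce the new inf-sup condition on the full pressure space $Q_h^{r-1}$ to the given inf-sup condition (on the mean-zero space $Q^{r-1}_{h,0}$ with trace-zero test functions) by treating the constant component of $p_h$ separately. Specifically, since $\mathbb{R} \subseteq Q_h^{r-1}$, every $p_h \in Q_h^{r-1}$ admits the orthogonal splitting $p_h = p_h^0 + \bar p_h$, where $\bar p_h := |\Omega|^{-1} \int_\Omega p_h$ is constant and $p_h^0 \in Q_{h,0}^{r-1}$. Accordingly, it suffices to bound $\|p_h^0\|$ and $\|\bar p_h\|$ separately by the sup on the right-hand side of \eqref{inf-sup-condition}.

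First I would handle the mean-zero part. By assumption \textbf{(A4)} there exists $\bv_h^0 \in \mathring{\X}_h^r$ with $\|\bv_h^0\|_{H^1} \leq C\|p_h^0\|$ and $({\rm div}\,\bv_h^0, p_h^0) \geq C^{-1}\|p_h^0\|^2$. The crucial observation is that $\bv_h^0|_\Sigma = 0$, so on the one hand $\|\bv_h^0\|_{H^1(\Sigma)} = 0$, and on the other hand integration by parts gives
\begin{equation*}
({\rm div}\,\bv_h^0, \bar p_h) = \bar p_h \int_\Omega \nabla \cdot \bv_h^0 \,dx = \bar p_h \int_\Sigma \bv_h^0 \cdot \n \,ds = 0,
\end{equation*}
where the periodic boundary contributions on $\Sigma_l, \Sigma_r$ cancel. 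Consequently, $({\rm div}\,\bv_h^0, p_h^0) = ({\rm div}\,\bv_h^0, p_h)$, and
\begin{equation*}
C^{-1}\|p_h^0\|^2 \leq ({\rm div}\,\bv_h^0, p_h) \leq \|\bv_h^0\|_{H^1} \cdot \sup_{0\neq \bv_h \in \X_h^r} \frac{({\rm div}\,\bv_h, p_h)}{\|\bv_h\|_{H^1} + \|\bv_h\|_{H^1(\Sigma)}} \leq C\|p_h^0\|\cdot S,
\end{equation*}
where $S$ denotes the sup on the right-hand side of \eqref{inf-sup-condition}. This yields $\|p_h^0\| \leq CS$.

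Second, I would construct a single test function to detect the constant part. Fix once and for all a smooth periodic vector field $\bv^* \in H^1(\Omega)^d$ satisfying $\bv^*|_\Sigma = \n$ (such an extension exists because $\Sigma$ is smooth and $\n$ is smooth). Set $\bv_h^* := I_h^X \bv^* \in \X_h^r$. By assumption \textbf{(A3)}, $\|\bv_h^*\|_{H^1} + \|\bv_h^*\|_{H^1(\Sigma)} \leq C$ uniformly in $h$, and
\begin{equation*}
\int_\Omega \nabla \cdot \bv_h^* \,dx = \int_\Sigma \bv_h^* \cdot \n \,ds = (I_h^S \n, \n)_\Sigma = |\Sigma| + O(h^{r+1}) \geq \tfrac{1}{2}|\Sigma| > 0
\end{equation*}
for $h$ sufficiently small (the small-$h$ case is the interesting one; for $h$ bounded below one can absorb constants). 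Hence $|({\rm div}\,\bv_h^*, \bar p_h)| \geq \tfrac{1}{2}|\Sigma|\,|\bar p_h| \geq C\|\bar p_h\|$. Writing $({\rm div}\,\bv_h^*, \bar p_h) = ({\rm div}\,\bv_h^*, p_h) - ({\rm div}\,\bv_h^*, p_h^0)$ and using the uniform bound on $\|\bv_h^*\|_{H^1,+} := \|\bv_h^*\|_{H^1} + \|\bv_h^*\|_{H^1(\Sigma)}$, I obtain
\begin{equation*}
\|\bar p_h\| \leq C\bigl(S + \|p_h^0\|\bigr) \leq CS.
\end{equation*}
Combining the two estimates yields $\|p_h\| \leq \|p_h^0\| + \|\bar p_h\| \leq CS$, which is exactly \eqref{inf-sup-condition}.

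The main subtlety is the clean separation at the boundary: one needs the trace-zero property of $\bv_h^0$ to decouple the mean-zero inf-sup from the constant mode, and a boundary-supported flux from $\bv_h^*$ to recover that mode. The only mild technical point is ensuring $(I_h^S \n, \n)_\Sigma$ is bounded away from zero, which follows from the $O(h^{r+1})$ interpolation error and smoothness of $\n$; for $h$ above any fixed threshold, $Q_h^{r-1}$ is finite-dimensional and all norms are equivalent, so the inf-sup holds trivially by compactness.
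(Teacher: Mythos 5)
Your proof is correct and follows essentially the same approach as the paper: the same decomposition $p_h = p_h^0 + \bar p_h$, an interior test function from \textbf{(A4)} for the mean-zero part, and a boundary-flux test function whose trace approximates $\n$ for the constant part. The paper combines its two test functions into a single $\bv_h = \widetilde{\bv}_h + \epsilon \bv_h^e$ and balances constants via Young's inequality, whereas you exploit the key identity $(\mathrm{div}\,\bv_h^0,\bar p_h)=0$ (trace-zero plus periodic cancellation) to bound $\|p_h^0\|$ first and then $\|\bar p_h\|$ sequentially, which avoids the $\epsilon$-tuning and is arguably a cleaner presentation of the same argument; the only other cosmetic difference is that you build $\bv_h^*$ as $I_h^X\bv^*$ from a fixed smooth extension of $\n$ rather than the paper's $E_h(\n_h)$, and both yield the needed uniform $H^1$-bound and strictly positive total flux.
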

\begin{proof}
	Each $p_h\in Q_h^{r-1}$ can be decomposed into $p_h=\widetilde{p}_h+\bar{p}_h$, with $\widetilde{p}_h\in Q^{r-1}_{h,0}$ and $\bar{p}_h=\frac{1}{|\Omega|} \int_\Omega p_h {\rm d} x$. Since we have assumed that inf-sup condition \eqref{inf-sup-condition0} holds, there exists $\widetilde{\bv}_h\in \mathring{\X}_h^r$ such that 
	\begin{equation}\label{b-tilde-ph-vh} 
		\|\widetilde{\bv}_h\|_{H^1}\leq \|\widetilde{p}_h\| \quad \text{and}\quad b(\widetilde{p}_h,\widetilde{\bv}_h)\geq C_1\|\widetilde{p}_h\|^2
		\quad\mbox{for some constant $C_1>0$}.
	\end{equation}
	For the constant $\bar{p}_h\in\mathbb{R}$, we note that
	\begin{equation*}
		b(\bar{p}_h,\bv_h)=\bar{p}_h b(1,\bv_h)=\bar{p}_h(\bv_h,\n)_\Sigma.
	\end{equation*}
	Let $\bv_h^*\in \X_h^r$ be defined as $\bv_h^*=E_h(\n_h)$, where $\n_h$ is defined in \eqref{nh-def} and $E_h$ is the extension operator defined in item 4 of Remark \ref{remark-FE-assm}, i.e., $\bv_h^*=I_h^X \bv\in \X^r_h$ with $\bv\in H^1(\Omega)^d$ being an extension of $\n_h$ such that $\bv|_\Sigma=\n_h$. By the definition of $\bv_h^*$, we have 
	\begin{align*}
		\|\bv_h^*\|_{H^1} &= \|I_h^X \bv\|_{H^1} \leq C\|\bv\|_{H^1}\leq C\|\n_h\|_{H^{1/2}(\Sigma)}\leq C \\
		\|\bv_h^*\|_{H^1(\Sigma)} &=\|\n_h\|_{H^1(\Sigma)}\leq C.
	\end{align*}
Moreover, the following relation holds: 
	\begin{equation*}
		b(1,\bv_h^*)=(\bv_h^*,\n)_\Sigma=(\n_h,\n)_\Sigma=\|\n_h\|_\Sigma^2\geq C>0.
	\end{equation*}
	Therefore, the function $\bv_h^1:=\bar{p}_h\bv_h^*$ has the following property: 
	\begin{equation*}
		\|\bv_h^1\|_{H^1}+\|\bv_h^1\|_{H^1(\Sigma)}\leq C|\bar{p}_h|\leq C_0\|\bar{p}_h\|
		\quad\mbox{for some constant $C_0>0$} . 
	\end{equation*}
We can re-scale $\bv_h^1$ to $\bv_h^e=\frac{1}{C_0}\bv_h^1$ so that the following inequalities hold for some constant $C_2>0$: 
	\begin{equation}\label{vhe-H1}
		\|\bv_h^e\|_{H^1}+\|\bv_h^e\|_{H^1(\Sigma)}\leq \|\bar{p}_h\|\quad\text{and}\quad b(\bar{p}_h,\bv_h^e)= \frac{|\bar{p}_h|^2}{C_0}b(1,\bv_h^*)\geq C_2\|\bar{p}_h\|^2 . 
	\end{equation}
By considering $\bv_h=\widetilde{\bv}_h+\epsilon\bv_h^e$, with a parameter $\epsilon>0$ to be determined later, and using the relation $b(\bar{p}_h,\widetilde{\bv}_h)=\bar{p}_h( \widetilde{\bv}_h ,\n_h)_\Sigma = 0$, we have   
	\begin{align*}
		b(p_h,\bv_h)&=b(\widetilde{p}_h+\bar{p}_h,\widetilde{\bv}_h+\epsilon\bv_h^e)\\
		&=b(\widetilde{p}_h,\widetilde{\bv}_h)+\epsilon b(\widetilde{p}_h,\bv_h^e)+\epsilon b(\bar{p}_h,\bv_h^e)\\
		&\geq C_1\|\widetilde{p}_h\|^2+\epsilon b(\widetilde{p}_h,\bv_h^e) +\epsilon C_2\|\bar{p}_h\|^2
		&&\mbox{(here \eqref{b-tilde-ph-vh} and \eqref{vhe-H1} are used)}\\
		&\geq C_1\|\widetilde{p}_h\|^2-C\epsilon\|\widetilde{p}_h\|\|\bar{p}_h\| +\epsilon C_2\|\bar{p}_h\|^2 
		&&\mbox{(the first inequality in \eqref{vhe-H1} is used)} . 
	\end{align*}
By using Young's inequality, we can reduce the last inequality to the following one: 
	\begin{equation*}
		b(p_h,\bv_h)\geq C_1\|\widetilde{p}_h\|^2+\epsilon C_2\|\bar{p}_h\|^2-\left(\frac{C_1}{2}\|\widetilde{p}_h\|^2+\frac{C^2\epsilon^2}{2C_1}\|\bar{p}_h\|^2\right) . 
	\end{equation*}
	Then, choosing $\epsilon=\frac{C_1C_2}{C^2}$, we derive that 
	\begin{equation}\label{b-ph-vh}
		b(p_h,\bv_h)\geq \frac{C_1}{2}\|\widetilde{p}_h\|^2+\frac{C_1C_2^2}{2C^2}\|\bar{p}_h\|^2\geq C_3\|p_h\|^2 \quad\mbox{with $C_3:=\min \{\frac{C_1}{2},\frac{C_1C_2^2}{2C^2}\}$}. 
	\end{equation}
Since $\bv_h=\widetilde{\bv}_h+\epsilon\bv_h^e$ with $\widetilde{\bv}_h=0$ on $\Sigma$, it follows from the triangle inequality and \eqref{b-tilde-ph-vh}--\eqref{vhe-H1} that 
	\begin{align}\label{vh-H1-H1Sigma}
		\|\bv_h\|_{H^1}+\|\bv_h\|_{H^1(\Sigma)}&\leq \|\widetilde{\bv}_h\|_{H^1}+\epsilon(\|\bv_h^e\|_{H^1}+\|\bv_h^e\|_{H^1(\Sigma)})\\
		&\leq \|\widetilde{p}_h\|+\frac{C_1C_2}{C^2}\|\bar{p}_h\|\leq \Big(1+\frac{C_1C_2}{C^2}\Big)\|p_h\|. 
	\end{align}
 	Therefore, \eqref{b-ph-vh} and \eqref{vh-H1-H1Sigma} imply that 
 	\begin{equation*}
	\frac{b(p_h,\bv_h)}{\|\bv_h\|_{H^1}+\|\bv_h\|_{H^1(\Sigma)}}
	\ge \frac{C_3\|p_h\|^2}{\big(1+\frac{C_1C_2}{C^2}\big)\|p_h\|} 
	=\frac{C_3\|p_h\|}{\big(1+\frac{C_1C_2}{C^2}\big)} .
 	\end{equation*}
This proves that 
$$
\|p_h\|\leq \frac{1}{C_3}\Big(1+\frac{C_1C_2}{C^2}\Big)\frac{b(p_h,\bv_h)}{\|\bv_h\|_{H^1}+\|\bv_h\|_{H^1(\Sigma)}} . 
$$
and therefore completes the proof of \eqref{inf-sup-condition}.
\hfill\end{proof}

\end{document}